\theoremstyle{plain}\theoremstyle{plain}
\newtheorem{theorem}{Theorem}
\newtheorem{definition}{Definition}
\newtheorem{corollary}{Corollary}
\newtheorem{lemma}{Lemma}
\newtheorem{remark}{Remark}
\newcommand{\field}[1]{\mathbb{#1}}
\newcommand{\C}{\field{C}}
\newcommand{\R}{\field{R}}
\newcommand{\N}{\field{N}}
\newcommand{\Z}{\field{Z}}
\numberwithin{equation}{section}
\numberwithin{lemma}{section}
\numberwithin{theorem}{section}
\numberwithin{corollary}{section}
\numberwithin{remark}{section}
\numberwithin{definition}{section}
\newcommand{\at}{\makeatletter@\makeatother}
\begin{document}
\begin{frontmatter}
\title{Random walk to $\phi^4$ and back}
\runtitle{Random walk $\leftrightarrow \phi^4$ }

\begin{aug}
\author{\fnms{Daniel} \snm{H\"of}
\ead[label=e1]{dhoef@gmx.net}}




\address{
2. K\i{}s\i{}m Mah. Ay\c ci\c cek Sok. 17 D: 10\\
Bah\c ce\c sehir\\
TR-34488 \.Istanbul \\
Republic of Turkey\\
\printead{e1}\\
}

\end{aug}

\begin{abstract}
In this paper we establish an exact relationship between the asymptotic probability distributions $\nu_0$ and $\nu_2$ of the multiple point range of the planar random walk and the proper functions $\Gamma^{[0]}$ and $\Gamma^{[2]}$ respectively of the planar, complex $\phi^4$-theory, setting the number of components $m=0$: The characteristic functions $\Phi_0$ and $\Phi_2$ of $\nu_0$ and $\nu_2$ have simple integral transforms $\zeta^{[0]}$ and $\zeta^{[2]}$ respectively which turn out to be the extensions of the proper functions $\Gamma^{[0]}$ and $\Gamma^{[2]}$ onto a Riemann surface (with infinitely many sheets) in the coupling constant $g$ and are well defined mathematically. $\zeta^{[0]}$ and $\zeta^{[2]}$ restricted to a specific sheet have a (sectorwise) uniform asymptotic expansion in $g=0$.  The standard perturbation series of $\Gamma^{[0]}$ and $\Gamma^{[2]}$ in $g$ have expansion coefficients $\Gamma^{[0],pt}_r$ and $\Gamma^{[2],pt}_r$ which are polynomials in $m$. Order by order the lowest nontrivial polynomial coefficient in $m$: $\Gamma^{[0],pt}_{r,1} = \zeta^{[0]}_{r}$ and $\Gamma^{[2],pt}_{r,0} = \zeta^{[2]}_{r}$ where $\zeta^{[0]}_{r}$ and $\zeta^{[2]}_{r}$ are the coefficients of the asymptotic series of $\zeta^{[0]}$ and $\zeta^{[2]}$ around $g=0$ respectively. $\Phi_0$ and $\Phi_2$ turn out to be modified Borel type summations of those series. \\ 
As an application we derive the rising edge behaviour of $\nu_0$ and $\nu_2$ from the large order estimates of Lipatov \citep{lipatov}. It turns out to be of the form of a Gamma distribution with parameters known numerically. 
\end{abstract}

\begin{keyword}[class=MSC]
\kwd[Primary ]{60J65}
\kwd{60J10}
\kwd[; secondary ]{60E10}
\kwd{60B12}
\end{keyword}

\begin{keyword}
\kwd{ Multiple point range of a random walk, $\phi^4$-theory, quantum field theory, proper functions, Intersection Local Time, Range of a random walk, multiple points, Brownian motion }
\end{keyword}

\end{frontmatter}

\noindent

\noindent
\section{Introduction}\label{intro}
The distribution of the range and the multiple point range of the planar random walk for large length has been an important subject of mathematical research over the last 70 years \citep{Tek}. The first moment of the range for non restricted walks has been calculated by Flatto in \citep{Fla}, the second moment by Jain and Pruitt in \cite{jain} the third moment for the multiple point range in \citep[eq. 1.2]{dhoefm} and the fourth moment numerically in \citep[eq. 1.3]{dhoefm}. The first three moments of the multiple point range for the closed random walk have also been calculated in \citep[eq. 1.7, 1.8]{dhoefm}. The leading behaviour (for large length) of the distribution of the appropriately renormalized and rescaled range of a non restricted planar random walk and that of the renormalized intersection local time of the Brownian motion in two dimensions are proportional to each other with a negative real constant of proportionality as established by Le Gall \citep{legall}. This has been extended to a comparable relationship for the multiple point range by Hamana \citep{hamana_ann}. This fundamental distribution has been studied by Bass and Chen extensively \cite{bass_chen} and the tails of the distribution have been calculated to decrease exponentially and rise double exponentially. The rate of decrease has been linked to an infimum of a Gagliardo-Nirenberg inequality which is tightly related to the infimum of the Lagrangian of a planar $\phi^4$-theory \cite{bass_chen}.\\ 
A while ago I had established an exact relationship between the moments of the distribution of the multiple point range for the nonrestricted and the closed planar random walk in the limit of large length with certain integrals of the perturbation series of the $\phi^4$-theory. This perturbation series has been conjectured to be an asymptotic series which should be Borel summable to give it a more precise meaning beyond perturbation theory namely as an integral transform of the Borel sums. 
This idea has first been established long ago by Bender and Wu for the onedimensional case \cite{bender_wu}. It has also been given substantial support by the calculations of Lipatov \citep{lipatov} about large orders of perturbation theory. Yet the methodology has not been established in a rigorous mathematical sense. Our newly established relationship to well defined random walk distributions now points to a natural resumming of the perturbation theory as the characteristic function of the distributions mentioned before which leads to its own type of integral transform.\\
In this article we elaborate this idea to the  following exact relationship:\\
\begin{theorem}\label{th_main}
For $k \in \Z$, real $b > 0$ and $g \in \C\setminus\{0\}$ we define the family of integral kernels 
\begin{equation}\label{eq_dtheta}
\vartheta_k(g,b) := \exp\left(-b \cdot \left(k + z(g) + i\cdot \frac{\ln(b)}{2\pi}  \right)\right)
\end{equation}
with 
\begin{equation}
z(g) := -\frac{1}{4} +  \frac{i}{g} - \frac{i \cdot Log(g)}{2\pi}
\end{equation}
where $Log$ is the principal branch of the logarithm $-\pi < \Im(Log(g)) \leq \pi$.
There exist random variables $\nu_0$ and $\nu_2$ with Borel measures on $\R$ the moments of which equal the leading order for large length of those of the rescaled and renormalized multiple point range of the closed ($\nu_0$) and the  non restricted ($\nu_2$) planar random walk  (for the moments see \citep[eq. 8.56, 8.57]{dhoefc}, \citep[Theorem 1.1]{dhoefc} and \citep[Theorem 3.5]{hamana_ann}). Let us define the open set $R_k = \{g \in \C\setminus\{0\}: \Re(z(g)) + k > 0\}$  For $g \in R_k$  we can then define the branches of the holomorphic functions
\begin{equation}
L_0(k,g) := \int_0^{\infty} db \cdot E\left(e^{i\cdot b \cdot \nu_0}\right) \cdot e^{-\frac{i \cdot \gamma \cdot b}{2\pi}} \cdot \vartheta_k(g,b)
\end{equation}
and 
\begin{equation}
L_2(k,g) := \int_0^{\infty} db\cdot b  \cdot E\left(e^{i\cdot b \cdot \nu_2}\right) \cdot e^{\frac{i \cdot (1 - \gamma) \cdot b}{2\pi}} \cdot \vartheta_k(g,b)
\end{equation}
($\gamma$ the Euler constant and $E(.)$ the expectation value).
Deriving from them the branches of the holomorphic function 
\begin{equation}\label{eq_zeta0d}
\zeta^{[0]}(k,g) := -\frac{i\cdot g}{4\pi}LS_g\left(\rho(g)\cdot LS_g\left(\rho(g) L_0(k,g) + \frac{1}{g}\right)\right)
\end{equation}
where $LS_g$ denotes taking the local antiderivative and 
\begin{equation}
\rho(g) := \frac{d}{d g} z(g) = -\frac{i}{g^2} \cdot ( 1 + \frac{g}{2\pi})
\end{equation}
and (excluding the discrete set of points of $R_k$ where $LS_g\left(\rho(g)L_2(k,g)\right) = 0$)
\begin{equation}\label{eq_zeta2d}
\zeta^{[2]}(k,g) := \frac{ig}{LS_g\left(\rho(g)L_2(k,g)\right)} 
\end{equation}
The antiderivatives can be made unique by the constraints $\zeta^{[0]}(0,g) \sim 0$ and $\zeta^{[0]'}(0,g) \sim 0$ and $\zeta^{[2]}(0,g) \sim 1$ for $g \sim 0$, such a choice is possible.
Then for a given natural number $M$ the functions $\zeta^{[0]}(0,g)$ and $\zeta^{[2]}(0,g)$ have asymptotic expansions uniform on each sector $\left| \arg(g) - \frac{\pi}{2} \right| < \frac{\pi}{2} - \xi$ with $\xi \in \left(0,\frac{\pi}{2}\right)$
\begin{equation}
\zeta^{[i]}(0,g) = \sum_{r=0}^M \zeta^{[i]}_r \cdot (-g)^r + O(g^{M + 1})
\end{equation}
for $i = 0,2$ around $g=0$ with $\zeta^{[0]}_0 = \zeta^{[0]}_1 = \zeta^{[2]}_1 = 0$ and $\zeta^{[2]}_0 = 1$.\\ Let us on the other hand  consider the standard perturbation theory of a two dimensional Euclidian complex $m$-component quantum field theory with Lagrangian 
\begin{equation}\label{eq_lagra}
\mathcal{L}(x) = \partial_{\mu}\Phi^{\dagger}(x)\partial^{\mu}\Phi(x) + \lambda_1 \cdot \Phi(x)^{\dagger}\Phi(x) + \lambda_2 \cdot \left(\Phi^{\dagger}(x)\Phi(x)\right)^{2} 
\end{equation}
specifically the tadpole free perturbation expansions of its free energy $\Gamma^{[0],pt}$, as the logarithm of the partition function (minus the free energy contribution of the interaction free theory).  
\begin{equation}\label{eq_g0}
\Gamma^{[0],pt}(\lambda) := \sum_{r=2}^{\infty} \Gamma_{r}^{[0],pt}(\lambda)
\end{equation}
(where $\lambda = (\lambda_1,\lambda_2)$)
as the formal sum of the so called one particle irreducible vacuum diagrams with r vertices.\\
Next we consider the proper two point function as defined in \cite[eq. 4.33, 4.34]{phi4}
\begin{equation}\label{eq_g1}
\Gamma^{[2]}(\lambda,P = 0)_{\alpha,\beta} := \delta_{\alpha,\beta}  \cdot \left( \sum_{r=0}^{\infty} \Gamma_{r}^{[2],pt}(\lambda) \right)
\end{equation} 
(the external momentum $P$ is assumed to be $0$) where the sum again is a formal sum.\\ 
The coefficients $\Gamma_{r}^{[i],pt}$ are polynomials in $m$ of degree $\leq r$ 
\begin{equation}
 \Gamma_{r}^{[0],pt}(\lambda) = \lambda_1 \cdot \left(-\frac{\lambda_2}{\lambda_1}\right)^r \cdot \left(\sum_{q=0}^{r} \Gamma_{r,q}^{[0],pt}\cdot m^q  \right)
\end{equation}
and
\begin{equation}
 \Gamma_{r}^{[2],pt}(\lambda) =  \lambda_1 \cdot \left(-\frac{\lambda_2}{\lambda_1}\right)^r  \cdot \left(\sum_{q=0}^{r} \Gamma_{r,q}^{[2],pt}\cdot m^q \right)
\end{equation}
and $\Gamma_{r,q}^{[i],pt}$ (in two dimensions) are well defined mathematically in terms of absolute converging (Feynmann) integrals. Additionaly vacuum diagrams have   $\Gamma_{r,0}^{[0],pt} = 0 $ for any $r \geq 0$.\\
Then for all $r \geq 0$  
\begin{equation}\label{eq_gamma0}
\zeta^{[0]}_r = \Gamma_{r,1}^{[0],pt}
\end{equation}
and for all $r \geq 0$
\begin{equation}\label{eq_gamma2}
\zeta^{[2]}_r = \Gamma_{r,0}^{[2],pt}
\end{equation}
This means that the mathematically well defined holomorphic/meromorphic function branches $\zeta^{[0]}(k,g)$ and $\zeta^{[2]}(k,g)$ with counter clockwise continuity are natural extensions of the perturbation theory of the corresponding proper functions of this quantum field theory for $m=0$ and that standard perturbation expansion in this case is an asymptotic expansion uniform on appropriate sectors in $\C$.  
\end{theorem}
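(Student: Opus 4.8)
The plan is to prove the two identities \eqref{eq_gamma0} and \eqref{eq_gamma2} by computing each side as an explicit integral and matching them order by order in $r$, obtaining the existence of $\nu_0,\nu_2$ and the sectorwise asymptotic expansion along the way. The argument splits into three layers: turning the probabilistic data into integrals through the known moments of $\nu_0$ and $\nu_2$; reading the asymptotic coefficients $\zeta^{[i]}_r$ off the transforms $L_i$; and identifying the outcome with the selected $m$-graded piece of the $\phi^4$ perturbation series.

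First I would start from the moments of $\nu_0$ and $\nu_2$ cited from \citep{dhoefc} and \citep{hamana_ann}, which encode the combinatorics of the renormalized intersection local time of a single walk. Writing the characteristic functions $E(e^{ib\nu_i})$ and inserting them into $L_i(k,g)$, the point is that the kernel $\vartheta_k(g,b)$ --- whose exponent carries the coupling through $z(g)=-\frac14+\frac{i}{g}-\frac{i\,Log(g)}{2\pi}$ together with the $b\ln b$ term --- is a modified Borel--Laplace kernel. Since $b\ln b\to0$ as $b\to0$, the $b$-integral against the moment expansion is governed by the large parameter $\Re(i/g)\to\infty$, so a Watson's-lemma-type analysis produces Gamma-function factors and powers of $g$, feeding each moment into a definite order $r$; the phase factors $e^{-i\gamma b/(2\pi)}$ and $e^{i(1-\gamma)b/(2\pi)}$ absorb the Euler-constant contributions of those Gamma factors, and the $\ln b$ term reproduces the logarithmic structure of the two-dimensional theory. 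This yields the uniform asymptotic expansion on the sector $|\arg(g)-\tfrac{\pi}{2}|<\tfrac{\pi}{2}-\xi$, with each $\zeta^{[i]}_r$ an explicit absolutely convergent integral.

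Next I would check that the operations in \eqref{eq_zeta0d} and \eqref{eq_zeta2d} --- the local antiderivatives $LS_g$, the weight $\rho(g)=z'(g)$, the additive $\frac1g$, and the reciprocal in $\zeta^{[2]}$ --- implement on the transform side exactly the passage from raw correlators to proper (one-particle-irreducible) functions. The reciprocal in $\zeta^{[2]}$ is the inversion that turns the full two-point function $LS_g(\rho(g)L_2)$ into the proper (amputated) one, while the nested $LS_g$ with the $\rho$ weights and the $\frac1g$ shift integrate the vacuum data up to the free energy; matching these to the combinatorial definitions of $\Gamma^{[0],pt}$ and $\Gamma^{[2],pt}$ is then a bookkeeping of symmetry factors and signs. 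The decisive structural input is the $m$-grading: in the complex $m$-component theory each closed index loop carries a factor $m$, so $\Gamma^{[0],pt}_{r,1}$ selects the single-index-loop vacuum diagrams and $\Gamma^{[2],pt}_{r,0}$ selects the diagrams built from a single open line with no extra index loop. These are exactly the self-intersecting single-walk configurations counted by the intersection local time, which is the $m=0$ de Gennes mechanism underlying the whole correspondence.

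The hard part will be the explicit identification asserted in \eqref{eq_gamma0} and \eqref{eq_gamma2}: that the convergent integral for $\zeta^{[i]}_r$ equals the sum of the selected Feynman integrals with their correct weights. This requires, first, matching the renormalization conventions on both sides --- the tadpole-free prescription, the specific shifts $-\tfrac14$ and by $\gamma$, and the logarithmic counterterm --- so that the finite parts, and not merely the divergent skeleton, agree; and second, controlling the asymptotic (non-convergent) nature of the series, where Lipatov's large-order estimates \citep{lipatov} ensure that the modified Borel summation genuinely reconstructs the $\Phi_i$ and that the termwise-integrated series is the true asymptotic expansion rather than an artifact of interchanging sum and integral. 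I expect the renormalization matching to be the most delicate point, since that is where the probabilistic normalization of the rescaled range must be reconciled diagram by diagram with the field-theoretic subtraction scheme.
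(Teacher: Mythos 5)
Your skeleton matches the paper's actual proof: existence of $\nu_0,\nu_2$ via positive semidefinite Hankel matrices and the Hamburger moment problem, a Watson-type lemma for the modified kernel (the paper's Lemma \ref{le_momen}, supplemented by Lemma \ref{le_gam} for the factors $b^{r-1}/\Gamma(r-\tfrac{ib}{2\pi})$), the sectorwise uniform expansion of $L_0,L_2$ in $s=i/g$, and identification of the coefficients with the $m$-graded perturbation data. However, your third layer contains a genuine error: you make the validity of the asymptotic expansion depend on Lipatov's large-order estimates, claiming they are needed so that ``the termwise-integrated series is the true asymptotic expansion rather than an artifact of interchanging sum and integral.'' No such input is needed, and the paper uses none: an asymptotic expansion is established order by order from \emph{finite} truncations. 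The remainder bound in Lemma \ref{le_momen} requires only finitely many moments of $\nu_i$ (finiteness of $E\left(\left|\nu_i\right|^j\right)$ gives the Taylor-type bound on $\Phi_i$ near $t=0$) together with boundedness of the characteristic function and the uniform kernel decay \eqref{eq_fuun}; the divergent series is never summed, so no interchange of limits arises. Lipatov asymptotics enters the paper only in the separate and explicitly conditional Theorem 6.1 on edge behaviour and uniqueness of the measures; importing it into the proof of Theorem \ref{th_main} would turn an unconditional theorem into a conditional one.

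The second gap concerns \eqref{eq_gamma2}, where your ``bookkeeping of symmetry factors and signs'' undersells what is actually required. The $m$-grading observation (one open chain, no closed index loop at order $m^0$) does not by itself connect $\Gamma^{[2],pt}_{r,0}$ to the transform of $\Phi_2$, because the random-walk input consists of two distinct moment families: closed-walk moments, which feed $\Gamma^{[0],pt}_{r,1}$ through \eqref{eq_tgamma0}, and unrestricted-walk moments, which feed only the \emph{closable} part $G^{[C]}$ of the two-point function through the $Div$/$Cl$ bijection, i.e.\ through a $\lambda_1$-derivative of vacuum data (Lemma \ref{le_divsfc} and \eqref{eq_tdelta2}). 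The unclosable graphs $GFU(r,1,2)$ are invisible to both families directly; the paper disposes of them by the ladder decomposition (Lemmas \ref{le_ladder}, \ref{le_pea}, \ref{le_peb}, \ref{le_grab}), the cutvertex weight factorization \eqref{eq_weipr}, and the resummation \eqref{eq_sigmau}, which re-expresses $\Sigma^{[U]}$ in terms of $\Gamma^{[0],pt}$ data with a geometric ladder prefactor; only after combining $G^{[C]}+G^{[U]}$ and inverting via the 1PI relation \eqref{eq_gr2ga} does $\zeta^{[2]}_r=\Gamma^{[2],pt}_{r,0}$ follow. Your proposal correctly identifies the reciprocal in \eqref{eq_zeta2d} with this inversion, but without an argument replacing the closable/unclosable split and the ladder resummation the identity for the two-point function cannot be closed.
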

To prove this theorem the paper is organized in the following way: In the second section we define appropriate classes of graphs, adjacency matrices and permuations. In the third section we then formulate perturbation theory of an $m$ component general complex Euclidian field theory in these terms. The expansion coefficients $\Gamma_{r}^{[0],pt}(\lambda)$ and $\Gamma_{r}^{[2],pt}(\lambda)$ are polynomials in $m$. For $\Gamma_{r}^{[0],pt}(\lambda)$ the $q^{th}$ coefficients of this polynomial is related to the number of configurations of $q$ edge disjoint Eulerian paths on decompositions of Eulerian graphs and can be related to the moments of the multiple point range of the closed planar random walk for $q=1$. The same procedure is also applied to the proper two point function $\Gamma_{r}^{[2],pt}(\lambda)$. Here the $q^{th}$ coefficient is related to the number of configurations with one semieulerian path and $q$ Eulerian paths on decompositions of semieulerian graphs. Different parts of the so called self energy ordered according to graph types are analysed and allow a connection to the moments of the multiple point range of the unrestricted and the closed planar random walk for $q=0$. In the fourth section we derive basic formulas for the integral transformation with the integral kernels $\vartheta_0(g,b)$ from \eqref{eq_dtheta} with the goal to establish the uniform asymptotic expansion needed. Combining these results with the former formulas (\citep[eq. 8.56, 8.57]{dhoefc}, \citep[Theorem 1.1]{dhoefc}) for moments of the distribution of the multiple point range of the planar random walk the above theorem is easily seen to be true in the fifth section. From this relationship we can then in the sixth section connect the rising edge behaviour of the distributions to the high order behaviour of the planar $\phi^4$ theory as given by Litpatov \citep{lipatov} and Suslov \citep{Suslov_1997}. Assuming the validity of their calculations, we find the rising edge behaviour to be like that of a Gamma distribution. 

\section{The foundations: graphs, matrices and permutations}

In this paper we will construct a bridge between standard perturbation theory and multiple points of random walks. In standard perturbation theory the objects studied typically are sums of so called Feynman integrals. The summations are taken over the set of all graphs of a specific kind where each graph contribution is multiplied with a so called weight factor which depends on the number $m$ of components of the underlying field. To construct the bridge we will on a first level define sets of relevant graphs. On a second level we will refine graphs by numbering their vertices and looking at their adjacency matrices. It is on this level that results about random walks were formulated in an earlier article \citep{dhoefc}. On a third level we will refine graphs by numbering both vertices and edges and look at the underlying structure of permutations. It is on this third level that we will make the obvious connection to perturbation theory of quantum field theory based on the Wick/Isserli theorem. The other two levels still are important for the derivation of the formulas we need. \\
\subsection{Graphs}
As we will deal with complex quantum field theories we will in this article always work with directed loopfree finite multigraphs $G=(E,V,J)$ where $E$ is the finite set of edges and $V$ is the finite set of vertices. The mapping $J$ assigns two verices to each $e \in E$: the starting vertex $st(e) := J(e)_1 \in V$  and the ending vertex $end(e) := J(e)_2$. Loopfreeness means $J(e)_1 \neq J(e)_2$ for each $e \in E$.  For $v \in V$ we denote by $outd(v)$ the number of edges $e$ such that $J(e)_1 = v$ and by $ind(v)$ the number of edges $e$ such that $J(e)_2 = v$.
\begin{definition}
For nonzero $q \in \N$ let us denote the set of all directed loopfree finite multigraphs with $q$ vertices by $Gr(q)$. 
\end{definition}
\begin{definition}
A trail $T$ in a graph $G \in Gr(q)$ denotes a sequence of edges $(e_1,\ldots,e_n)$ such that $e_i \in E$ for $i=1,\ldots,n$ and $end(e_i) = st(e_{i+1})$ for $i=1,\ldots,n-1$. The vertex $st(e_1)$ is called the starting vertex $st(T)$ of $T$ and $end(e_n)$ the ending vertex $end(T)$. $T$ is said to go over the vertex $v$ iff $v = end(e_i)$ for an $1 \leq i < n$. A graph $G \in Gr(q)$ is called connected iff for any two vertices $v_1, v_2 \in V$ with $v_1 \neq v_2$ there is a trail $T$ such that $v_1 = st(T)$ and $v_2 = end(T)$. We denote the subset of graphs in $Gr(q)$ which are connected by $GC(q)$. 
\end{definition}
\begin{definition} Edges $e_1,e_2$ of a a graph $G$ are said to be parallel iff $st(e_1) = st(e_2)$ and $end(e_1) = end(e_2)$. We denote the set of equivalence classes of edges under parallelism by $\left[E(G)\right]$. 
\end{definition}
\begin{definition}
A trail $T$ is called closed iff $st(T) = end(T)$. $T$ is called Eulerian iff $T$ contains all edges of $G$ and is closed. $T$ is called semieulerian iff $T$ is not closed but contains all edges of $G$. A graph $G \in Gr(q)$ is called Eulerian if it has a Eulerian trail, it is called semieulerian if it has a semieulerian trail.  
For $G \in GC(q)$ we denote by $EC(G)$ the so called edge connectivity, i.e. the minimal number of edges whose subtraction leaves the remaining graph unconnected. For $G \in GC(q)$ we call a vertex $v$ a cutvertex iff there are vertices $v_1 \neq v$ and $v_2 \neq v$ such that any trail from $v_1$ to $v_2$ must go over $v$. 
\end{definition}
\begin{definition}
Let $G \in Gr(q)$ be a graph. A vertex $v \in V$ is called balanced if $ind(v) = outd(v)$. The subset of graphs in $GC(q)$ which have only balanced vertices is denoted by $GBa(q)$. For $w \geq 1$ we denote by $GBalp(q,w)$ the set of graphs in $GBa(q)$ which have $ind(v) \geq w$ for all vertices $v \in V$.  
\end{definition}
We will in the sequel define sets of graphs corresponding to the $r^{th}$ contribution of the so called $p$ point functions of quantum field theory: If nothing else is stated we will assume that the integers $r>0$ and $p \geq 0$: 
\begin{definition}
Let us consider graphs $G \in GC(r + 2\cdot p)$ such that $V = V_{o,1} \cup V_{o,2} \cup V_i$ with the outer outgoing vertices
 $v \in V_{o,1} \Rightarrow ind(v) = 0, outd(v) = 1$ and the outer ingoing vertices $\tilde{v} \in V_{o,2}  \Rightarrow ind(\tilde{v}) = 1, outd(\tilde{v}) = 0$ and the inner vertices $\bar{v} \in V_i \Rightarrow ind(\bar{v}) = outd(\bar{v})$ (i.e. they are balanced). Let in addition $\#V_{o,1} = \#V_{o,2} = p$. We denote the set of these graphs by $PGF(r,p,1)$. 
\end{definition}
\begin{definition}
Let us now consider graphs $G \in PGF(r,p,1)$ such that additionally $\bar{v} \in V_i \Rightarrow ind(\bar{v}) \geq w$ with a $w \geq 1$. We call the set of these graphs $PGF(r,p,w)$. 
\end{definition}
\begin{definition}
Let us consider graphs $G \in GC(r + p)$ such that $V = V_{o}\cup V_i$ with outer vertices
 $v \in V_{o} \Rightarrow ind(v) = 1, outd(v) = 1$ and inner vertices $\bar{v} \in V_i \Rightarrow ind(\bar{v}) = outd(\bar{v})$. We denote the set of those graphs by $GF(r,p,1)$. We denote the set of graphs which additionally fulfil  $\bar{v} \in V_i \Rightarrow ind(\bar{v}) \geq w$ with a $w \geq 1$ by $GF(r,p,w)$.
\end{definition}
In complex quantum field theory with one field with polynomial selfinteraction it is the set $PGF(r,p,w)$ with $w = 2$ which is the basic set of graphs which contributes to a $p$ point function of $r$th order. \\
We will now also define useful simple operations on graphs. 
\begin{definition}\label{de_prret}
For $G \in GF(r,p,1)$ with $p > 0$ we define the graph $Pr(G) \in PGF(r,p,1)$ in the following way: We define $V_{o,1} := V_o$ add a $p$ element vertex set $V_{o,2}$ and in $Pr(G)$ connect each edge $e$ which had $end(e) \in V_o$ with a different vertex $ \overline{end(e)} \in V_{o,2}$. The relationship $end(e) \mapsto \overline{end(e)}$ then is a bijection between $V_{o,1}$ and $V_{o,2}$, which we denote by $vp$. \\
\\ 
On the other hand: For each $G \in PGF(r,p,1)$ together with a bijection 
\begin{align*}\label{eq_vpdf}
vp :& V_{o,2} & \mapsto & V_{o,1} \\
   & v & \mapsto & vp(v) 
\end{align*}
we define the graph $Ret(G,vp) \in GF(r,p,1)$ by connecting each edge $e$ with $ end(e) \in V_{o,2} $ with $vp(end(e))$ instead, subtract $V_{o,2}$ and set $V_o := V_{o,1}$. For $p=1$ the bijection $vp$ is trivial and therefore we will write $Ret(G)$ in this case.\\
For  $G \in GF(r,0,1)$ we define $Pr(G) := G$ and $Ret(G) := G$. This definition for $p=0$ is important to keep later formulas consistent and meaningful.
\end{definition}
\begin{definition}
Let $G \in PGF(r,1,1)$. We denote the one vertex in $V_{o,1}$ by $st(G)$ and the one vertex in  $V_{o,2}$ by $end(G)$ and the one edge $e$ with $st(e) = st(G)$ with $ank(G)$ and the one edge $\tilde{e}$ with $end(\tilde{e}) = end(G)$ with $bnk(G)$.
\end{definition}
\begin{definition}\label{de_amp}
For $G \in PGF(r,p,1)$ with $p > 0$ we define the amputated graph $Amp(G) \in GC(r)$ by subtracting $V_{o,1}$ and $V_{o,2}$ and subtracting all edges with $end(e) \in V_{o,2}$ or $st(e) \in V_{o,1}$. For $p=0$ we define $Amp(G) := G$.\\
For $G \in GF(r,p,1)$ with $p > 0$ we define the graph $Amp(G) \in GC(r)$ by subtracting $V_{o}$ and subtracting all edges with $end(e) \in V_{o}$ or $st(e) \in V_{o}$. For $p=0$ we define $Amp(G) := G$. This definition for $p=0$ is important to keep later formulas consistent and meaningful.
\end{definition}
\begin{definition}\label{de_psfpef}
Let $G \in GF(r,1,1)$. It then has one vertex $v \in V_o$ which we denote by $out(G)$. We denote by $ank(G) \in E(G)$ the edge such that $st(ank(G)) = v$ and $bnk(G) \in E(G)$ the edge such that $end(bnk(G)) = v$. A graph $G \in GF(r,1,1)$ is called closable iff $end(ank(G)) \neq st(bnk(G))$ otherwise unclosable. We can therefore define the two disjoint sets $GFC(r,1,1)$ as the set of closable graphs and $GFU(r,1,1)$ as the set of unclosable graphs.  
\begin{equation}\label{eq_sfe}
GF(r,1,1) = GFC(r,1,1) \dot{\cup} GFU(r,1,1)
\end{equation}
In the same way $GFC(r,1,w)$ and  $GFU(r,1,w)$ denote the corresponding subsets of $GF(r,1,w)$ and $PGFC(r,1,w)$ and $PGFU(r,1,w)$ the corresponding subsets of $PGF(r,1,w)$.
\end{definition}
\begin{definition}
We denote by $GFE(r,0,1)$ the set of pairs $(G,e)$ where $G \in GF(r,0,1)$ and $e \in \left[E(G)\right]$.
\end{definition}
We now come to an important one to one relationship between $GFE(r,0,1)$ and $GFC(r,1,1)$
\begin{definition}\label{de_divcl}
Let $G \in GF(r,0,1)$ and $e$ be one of its edges. 
We define the graph $Div(e,G) \in GFC(r,1,1)$ by adding to $G$ the set $V_o := \{v\}$ and replacing $e$ by two edges $e_1, e_2$ such that $st(e_1) := st(e)$, $end(e_2) = end(e)$ and $st(e_2) = end(e_1) = v$.  \\
Let on the other hand $G \in GFC(r,1,1)$ be given. We define the graph $Cl(G) \in GF(r,0,1)$ by subtracting $V_o$ and replacing the pairs of edges $(bnk(G),ank(G))$ by one edge $Cle(G)$ such that $st(Cle(G)) := st(bnk(G))$ and $end(Cle(G)) := end(ank(G))$. \\
We notice that for $G \in GFC(r,1,1)$
\begin{equation} 
Cl(Div(e,G)) = G
\end{equation}
up to isomorphy of graphs and 
$Cle(Div(e,G))$
is either $e$ or one of the edges in $G$ which is paralell to $e$.
We also note
\begin{equation}
Div(Cle(H),Cl(H)) = H
\end{equation}
up to isomorphy of graphs.
$Div$ and the pair $(Cle,Cl)$ therefore constitute a one to one relationship and its inverse between $GFC(r,1,1)$ and $GFE(r,0,1)$. 
\begin{definition}
For $G \in GF(r,0,1)$ we denote the set $DG(G) := \{H \in GFC(r,1,1): Cl(H) = G\}$. We then note 
\begin{equation}\label{eq_bigcupb}
GFC(r,1,1) = \dot{\bigcup}_{G \in GF(r,0,1)} DG(G)
\end{equation}
\begin{equation}\label{eq_bigcupw}
GFC(r,1,w) = \dot{\bigcup}_{G \in GF(r,0,w)} DG(G)
\end{equation}
\end{definition}

\end{definition}
\begin{definition}
For $G \in GF(r,p,1)$ we define 
\begin{equation}\label{eq_sumg}
Sum(G) := \sum_{v \in V_i(G)} ind(v)
\end{equation}
\end{definition}

We now give a short definition of the so called Feynman integral which is the main ingredient of the perturbation theory of quantum field theory.

\begin{definition}\label{de_feyg}
For any connected loopfree finite multigraph (directed or non\-di\-rec\-ted) $G$ the (fully massive bosonic) Feynman integral  $I_G$ in $d$ dimensions with external momenta zero is defined  \citep[eq. 8.5]{itzykson} for $\Re(1 - d/2) \geq 0$ as
\begin{equation}\label{eq_intig}
I_G(d) := \frac{1}{(4\cdot \pi)^{L(G)\cdot \frac{d}{2}}} \Gamma_G\left(1-\frac{d}{2}\right)
\end{equation}
where  
\begin{equation}
L(G) := \#E(G) - \#V(G) + 1
\end{equation}
is the so called number of loops of $G$ and $E(G)$ and $V(G)$ the set of edges and vertices of $G$ respectively and $\#$ denotes the number of elements. The generalized gamma function $\Gamma_G$ of a connected loopfree multigraph $G$ is defined for $\Re(n) \geq 0$ by the integral
\begin{equation}\label{eq_intgamma}
\Gamma_G(n) := \int \left(\prod_{e \in E(G)} d\alpha_e \right) \cdot \exp\left( - \sum_{e \in E(G)} \alpha_e \right) \cdot \mathcal{P}_G(\alpha)^{n-1}
\end{equation}
which is holomorphic in $n$.
The integration over the real variables $\alpha_e$ for each edge $e \in E(G)$ is performed over the interval $[0,\infty]$ and $\mathcal{P}_G(\alpha)$ is the so called Kirchhoff-Symanzik polynomial defined by
\begin{equation}\label{eq_poly}
\mathcal{P}_G(\alpha) := \sum_{T \in sp(G)} \left( \prod_{e \in E(G \setminus T)} \alpha_e \right)
\end{equation}
where $sp(G)$ is the set of spanning trees of $G$.
\end{definition}
\begin{definition}
For any connected loopfree finite multigraph $G$ (directed or nondirected) we define the so called mass factor 
\begin{equation}
FM_G(\lambda_1) :=  \frac{\lambda_1^{L(G)\cdot \frac{d}{2}}}{\lambda_1^{\#E(G)}}
\end{equation}
where $\lambda_1 \in \C\setminus \{0\}$ is called square of the \lq\lq mass\rq\rq\  by physicists.
\end{definition}
\begin{lemma}
For $G \in GF(r,0,1)$ 
\begin{equation}\label{eq_divint}
\sum_{e \in E(G)} I_{Div(e,G)}(d) =  \left( \#E(G) - L(G)\cdot \frac{d}{2} \right) \cdot I_G(d)
\end{equation}
\end{lemma}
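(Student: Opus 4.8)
The plan is to peel everything back to the generalized gamma function $\Gamma_G$ and then exploit the homogeneity of the Kirchhoff--Symanzik polynomial $\mathcal{P}_G$ of \eqref{eq_poly}. First I would record that subdivision preserves the loop number: since $Div(e,G)$ has exactly one more edge and one more vertex than $G$, one gets $L(Div(e,G)) = (\#E(G)+1) - (\#V(G)+1) + 1 = L(G)$. Hence the prefactor $(4\pi)^{-L(G)\cdot d/2}$ in \eqref{eq_intig} is the same for $G$ and for every $Div(e,G)$, so after dividing it out it suffices to prove $\sum_{e \in E(G)} \Gamma_{Div(e,G)}(n) = \bigl(\#E(G) + L(G)\cdot(n-1)\bigr)\,\Gamma_G(n)$ with $n = 1 - d/2$; note that then $\#E(G) + L(G)(n-1) = \#E(G) - L(G)\cdot d/2$, which is the desired coefficient.

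The combinatorial heart is the identity $\mathcal{P}_{Div(e,G)}(\alpha) = \mathcal{P}_G(\alpha)\big|_{\alpha_e \to \alpha_{e_1}+\alpha_{e_2}}$, i.e.\ subdivision replaces the single Schwinger parameter $\alpha_e$ by the sum of the parameters of the two new edges $e_1,e_2$. To prove it I would classify the spanning trees of $Div(e,G)$ by which of $e_1,e_2$ they contain. Because the new degree-two vertex meets only $e_1$ and $e_2$, every spanning tree contains at least one of them: those containing both correspond bijectively to spanning trees $T \ni e$ of $G$ (contract the length-two path back to $e$), while those containing exactly one correspond to spanning trees $T \not\ni e$. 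Tracking the complementary-edge products in \eqref{eq_poly} then shows that the trees through $e$ reproduce the $\alpha_e$-free part of $\mathcal{P}_G$, whereas the trees avoiding $e$ contribute the factor $\alpha_{e_1}+\alpha_{e_2}$ exactly where $\alpha_e$ stood, giving the claimed substitution.

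With this in hand, $\Gamma_{Div(e,G)}(n)$ is the integral \eqref{eq_intgamma} for $G$ with $\alpha_e$ replaced by $\alpha_{e_1}+\alpha_{e_2}$ and an extra exponential $e^{-\alpha_{e_1}-\alpha_{e_2}}$. Collapsing the two new variables by the elementary identity $\int_0^\infty\!\int_0^\infty d\alpha_{e_1}\,d\alpha_{e_2}\, e^{-\alpha_{e_1}-\alpha_{e_2}}\,f(\alpha_{e_1}+\alpha_{e_2}) = \int_0^\infty s\,e^{-s}f(s)\,ds$ reinserts a single factor $\alpha_e$ into the original integrand, so that $\Gamma_{Div(e,G)}(n) = \int \bigl(\prod_{f} d\alpha_f\bigr)\, e^{-\sum_f \alpha_f}\, \alpha_e\, \mathcal{P}_G(\alpha)^{n-1}$. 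Summing over $e \in E(G)$ turns the inserted factor into $\sum_e \alpha_e = \sum_f \alpha_f$.

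The last step is a one-line scaling argument. Each monomial of $\mathcal{P}_G$ is a product over the $L(G)$ edges outside a spanning tree, so $\mathcal{P}_G$ is homogeneous of degree $L(G)$. Setting $\Phi(t) := \int \bigl(\prod_f d\alpha_f\bigr)\, e^{-t\sum_f\alpha_f}\,\mathcal{P}_G(\alpha)^{n-1}$ and rescaling $\alpha \mapsto \alpha/t$ gives $\Phi(t) = t^{-(\#E(G)+L(G)(n-1))}\,\Gamma_G(n)$; differentiating at $t=1$ yields on one side $-\Phi'(1) = \int \bigl(\prod_f d\alpha_f\bigr)\bigl(\sum_f\alpha_f\bigr)e^{-\sum_f\alpha_f}\mathcal{P}_G^{\,n-1} = \sum_{e} \Gamma_{Div(e,G)}(n)$ and on the other $-\Phi'(1) = \bigl(\#E(G)+L(G)(n-1)\bigr)\Gamma_G(n)$, which is precisely the required identity; all integrals converge absolutely for $\Re(n)\ge 0$, i.e.\ $\Re(1-d/2)\ge 0$, so every manipulation is legitimate. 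I expect the main obstacle to be the spanning-tree substitution identity of the second step: the scaling and the parameter collapse are routine, but verifying that the three families of spanning trees of $Div(e,G)$ reassemble exactly into $\mathcal{P}_G$ with $\alpha_e \mapsto \alpha_{e_1}+\alpha_{e_2}$ requires careful bookkeeping of the complementary edge products in \eqref{eq_poly}.
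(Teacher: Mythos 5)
Your proof is correct, but it takes a genuinely different route from the paper's. The paper never touches the parametric representation: citing the equivalence of $FM_G(\lambda_1)\cdot I_G(d)$ with a momentum-space integral whose integrand is a product of propagators $1/(p_{ed}^2+\lambda_1)$, it observes that subdividing $e$ replaces $1/(p_e^2+\lambda_1)$ by the squared propagator $1/(p_{e_1}^2+\lambda_1)\cdot 1/(p_{e_2}^2+\lambda_1)$ with $p_{e_1}=p_{e_2}$, so that $\sum_{e\in E(G)} FM_{Div(e,G)}(\lambda_1)\, I_{Div(e,G)}(d) = \left(-\partial/\partial\lambda_1\right)\left(FM_G(\lambda_1)\, I_G(d)\right)$; since all $\lambda_1$-dependence sits in the explicit power $FM_G(\lambda_1)=\lambda_1^{L(G)d/2-\#E(G)}$, the coefficient $\#E(G)-L(G)\cdot d/2$ drops out immediately. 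You instead stay entirely inside Definition \ref{de_feyg}: the subdivision identity $\mathcal{P}_{Div(e,G)}=\mathcal{P}_G\bigl|_{\alpha_e\to\alpha_{e_1}+\alpha_{e_2}}$ (your spanning-tree classification is complete — every spanning tree of $Div(e,G)$ must contain at least one of $e_1,e_2$ because the new vertex has degree two, trees containing both biject with trees of $G$ through $e$, and each tree avoiding $e$ lifts in exactly two ways, producing the factor $\alpha_{e_1}+\alpha_{e_2}$ exactly where $\alpha_e$ stood), then the two-variable collapse reinserting a factor $\alpha_e$, then Euler scaling via homogeneity of $\mathcal{P}_G$ of degree $L(G)$. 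Your version is the more self-contained of the two: it proves the lemma directly from the paper's own definition of $I_G$ via $\Gamma_G$, with no appeal to the momentum-space representation, which the paper imports by citation from the physics literature. What the paper's argument buys is brevity and physical transparency (a mass derivative inserts one doubled propagator per edge). It is worth noticing that the two arguments are at bottom the same computation in different clothes: your scaling parameter $t$ multiplying $\sum_f\alpha_f$ is precisely the role played by $\lambda_1$ in the Schwinger parametrization, so your $-\Phi'(1)$ is the parametric avatar of the paper's $-\partial/\partial\lambda_1$, and your justification of differentiation under the integral (dominated convergence for $\Re(n)\geq 0$, where the inserted factor $\sum_f\alpha_f$ is harmless against the exponential) makes rigorous the step the paper performs in momentum space.
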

\begin{proof}
We note from \citep[eq. 6.94, 8.4, 8.5]{itzykson} that
\begin{equation} 
 FM_G(\lambda_1) \cdot I_G(d)
\end{equation} 
is an integral over so called \lq\lq Feynman parameters\rq\rq\  but can also be written as an integral in \lq\lq momentum space\rq\rq\  with the integrand being a product over all edges $ed$ of the so called propagator
\begin{equation}
\frac{1}{p_{ed}^2 + \lambda_1}
\end{equation} 
where each edge $ed \in E(G)$ has a momentum $p_{ed}$ associated with it (and of course the obligatory constraints on these \lq\lq momentum variables\rq\rq\  in each vertex). 
Now the integrand for $I_{Div(e,G)}(d)$ in these variables is identical to the one of $I_G(d)$ only the factor
\begin{equation}
\frac{1}{p_e^2 + \lambda_1}
\end{equation} 
is replaced by the factor 
\begin{equation}
\frac{1}{p_{e_1}^2 + \lambda_1} \cdot \frac{1}{p_{e_2}^2 + \lambda_1}
\end{equation}
with the additional constraint $p_{e_1} = p_{e_2}$ for the edges $e_1,e_2$ in the outer vertex of $Div(e,G)$. So by differentiating in $\lambda_1$ in the \lq\lq momentum space\rq\rq\  integrals we find 
\begin{equation}\label{eq_difint}
\sum_{e \in E(G)} FM_{Div(e,G)}(\lambda_1) \cdot I_{Div(e,G)}(d) =  \left( -\frac{\partial}{\partial \lambda_1} \right) FM_G(\lambda_1) \cdot I_G(d) 
\end{equation}  
which is equivalent to equation \eqref{eq_divint}

\end{proof}

\subsection{Matrices}
We now turn to suitable sets of integer matrices. They will turn out to be a refinement of $Gr(q)$ and its subsets defined before. 
\begin{definition}
For nonzero $q \in \N$ let us denote the set of all $q \times q$  matrices $F$ with nonnegative integer $F_{i,j} \in \N_0$ and zero diagonal $F_{i,i} = 0; \forall i=1,\ldots,r$ by $Ma(q)$. 
\end{definition}
\begin{definition}\label{de_bal}
A matrix $F \in Ma(q)$ is called balanced, i.e. $F \in MBal(q)$ iff there exists nonnegative integers $dias(F)_k \in \N_0$ such that for $k=1,\ldots,q$
\begin{equation}
dias(F)_k = \sum_{i=1}^q F_{i,k} = \sum_{i=1}^q F_{k,i}
\end{equation}
For $F \in MBal(q)$ we define  the $q \times q$ matrix $Diag(F)$  by 
\begin{equation}
Diag(F)_{i,k} := \delta_{i,k}\cdot dias(F)_k
\end{equation} 
where $\delta$ is the Kronecker-Symbol.
Let us further define the set $MBaln(q) := MBal(q)\setminus\{ 0_{q\times q}\}$.
\end{definition}
\begin{definition}\label{de_balpu}
For integers $w > 0$, $r > 0$ and $p \geq 0$ we also define the set $MF(r,p,w)$: \\ 
$F \in MF(r,p,w)$ iff 
$F \in MBaln(r + p)$ and 
\begin{equation}
dias(F)_k  \geq w
\end{equation}
for all $k = 1,\ldots,r$ and
\begin{equation}
dias(F)_k = 1
\end{equation}
for all $k=r+1,\ldots,r+p$ and
\begin{equation}\label{eq_cofmf}
cof(Diag(F) - F) \neq 0
\end{equation}
where $cof$ denotes the cofactor. \\
For $F \in MF(r,p,w)$ we define the vector 
$dia(F) := (dias(F)_1,\ldots,dias(F)_r)$ and 
\begin{equation}
Sum(F) := \sum_{i=1}^r dias(F)_i
\end{equation}
. 
\end{definition}

\begin{definition}\label{de_gr}
For a matrix $F \in MBaln(r + p)$ and a permutation $\sigma \in S_{r + p}$ we define  $F^{\sigma}$ by 
\begin{equation}\label{eq_fsigma}
F^{\sigma}_{i,j} := F_{\sigma(i),\sigma(j)}
\end{equation}
We also define the subgroup $S_r(p) \subset S_{r+p}$ as the set of permutations $\sigma$ with $\sigma(r+k) > r $ for $k=1,\ldots,p$. Then $S_r(p)$ is isomorphous to $S_r \times S_p$. 
For $F \in MF(r,p,w)$ we then define the set
\begin{equation}
Fix(F) := \{\sigma \in S_{r}(p): F^{\sigma} = F\}
\end{equation}
and the set 
\begin{equation}
WM(F) := \{F^{\sigma}; \sigma \in S_r(p)\}
\end{equation}
Then $Fix(F)$ is a subgroup of $S_r(p)$. We define a symmetry factor 
\begin{equation}
Syf(F) := \#Fix(F)
\end{equation}
 and the number of cosets 
\begin{equation}
gr(F) := \#WM(F)
\end{equation}
According to Lagranges theorem we then have 
\begin{equation}
gr(F)\cdot Syf(F) = r!\cdot p!
\end{equation}
\end{definition}
\begin{definition}\label{de_grpheu}
Let $G \in Gr(q)$. Then its adjacency matrix $Adj(G)$ is uniquely defined up to a bijective mapping $map_G$ of its vertex set $V$ to the set of dimensions $\{1,\ldots,q\}$. We call the set of these bijective vertex mappings $BV(G)$. \\ 
For a graph $G \in GF(r,p,w)$ we define such a mapping $map_G\in BV(G)$ to be admissible, i.e. $map_G \in BVA(G)$ iff any vertex $v \in V_o$ is mapped onto a dimension $map_G(v) > r$. 
\end{definition}
\begin{definition}\label{de_graph}
For a matrix $F \in Ma(q)$ we can define a loopfree multiple directed graph $G(F) \in Gr(q)$ in the obvious way: the set of vertices $V := \{1,\ldots,q\}$ a set of edges $E := \{e_{i,j}^{[k]}; 1 \leq i,j \leq q;  1 \leq k \leq F_{i,j} \}$ and the direction function $ st\left(e_{i,j}^{[k]} \right) = i$ and $ end\left(e_{i,j}^{[k]} \right) = j$ independant of $k$. With this definition $F$ is the adjacency matrix of $G(F)$ if the vertex $i$ is mapped onto the dimension $i$ of the $q \times q$ matrix. We call the correponding map $map_{G(F)}^{[c]} \in BV(G(F))$ the canonical map of $G(F)$ and under this map obviously. 
\begin{equation}
F  = Adj(G(F))
\end{equation}
\end{definition}
\begin{remark}
Let $G \in GF(r,p,w)$ and $map_{G,1} \in BVA(G)$ and $map_{G,2} \in BVA(G)$ be two different mappings of the vertices onto $\{1,\ldots,r + p\}$ and $Adj(G)_1$ and $Adj(G)_2$ the two adjacency matrices. Then there is a $\sigma \in S_r(p)$ such that $map_{G,1} = map_{G,2} \circ \sigma$ i.e. $Adj(G)_1 = Adj(G)_2^{\sigma}$. Any mapping $map_{G,1}\circ \sigma \in BVA(G)$ for any $\sigma \in S_r(p)$. 
\end{remark}
\begin{lemma}
Let $G \in GF(r,p,w)$. Then under any $map_G \in BVA(G)$ the matrix $Adj(G) \in MF(r,p,w)$. Let on the other hand $F \in MF(r,p,w)$. Then $G(F) \in GF(r,p,w)$. 
\end{lemma}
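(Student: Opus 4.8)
The plan is to verify the two implications separately, after observing that both of them split into two kinds of requirements: the degree and balance conditions, which are pure bookkeeping once one tracks how an admissible map identifies the vertex partition with the index partition $\{1,\dots,r\}\,\dot{\cup}\,\{r+1,\dots,r+p\}$, and the single genuinely nontrivial condition \eqref{eq_cofmf} on the cofactor, which I claim encodes exactly the connectivity requirement through the directed Matrix--Tree theorem.

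First I would treat the direction $G\mapsto Adj(G)$. Fix $G\in GF(r,p,w)$ and an admissible map $map_G\in BVA(G)$. Since $G$ is loopfree its adjacency matrix has zero diagonal and nonnegative integer entries, so $Adj(G)\in Ma(r+p)$. By admissibility the $p$ outer vertices of $V_o$ are sent bijectively onto $\{r+1,\dots,r+p\}$, hence the $r$ inner vertices of $V_i$ onto $\{1,\dots,r\}$. For an index $k$ the in-degree of the corresponding vertex is $\sum_i Adj(G)_{i,k}$ and its out-degree is $\sum_i Adj(G)_{k,i}$; the property $ind(\bar v)=outd(\bar v)$ for inner vertices together with $ind(v)=outd(v)=1$ for outer vertices shows every vertex is balanced, so $Adj(G)\in MBal(r+p)$ and $dias(Adj(G))_k$ is the common value of $ind$ and $outd$ at the vertex mapped to $k$. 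This yields $dias(Adj(G))_k\ge w$ for $k\le r$ and $dias(Adj(G))_k=1$ for $k>r$, and since $r>0$ forces at least one edge we have $Adj(G)\in MBaln(r+p)$. It remains to check $cof(Diag(Adj(G))-Adj(G))\ne 0$, which I defer to the connectivity step.

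For the converse I would take $F\in MF(r,p,w)$ and read off the properties of $G(F)$ under its canonical map. By construction $G(F)\in Gr(r+p)$, and it is loopfree because $F$ has zero diagonal. Setting $V_i:=\{1,\dots,r\}$ and $V_o:=\{r+1,\dots,r+p\}$, the balance of $F$ gives $ind(k)=outd(k)$ for every vertex; the conditions $dias(F)_k\ge w$ for $k\le r$ then give the inner vertices in-degree $\ge w$, while $dias(F)_k=1$ for $k>r$ makes the outer vertices satisfy $ind=outd=1$. Thus $V=V_o\,\dot{\cup}\,V_i$ has exactly the structure demanded of $GF(r,p,w)$ apart from connectivity, which once more reduces to the cofactor.

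The hard part --- and the only content beyond bookkeeping --- is the equivalence $cof(Diag(F)-F)\ne 0 \iff G(F)\in GC(r+p)$, i.e. $G(F)$ is connected in the paper's sense (any two distinct vertices joined by a trail, which is strong connectivity of the digraph). I would establish this using Tutte's directed Matrix--Tree theorem: for a balanced matrix the column sums of $Diag(F)-F$ vanish, so all of its principal diagonal cofactors coincide (which is why ``the cofactor'' is well defined), and this common value counts the spanning arborescences of $G(F)$ rooted at a fixed vertex. Hence $cof(Diag(F)-F)\ne 0$ is equivalent to the existence of at least one spanning arborescence. Such an arborescence spans and connects all vertices, forcing $G(F)$ to be weakly connected, and conversely a spanning arborescence is readily extracted from weak connectivity. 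Finally I would invoke the standard fact that a weakly connected balanced digraph is automatically strongly connected --- a sink component of the condensation would receive edges but emit none, contradicting balance unless it is the whole graph --- so weak and strong connectivity coincide here and both match $GC$. Applying this equivalence in both directions (using that $G\in GF(r,p,w)$ is connected by definition in the forward direction, and reading connectivity back out in the converse) completes the proof. The only real care needed is to match the orientation convention of the Matrix--Tree theorem to the $st/end$ convention fixed in Definition~\ref{de_graph}, which affects in- versus out-arborescences but not the vanishing of the cofactor.
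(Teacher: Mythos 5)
Your proof is correct, but for the one nontrivial step --- the equivalence between condition \eqref{eq_cofmf} and connectivity --- you take a genuinely different route from the paper. The paper settles this step through Euler trails: it has already established (lemma \ref{le_coneul}, via the BEST theorem of van Aardenne-Ehrenfest/de Bruijn and Tutte) that for a balanced matrix the quantity $Eul(F)$, which is the cofactor times a product of factorials $\prod_k (dias_k - 1)!$, counts the Euler trails, so in the forward direction it argues ``connected and balanced $\Rightarrow$ Eulerian $\Rightarrow$ Euler trail count nonzero $\Rightarrow$ cofactor nonzero,'' and conversely reads connectivity off from a closed trail covering all edges. You instead invoke the directed Matrix--Tree theorem, so that the cofactor counts spanning arborescences rooted at a fixed vertex (well defined for balanced matrices since row and column sums of $Diag(F)-F$ both vanish), and then pass from arborescences to connectivity via the condensation argument that a weakly connected balanced digraph is strongly connected. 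Both are sound; the paper's choice is essentially free because lemma \ref{le_coneul} is needed anyway for the Euler-trail counts $Eul(\tau(\alpha))$ in the weight-polynomial machinery, whereas your argument is more self-contained (Matrix--Tree is more elementary than BEST, which is usually derived from it) and has the merit of making explicit the weak-versus-strong connectivity point that the paper's definition of connectedness (trails between all vertex pairs, i.e.\ strong connectivity) leaves implicit. One small imprecision: your clause that ``a spanning arborescence is readily extracted from weak connectivity'' is false for general digraphs --- it needs reachability of the root --- but it is harmless here since you immediately invoke balance to upgrade weak to strong connectivity, from which the arborescence does follow.
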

\begin{proof}
Let $G \in GF(r,p,w)$. Then $Adj(G)$ is an integer and balanced matrix with $0$ diagonal by construction. For $v \in V$ we have $dias(Adj(G))_{map_G(v)} = ind(v)$ and $dias(Adj(G))_k \geq w$ for $1 \leq k \leq r$ and $dias(Adj(G))_k = 1$ for $r+1 \leq k \leq r+p$. Moreover because $G$ is connected it is Eulerian and therefore $cof(Diag(Adj(G)) - Adj(G)) \neq 0$ and therefore $Adj(G) \in MF(r,p,w)$ \\
Let on the other hand $F \in MF(r,p,w)$. Then $G(F)$ is a balanced graph. It is connected because the number of its Euler trails is not zero because of equation \eqref{eq_cofmf} and for vertex $k$ we have $ind(k) = outd(k) = dias(F)_k \geq w$ for $1 \leq k \leq r$ and $ind(k) = outd(k) = dias(F)_k = 1$ for $r+1 \leq k \leq r+p$. But therefore $G(F) \in GF(r,p,w)$. 
\end{proof}
\begin{definition}
For a matrix $F \in Ma(q)$ we define a multiple directed graph $GK(F)$ to be the graph $G(F)$ without possible isolated vertices of $G(F)$. 
\end{definition}
\begin{definition}
For a given admissible mapping $map_G \in BVA(G)$ of the graph $G \in GF(r,p,w)$ onto matrix dimensions we define the symmetry factor $Syf(G)$ of such a graph by 
\begin{equation}
Syf(G) := Syf(Adj(G))
\end{equation}
It is clear that $Syf(G)$ does not depend on the specific mapping $map_G \in BVA(G)$
We now note that there are exactly
\begin{equation}\label{eq_syfg}
gr(G) := \frac{r!\cdot p!}{Syf(G)} 
\end{equation}
different matrices $F \in MF(r,p,w)$ such that there is a bijective mapping $map_G \in BVA(G)$ resulting in $Adj(G) = F$.  
\end{definition}
\begin{remark} For $G \in GF(r,p,w) $ the number
$Syf(G)$ is also the number of those graph automorphisms of $G$ which operate on the vertices and map outer vertices onto outer vertices and inner vertices onto inner vertices. 
\end{remark} 
\begin{definition}
Let $F \in MF(r,p,w)$ with $p > 0$. We then define the $r \times r$ matrix $Amp(F)$ by $Amp(F)_{i,j} := F_{i,j}$ for $1 \leq i,j \leq r$. Then $Amp(F) = Adj(Amp(G(F)))$. ($Adj$ here is taken under the canonical mapping). We also define $Pr(F) := Adj(Pr(G(F)))$. In this case $Adj$ is taken under the canonical mapping for the vertices $ v \in V_{o,1} \cup V_i$. Additionaly we map $ v \in V_{o,2}$ onto $r + p +k$ iff $vp^{-1}(v)$ was mapped onto $r + k$ by the canonical mapping.
\end{definition}
\begin{definition}
Let $F \in MF(r,1,w)$. Then, because $dias(F)_{r+1} = 1$ we must have $F_{\alpha,r+1} = \delta_{\alpha,i}$ and $F_{r+1,\beta} = \delta_{j,\beta}$ for numbers $1 \leq i,j \leq r$. We denote $bnke(F) := i$ and $anke(F) := j$ and $Cle(F) := (bnke(F),anke(F))$. A matrix $F \in MF(r,1,w)$ is called closeable iff $bnke(F) \neq anke(F)$ and unclosable otherwise. We denote the subset of closeable matrices in  $MF(r,1,w)$ by $MFC(r,1,w)$ and the set of uncloseable matrices by $MFU(r,1,w)$.    
\end{definition}
\begin{definition}
Let $F \in MFC(r,1,w)$. We then define the $r \times r$ matrix $Cl(F)$ by 
\begin{equation}
Cl(F)_{\alpha,\beta} = F_{\alpha,\beta} + \delta_{\alpha,bnke(F)}\cdot \delta_{\beta,anke(F)}
\end{equation}
for $1 \leq \alpha,\beta \leq r$. Then $Cl(F) = Adj(Cl(G(F)))$ under the canonical mapping and therefore $Cl(F) \in MF(r,0,w)$.
\end{definition}
\begin{definition}
Let $F \in MF(r,0,w)$ and $1\leq i,j \leq r$ and $i\neq j$ and $F_{i,j} > 0$.
Then we define the $(r + 1) \times (r + 1)$ matrix $J:=Div((i,j),F)$ by $J_{\alpha,\beta} := F_{\alpha,\beta}$ iff $1 \leq \alpha,\beta \leq r$ and $i \neq \alpha$ or $j \neq \beta$. $J_{\alpha,r+1} := \delta_{i,\alpha}$ and $J_{r+1,\beta} = \delta_{j,\beta}$ and $J_{i,j} := F_{i,j} - 1$. With this definition $Div((i,j),F) \in MFC(r,1,w)$ and $Div((i,j),F) = Adj(Div(e,G(F))$ 
where $e$ is any edge $e_{i,j}^{[k]}$ and $Adj$ is taken under the canonical mapping of $G(F)$ for $v \in V_i$ and the additional vertex is mapped onto $r + 1$. We notice that $Cl(Div((i,j),F)) = F$ for any $F \in MF(r,0,w)$ and any $(i,j)$ such that $F_{i,j} \neq 0$. Moreover $Div(Cle(J)),Cl(J)) = J$ for any $J \in MFC(r,1,w)$. 
\end{definition}
\begin{definition}
Let $F \in MFC(r,0,w)$. We define the set 
\begin{equation}
DM(F) := \{H \in MF(r,1,w): Cl(H) = F\}
\end{equation}
\end{definition}
\begin{definition}
For integer $q \geq 1$ let there be numbers $\lambda_q \in \C$ 
of which only finitely many are different from $0$.  Let us denote the vector $\lambda = (\lambda_1,\lambda_2,\ldots )$. Let $w > 1$. For $F \in MF(r,p,w)$ we define 
\begin{equation}
\Pi_{F}(\lambda) := \prod_{k=1}^r \left(-\lambda_{dias(F)_k}\right)
\end{equation}
By the same token for $G \in GF(r,p,w)$ or $G \in PGF(r,p,w)$ we define 
\begin{equation}
\Pi_{G}(\lambda) := \prod_{v \in V_i} \left(-\lambda_{ind(v)} \right)
\end{equation}
Therefore for $G \in GF(r,p,w)$
\begin{equation}
\Pi_{Pr(G)}(\lambda) = \Pi_{G}(\lambda)
\end{equation}
and 
\begin{equation}
\Pi_{G}(\lambda) = \Pi_{Adj(G)}(\lambda)
\end{equation}

\end{definition}

\subsection{Permutations}
In a further step of refinement we will now define suitable permutations which correspond to the matrices and graphs we have defined before. 

\begin{definition}\label{de_tup}
For a natural number $w \in \N$ with $w \geq 1$ we define $Tup(r,w)$ as the set of all ordered $r$ tuples $(d_1,\ldots,d_r)$ with $d_k \in \N$ and $d_k \geq w$ for all $k=1,\ldots,r$\\ 
For $w > 1$ and $tu := (d_1,\ldots,d_r) \in Tup(r,w)$ we define $Sum(tu) := \sum_{k=1}^r d_k$
and
\begin{equation}
\Pi_{tu}(\lambda) := \prod_{k=1}^r \left(-\lambda_{d_k}\right)
\end{equation}
\end{definition}

\begin{definition}\label{de_qaqb}
For $tu :=  (d_1,\ldots,d_r) \in Tup(r,w)$ and $p \geq 0$ we define the integers $D_0(tu) :=  0$ and $D_i(tu) := D_{i - 1}(tu) + d_i$ for $i=1,\ldots,r$ with $R := Sum(tu)$ and $D_i(tu) :=R - r + i$ for $r < i \leq r+ 2\cdot p$. \\
We also define the functions 
\begin{equation}
qa_{tu}: \{1,\ldots,R + p\} \rightarrow \{1,\ldots,r + p\}
\end{equation} and 
\begin{equation}
qb_{tu}: \{1,\ldots,R + 2\cdot p\} \rightarrow \{0,\ldots,r + 2\cdot p\}
\end{equation}
 defined for $j = 1,\ldots,R$ by  
\begin{equation}
qa_{tu}(j) = qb_{tu}(j) = u \Leftrightarrow D_{u-1}(tu) < j \leq D_u(tu)
\end{equation}
and by
$qa_{tu}(j) := r+ j - R $ and $qb_{tu}(j) := r + p + j-R$ for $R < j \leq R + p$ \\ and by $qb_{tu}(j) := 0$ for $R + p < j \leq R + 2\cdot p$. \\
\end{definition}
\begin{definition}
Let $tu = (d_1,\ldots,d_r) \in Tup(r,2)$, $R = Sum(tu)$ and an integer $p \geq 0$ be given and let $qa_{tu}$ be the corresponding function defined in definition \ref{de_qaqb}. Then a permutation $\sigma \in S_{R+p}$ is called loopfree iff $qa_{tu}(k) \neq qa_{tu}(\sigma(k))$ for $k = 1,\ldots,R+p$. We call the set of these loopfree permutations $SY(tu,r,p)$
\end{definition}
\begin{definition}
For $\sigma \in S_{R + p}$ we define the extension $Ext(\sigma) = \bar{\sigma} \in S_{R + 2 \cdot p}$ to be $\bar{\sigma}(k) := \sigma(k)$ for $1 \leq k \leq R + p $ and $\bar{\sigma}(k) := k$ for $R + p < k \leq R + 2\cdot p$. 
\end{definition}
\begin{definition}
Let $tu \in Tup(r,w)$ and $p \geq 0$ and 
 let $\sigma \in SY(tu,r,p)$. We define the following integer matrices:
The $(r+p)\times (r+p)$ matrix
\begin{equation}
Adj(\sigma,tu,p)_{i,j} := \sum_{k=D_{i-1}(tu)+1}^{D_i(tu)} \delta_{qa_{tu}(\sigma(k)),j}
\end{equation}  
and the $(r + 2p) \times (r + 2p)$ matrix 
\begin{equation}
Adjo(\sigma,tu,p)_{i,j} := \sum_{k=D_{i-1}(tu) + 1}^{D_i(tu)} \delta_{qb_{tu}(Ext(\sigma)(k)),j}
\end{equation}  
\end{definition}
\begin{definition}\label{de_adcon}
	Given the data $tu \in Tup(r,2)$ and $p \geq 0$ as before we call a permutation $\sigma \in SY(tu,r,p)$ connected iff $Adj(\sigma,tu,p) \in MF(r,p,1) $. We call  $\sigma$ admissible if either $p=0$ or if $\sigma$ contains $p$ element disjoint cycles $cyc_1,\ldots,cyc_p$ such that $cyc_k(R+k) \neq R + k$ (for $p=1$ of course admissibility is equivalent to loopfreeness). We call the set of permutations which are loopfree, admissible and connected $SF(tu,r,p)$. 
\end{definition}
\begin{remark}
With this definition it is easy to see that for $tu = (d_1,\ldots,d_r)$ and $\sigma \in SF(tu,r,p)$
\begin{equation}
dias(Adj(\sigma,tu,p))_q = d_q
\end{equation}
for $q = 1,\ldots,r$ and 
\begin{equation}
dias(Adj(\sigma,tu,p))_q = 1
\end{equation}
for $q = r+1,\ldots, r+p$.
\end{remark}
\begin{definition}\label{de_gsigma}
Let $tu \in Tup(r,w)$ and $p \geq 0$ an integer. For $\sigma \in SF(tu,r,p)$ we can define the graph $G(\sigma)$ in the following way: \\
The vertex sets $V_i := \{1,\ldots,r\}$ and $V_o := \{r+1,\ldots,r+p\}$, the set of egdes $E := \{1,\ldots,Sum(tu) + p \}$ and $J(b)_1 := qa_{tu}(b)$ and $J(b)_2:= qa_{tu}(\sigma(b))$. With these definitions $G(\sigma) \in GF(r,p,w)$ and $Adj(\sigma,tu,p)$ is an adjacency matrix of $G(\sigma)$ with the obvious mapping of the vertices and $Adjo(\sigma,tu,p) = Adj(Pr(G(\sigma)))$ with the obvious mapping.
\end{definition}
So the permutations $\sigma \in SF(tu,r,p)$ are a refinement of both the graphs and the adjacency matrices belonging to them. We therefore now work on a relationship between matrices $F \in MF(r,p,w)$ and permutations $\sigma \in SF(tu,r,p)$ such that $Adj(\sigma,tu,p) = F$. 

For this purpose we define the so called weight polynomial which plays a fundamental role in quantum field theory. This paper will give a fundamentally new formulation of it to connect the Wick/Isserli evaluation of moments of quantum field theory to random walk results. For the standard formulation (differing by a global factor) see e.g. \citep[ch. 6]{phi4}.
\begin{definition}
For $\sigma \in SF(tu,r,p)$ we define the so called weight function $wei_\sigma$ as the monomial
\begin{equation}
wei_\sigma(m,p) := m^{cycn(\sigma)-p}
\end{equation}
where $cycn(\sigma)$ is the number of cycles of $\sigma$ and $m$ is a general complex variable. \\
For $F \in MF(r,p,w)$ we define the weight set
\begin{equation} 
WS(F) := \{\sigma \in SF(dia(F),r,p); Adj(\sigma,dia(F),p) = F\}
\end{equation}
and the weight function as the polynomial
\begin{equation}
wei_F(m,p) := \sum_{\sigma \in WS(F)} wei_{\sigma}(m,p)
\end{equation} 
\end{definition}
We will now work on a different formulation of $wei_{F}(m,p)$ in terms of partitions of $F$, avoiding any reference to permutations. This will result in the major connection point to results of random walks. 
\begin{definition}\label{de_komp}
For a matrix $F \in MBaln(q)$ we define the following quantities: \\  
Let $komp(F)$ be the matrix which results from $F$ by deleting all columns and rows with an index $k$ such that $dias(F)_k=0$. If $komp(F)$ is a $n\times n$ matrix we define $red(F) := n$ \\ 
We also define  
\begin{equation}
Mud(F) := \prod_{1 \leq k \leq r} dias(F)_k!
\end{equation}
and 
\begin{equation}
Co(F) = cof\left(Diag\left(komp\left(F\right)\right) - komp\left(F\right)\right)
\end{equation}
\begin{equation}
Eul(F) := \left(\prod_{1\leq k \leq red(F) } \left(dias(komp(F))_k-1\right)!\right) 
\cdot Co(F)
\end{equation}
where $cof$ is the cofactor of the $red(F) \times red(F) $ matrix in brackets behind it. We further define
\begin{equation}
Mult(F) := \prod_{1\leq i,j \leq r} F_{i,j}!
\end{equation}
\end{definition}
\begin{lemma}\label{le_coneul}
Let $F \in MBaln(q)$ and $G$ a graph with $Adj(G) = komp(F)$. $G$ is connected iff $Eul(F) \neq 0$.  
\end{lemma}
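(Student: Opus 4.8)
The plan is to reduce the statement to the classical directed Matrix--Tree theorem together with the balance hypothesis. First I would dispose of the factorial prefactor in the definition of $Eul(F)$. Since $komp(F)$ is obtained from $F$ by striking every index of zero diagonal sum, the surviving diagonal sums all satisfy $dias(komp(F))_k \geq 1$, so each factor $\left(dias(komp(F))_k - 1\right)!$ is a positive integer and the product $\prod_{1 \leq k \leq red(F)}\left(dias(komp(F))_k - 1\right)!$ is strictly positive. Hence $Eul(F) \neq 0$ if and only if $Co(F) = cof\left(Diag(komp(F)) - komp(F)\right) \neq 0$, and it suffices to characterise the non-vanishing of this single cofactor.

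Next I would identify $M := Diag(komp(F)) - komp(F)$ as the Kirchhoff (Laplacian) matrix of the directed graph $G$: the off-diagonal entry $-M_{i,j}$ counts the edges from $i$ to $j$, while the $i$-th diagonal entry is the common in- and out-degree $dias(komp(F))_i$ of vertex $i$ (that $Diag$ is simultaneously the in-degree and out-degree matrix is exactly the balance of $F$, so the usual in-tree versus out-tree ambiguity is immaterial here). By Tutte's directed Matrix--Tree theorem the cofactor $Co(F)$ equals the number of spanning arborescences of $G$ oriented towards the vertex $r$ indexed by the struck row and column. In particular $Co(F)$ is a nonnegative integer, and $Co(F) \neq 0$ holds precisely when $G$ admits at least one such arborescence; the existence of an in-arborescence rooted at $r$ is in turn equivalent to the statement that every vertex of $G$ can reach $r$ along a directed trail.

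It then remains to show that, for the balanced $G$, the assertion \emph{``every vertex reaches $r$''} coincides with connectivity in the sense of the paper (strong connectivity). One direction is immediate: if $G$ is connected then every vertex reaches $r$, so an arborescence exists and $Co(F) \neq 0$. For the converse I would exploit the balance. Assume every vertex reaches $r$ and let $S \subseteq V$ be the set of vertices reachable from $r$; by construction every edge leaving $S$ ends in $S$, so summing $ind(v) = outd(v)$ over $v \in S$ and cancelling the edges internal to $S$ forces the number of edges from $V \setminus S$ into $S$ to vanish. But any vertex of $V \setminus S$ reaches $r \in S$, and such a trail must cross from $V \setminus S$ into $S$ at its first vertex lying in $S$; this contradiction gives $V \setminus S = \emptyset$, i.e. $r$ reaches every vertex. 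Together with the hypothesis that every vertex reaches $r$ this yields strong connectivity. Combining the two directions gives that $G$ is connected $\iff Co(F) \neq 0 \iff Eul(F) \neq 0$.

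The main obstacle is the correct bookkeeping in the middle step: fixing the orientation convention for the counted arborescences consistently with the sign convention in $Diag(komp(F)) - komp(F)$ when invoking the directed Matrix--Tree theorem, and then the balance argument of the last paragraph, which is precisely what upgrades the one-sided reachability encoded by a single cofactor to full strong connectivity. Everything else is routine.
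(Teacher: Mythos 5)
Your proof is correct, but it follows a genuinely different route from the paper's. The paper handles the forward direction in one stroke by citing the BEST theorem (van Aardenne-Ehrenfest--de Bruijn/Tutte): $Eul(F)$, factorials included, \emph{is} the number of Euler trails of $G$, which is nonzero since a connected balanced finite digraph is Eulerian; and it handles the converse by linear algebra: if $G$ has $k>1$ components, each component's Laplacian $Diag(komp(\cdot))-komp(\cdot)$ has a one-dimensional kernel, so the full Laplacian has kernel of dimension $k>1$ and every cofactor vanishes. You instead strip the (manifestly positive) factorial prefactor, invoke only the directed Matrix--Tree theorem to read $Co(F)$ as an arborescence count, and then supply a self-contained combinatorial argument -- the out-closed set $S$ of vertices reachable from the root, with balance forcing the in-edges of $S$ to vanish -- to upgrade the one-sided reachability certified by a single principal cofactor to the strong connectivity that the paper's trail-based definition of connectedness demands; your remark that balance makes $Diag$ simultaneously the in- and out-degree matrix, neutralizing the in-tree/out-tree convention, is exactly the right bookkeeping point. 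What each approach buys: yours uses a strictly weaker classical ingredient (Matrix--Tree rather than BEST) and replaces the kernel-dimension step by an elementary reachability argument, so it is more self-contained; the paper's proof is shorter and, more importantly, establishes along the way that $Eul(F)$ equals the exact number of Euler trails, a stronger fact that the surrounding development actually consumes (e.g. the count $\#Col(g,\tau)=\prod_{\alpha}Eul(\tau(\alpha))$ in \eqref{eq_eulcol}), whereas your argument certifies only the nonvanishing equivalence stated in the lemma.
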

\begin{proof}
Let $G$ be connected. Then $Eul(F)$ is the number of Euler trails on $G$ \cite{aar,tutte} which is nonzero because $G$ as a balanced and connnected finite graph is Eulerian.  Let on the other hand $G$ be a graph which is not connected. Then because $G$ is a balanced graph and nonempty it must have connection components which we call $G_1,\ldots,G_k$. They are balanced and Eulerian.  Therefore $Diag(komp(Adj(G_j))) - komp(Adj(G_j))$ has a kernel with dimension $1$. Therefore the matrix $Diag(komp(Adj(G))) - komp(Adj(G))$ has a kernel with dimension $k > 1$. Therefore $cof(Diag(komp(F)) - komp(F)) = 0$.
\end{proof}
\begin{definition}
Let $F \in MBaln(q)$. We call $F$ connected iff $Eul(F) \neq 0$. We call the set of connected matrices $MC(q)$
\end{definition}
\begin{definition}\label{de_bpp}
For a matrix $F \in MF(r,p,1)$ we define a $p$ admissible balanced $s$-partition $\tau \in Partc(F,p,s)$ to be any ordered tupel 
\begin{equation}
\tau = (\tau(1),\ldots,\tau(s))
\end{equation}
of matrices $\tau(\alpha) \in MBaln(r+p)$ with 
\begin{equation}\label{eq_admiss}
\sum_{k=r+1}^{r+p} dias(\tau(\alpha))_k \leq 1
\end{equation}
 for $\alpha=1,\ldots,s$ such that
\begin{equation}\label{eq_tausum}
F = \sum_{1 \leq i \leq s} \tau(\alpha)
\end{equation} 
We define $Part(F,p,s)$ to be the set of equivalence classes of $Partc(F,p,s)$ under permutation in the $s$-tupel and identify any representative of the class with it in the sequel. Because of equation \eqref{eq_admiss} it is trivial to note that a $p$-admissible partition must contain at least $p$ elements.
\end{definition}
\begin{definition}
For $\tau \in Partc(F,p,s)$ and $\tau = (\tau(1),\ldots,\tau(s))$ choose the matrices $\tau^{(1)}, \ldots,\tau^{(q)}$ to be such that \\
$ i \neq j \Rightarrow \tau^{(i)} \neq \tau^{(j)} $ and \\
for each $1 \leq i \leq s$ there is a number $k$ such that $\tau(i) = \tau^{(k)}$. \\ Let $m_k$ be the number 
\begin{equation}
m_k := \#\{\tau(i): i=1\ldots,s \wedge \tau(i) = \tau^{(k)}\}
\end{equation}
Then we define 
\begin{equation}
Sym(\tau) := \prod_{k=1}^{q} m_k!
\end{equation}
$Sym(\tau)$ does not depend on the representative of $\tau$ in $Partc(F,p,s)$ and so it is also well defined on $Part(F,p,s)$. 
\end{definition}
\begin{definition}
Let $F \in MF(r,p,w)$ and $\tau \in Partc(F,p,s)$ and 
$tu = dia(F)$.
We define a selection $g$ of $\tau$ to be any function 
\begin{equation}
g: \{1,\ldots,R + p\} \mapsto \{1,\ldots,s\} \times \{1,\ldots,r + p\}
\end{equation} 
which fulfils the following conditions: 
\begin{equation}
\tau(\alpha)_{i,k} = \#\left(g^{-1}(\alpha,k) \cap \{D_{i-1}(tu) + 1,\ldots,D_i(tu)\} \right)
\end{equation}
for $1 \leq i,k \leq r + p$ and $1 \leq \alpha \leq s$. \\
We denote the set of selections of $\tau$ by $Sel(\tau)$. By simple combinatorics we note that 
\begin{equation}\label{eq_selnum}
\#Sel(\tau) = \frac{Mud(F)}{\prod^s_{\alpha = 1}Mult(\tau(\alpha))} 
\end{equation}
\end{definition}
\begin{definition}\label{de_canong}
Let $F \in MF(r,p,w)$ and $\tau \in Partc(F,p,s)$ and $tu = dia(F)$.
Let $g \in Sel(\tau)$. We define the $g$-canonical representation of graphs $(G_1,\ldots,G_s)$ with $G_\alpha = (V_\alpha, E_\alpha, J_\alpha)$ by defining $i \in V_\alpha$ iff 
\begin{equation}
\sum_{k=1}^{r+p} \tau(\alpha)_{i,k} \neq 0
\end{equation}
and $b \in E_\alpha$ iff 
\begin{equation}
b \in \bigcup_{k=1}^{r+p} g^{-1}(\alpha,k)
\end{equation}
and $J_\alpha(b)_1 = qa_{tu}(b)$
and  $J_\alpha(b)_2 = g(b)_2$ (where $g(.)_2$ denotes the second component of $g$).\\ 
With this representation $Adj(G_\alpha) = komp(\tau(\alpha))$ and $G_{\alpha} = GK(\tau(\alpha))$ up to isomorphism of graphs.  
\end{definition}
\begin{definition}\label{de_eulcol}
With the same data as in definition \ref{de_canong} let $(G_1,\ldots,G_s)$ be a $g$-canonical representation. Then we denote the set of collections of Euler trails $(T_1,\ldots,T_s)$ where $T_\alpha$ is an Euler trail on $G_\alpha$ by $Col(g,\tau)$. Then we know that for a given $\tau \in Partc(F,p)$ 
\begin{equation}\label{eq_eulcol}
\#Col(g,\tau) = \prod_{\alpha=1}^s Eul(\tau(\alpha))
\end{equation}  
(according to lemma \ref{le_coneul} this is even true if $Col(g,\tau) = \emptyset$).
Each Euler trail $T_\alpha$ can in this representation be written by a sequence of integers
\begin{equation} 
T_\alpha = (b_1(\alpha),\ldots,b_{\#E_\alpha}(\alpha))
\end{equation}
which denotes the sequence of edges in the canonical representation. The sequence is unique up to a cyclic permutation. 
\end{definition}
\begin{lemma}\label{le_sigcol}
Let $F \in MF(r,p,w)$ and $tu = dia(F)$.
and $\sigma \in WS(F)$. Then $\sigma$ uniquely induces a partition $\tau \in Partc(F,p,cycn(\sigma))$ up to a permutation of the $\tau(\alpha)$ and it uniquely induces a selection $g \in Sel(\tau)$ up to one of the $Sim(\tau)$ permutations of those indices $\beta$ which belong to identical matrices $\tau(\beta)$  in the partition and a collection of Euler trails $col \in Col(g,\tau)$. We denote the equivalence class (with respect to permutations of the $\tau(\alpha)$ and permutations of those indices $\beta$ which belong to identical matrices $\tau(\beta)$ in the partition) of the three elements induced 
\begin{equation}
endu(\sigma) := [(\tau,g,col)]
\end{equation} 
\end{lemma}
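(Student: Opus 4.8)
The plan is to read the triple $(\tau,g,col)$ off the cycle structure of $\sigma$ directly, exploiting the single fact that every cycle of $\sigma$ traces a closed trail in $G(\sigma)$. First I would record the observation that drives everything: in the graph of Definition \ref{de_gsigma} an edge $b \in \{1,\ldots,R+p\}$ runs from $qa_{tu}(b)$ to $qa_{tu}(\sigma(b))$, while the edge $\sigma(b)$ \emph{starts} at $qa_{tu}(\sigma(b))$; hence $end(b)=st(\sigma(b))$, so applying $\sigma$ always moves us forward along the graph. Consequently any cycle $c=(b,\sigma(b),\sigma^2(b),\ldots)$ of $\sigma$ is a closed trail using each of its edges exactly once. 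Loopfreeness excludes fixed points, so every cycle has length $\geq 2$ and carries $\geq 2$ edges. Listing the $cycn(\sigma)$ cycles as $c_1,\ldots,c_s$ with $s=cycn(\sigma)$ and writing $B_\alpha\subset\{1,\ldots,R+p\}$ for the edge set of $c_\alpha$, these sets partition the full edge set of $G(\sigma)$.

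Next I would set $\tau(\alpha)$ to be the partial adjacency matrix counting only the edges of $B_\alpha$, i.e. $\tau(\alpha)_{i,k}$ is the number of $b\in B_\alpha$ with $qa_{tu}(b)=i$ and $qa_{tu}(\sigma(b))=k$. Because each cycle is a closed trail, each $\tau(\alpha)$ is balanced, and it is nonzero by the length bound, so $\tau(\alpha)\in MBaln(r+p)$; since the $B_\alpha$ partition all edges, $\sum_\alpha \tau(\alpha)=Adj(\sigma,tu,p)=F$. It remains to verify the $p$-admissibility inequality \eqref{eq_admiss}, and this is exactly where the admissibility clause of Definition \ref{de_adcon} enters: the single outgoing edge of the outer vertex $r+k$ is $R+k$ and its single incoming edge is $\sigma^{-1}(R+k)$, so $r+k$ meets only the cycle containing $R+k$; the assumed $p$ element-disjoint cycles with $cyc_k(R+k)\neq R+k$ force the edges $R+k$ into distinct cycles, whence each $\tau(\alpha)$ carries at most one outer vertex. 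Thus $\tau\in Partc(F,p,s)$, and (as noted after Definition \ref{de_adcon}) $s\geq p$.

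The selection and the trails come from the same data. I take $g(b):=(\alpha,qa_{tu}(\sigma(b)))$ with $\alpha$ the index of the cycle containing $b$; the defining count of $Sel(\tau)$ then follows immediately from the formula for $\tau(\alpha)$. For the trails I let $T_\alpha$ be the cyclic word obtained by running around $c_\alpha$; by the closed-trail observation $T_\alpha$ is an Euler trail on the canonical graph $GK(\tau(\alpha))$ (which is connected, so $Eul(\tau(\alpha))\neq 0$ by Lemma \ref{le_coneul}), and it is intrinsically defined only up to the cyclic permutation already absorbed into Definition \ref{de_eulcol}, so $col=(T_1,\ldots,T_s)\in Col(g,\tau)$. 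Well-definedness is then a counting matter: the \emph{set} of cycles of $\sigma$ is canonical, so the only freedom is (i) the order in which the cycles are listed, which is exactly the permutation of the $\tau(\alpha)$ under which $Part(F,p,s)$ is formed, and (ii) when $\tau(\alpha)=\tau(\beta)$ coincide as matrices, the interchange of their labels, which permutes the corresponding values of $g$ and entries of $col$ among identical blocks — precisely the $Sym(\tau)$ relabellings of the statement. Passing to these quotients makes $endu(\sigma):=[(\tau,g,col)]$ unambiguous.

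The main obstacle I expect is not the geometric core but the bookkeeping: matching my $\tau(\alpha)$, $g$ and $T_\alpha$ to the literal counting definitions of $Partc(F,p,s)$, $Sel(\tau)$ and $Col(g,\tau)$, and showing that the residual ambiguity is \emph{exactly} the stated quotient — cycle reordering together with the $Sym(\tau)$ block relabellings — so that in particular no further freedom (such as the choice of starting edge in each $T_\alpha$) survives beyond what Definition \ref{de_eulcol} already handles.
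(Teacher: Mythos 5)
Your proposal is correct and follows essentially the same route as the paper's own proof: decompose $\sigma$ into its disjoint cycles, read off $\tau(\alpha)$ as the per-cycle adjacency counts (using loopfreeness for balance and the admissibility clause for \eqref{eq_admiss}), define $g(b)$ by cycle membership together with $qa_{tu}(\sigma(b))$, obtain each $T_\alpha$ by running around its cycle, and identify the residual freedom as cycle reordering, i.e.\ the $Sym(\tau)$ relabellings. Your explicit remarks that $end(b)=st(\sigma(b))$ and that the starting-edge ambiguity is absorbed by the cyclic-permutation clause of Definition \ref{de_eulcol} are just slightly more detailed renderings of steps the paper states in passing.
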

\begin{proof}
We write $\sigma$ as a product of element disjoint cycles
\begin{equation}\label{eq_cycsig}
\sigma = \prod_{\alpha = 1}^{cycn(\sigma)} cyc_\alpha
\end{equation}
where $s := cycn(\sigma)$ is the number of cycles of $\sigma$ and the cycles $cyc_\alpha$ are uniquely defined by $\sigma$ up to permutation of their indices and each cycle can be written in the form 
\begin{equation}
cyc_\alpha = (b_1(\alpha),\ldots,b_{n(\alpha)}(\alpha))
\end{equation} 
such that $\sigma(b_h(\alpha)) = b_{h + 1} (\alpha)$ for $1 \leq h < n(\alpha)$ and $\sigma(b_{n(\alpha)}(\alpha)) = b_1(\alpha)$. \\
Because the cycles are element disjoint and $\sigma(b) \neq b$ because of loopfreeness, we can for each $b \in \{1,\ldots,R+p\}$ define
\begin{equation}
endes(b):= \alpha
\end{equation}
iff $b$ is an element of $cyc_\alpha$.
Now for $1 \leq i,k \leq r+p$ we define the matrices 
\begin{equation}
\tau(\alpha)_{i,k} := \#\{b_h(\alpha): qa_{tu}(b_h(\alpha)) =  i \wedge qa_{tu}(\sigma(b_h(\alpha)) = k\ \wedge 1 \leq h \leq n(\alpha)\}
\end{equation}
With this definition $\tau(\alpha) \in MBaln(r + p)$ because of the loopfreeness condition for $\sigma$ and the cyclic nature of $cyc_\alpha$. Moreover $\tau := (\tau(1),\ldots,\tau(s))$ fulfils 
equation \eqref{eq_tausum} because of the definition of $Adj(\sigma,tu,p)$. Because of the admissibility of $\sigma$ assuming that indexes $1 \leq k \leq p$ are such that $cyc_k(R + k) \neq R + k$ we have $dias(\tau(k))_{r+n} = \delta_{k,n}$ for $1 \leq k \leq p$ and $dias(\tau(k))_{r+n} = 0$ for $k > p$ and  $1 \leq n \leq p$. Therefore equation \eqref{eq_admiss} is fulfiled. Therefore $\tau \in Partc(Adj(\sigma,tu,p),p,s)$.\\
Defining the function $g$ by 
\begin{equation}
g(b) := (endes(b),qa_{tu}(\sigma(b)))
\end{equation}
we have constructed a $g \in Sel(\tau)$. For $b$ with $endes(b) = \alpha$ the sequence of edges $(b,\sigma(b),\sigma(\sigma(b)),\ldots)$ is by construction identical to a sequence which constitutes a cyclical permutation $cyc_\alpha$. As a sequence of edges in $G_\alpha$ from the $g$-canonical representation of $\tau$ it  is a Eulerian trail on $G_{\alpha}$ which is uniquely defined up to cyclic permutation. The indices $\alpha$ given to the cycles in equation \eqref{eq_cycsig} uniquely determine those of the partition and the first component of $g$. Any of the $cycn(\sigma)!$ permutations of those indices belong to the same $\sigma$. But any of those index permuations leads to a permutation of the indices of the partition (which may or may not lead to a different partition) and definitely leads to a different selection $g$. So $\sigma$ fixes the partition up to a permutation of its elements and for a fixed partition the selection $g$ up to one of the $Sim(\tau)$ permutations of the indices $\beta$ which leave the partition unchanged. 
\end{proof}
\begin{lemma}\label{le_colsig}
Let $F \in MF(r,p,w)$ and $\tau \in Partc(F,p,s)$ and $tu = dia(F)$. Let a $g \in Sel(\tau)$ be given and let $Col(g,\tau) \neq \emptyset$. Let therefore a collection of Euler trails $col \in Col(g,\tau)$ be given. Then there is a uniquely defined $\sigma \in SF(tu,r,p)$ such that $Adj(\sigma,tu,p) = F$ and $endu(\sigma) = [(\tau,g,col)]$ 
\end{lemma}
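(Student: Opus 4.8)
The plan is to invert the correspondence of Lemma~\ref{le_sigcol}: from the concrete data $(\tau,g,col)$ I build a permutation $\sigma$ explicitly and then check that it lies in $SF(tu,r,p)$, has adjacency matrix $F$, maps back to the given class, and is the only permutation doing so. For the construction I first note that, since $g \in Sel(\tau)$ is a function, the edge sets $E_\alpha = \bigcup_k g^{-1}(\alpha,k)$ of the $g$-canonical graphs $G_\alpha$ partition $\{1,\ldots,R+p\}$. On each block $E_\alpha$ I let the given Euler trail $T_\alpha = (b_1(\alpha),\ldots,b_{\#E_\alpha}(\alpha))$ prescribe one cycle, setting $\sigma(b_h(\alpha)) := b_{h+1}(\alpha)$ for $1 \le h < \#E_\alpha$ and $\sigma(b_{\#E_\alpha}(\alpha)) := b_1(\alpha)$. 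Because the $E_\alpha$ are disjoint and exhaust the index set, this defines a genuine $\sigma \in S_{R+p}$ whose disjoint-cycle decomposition is exactly $(cyc_1,\ldots,cyc_s)$ with $cyc_\alpha$ equal to $T_\alpha$.

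The verification that $\sigma \in SF(tu,r,p)$ rests on one identity: consecutive edges of an Euler trail share a vertex. In the $g$-canonical representation $J_\alpha(b)_1 = qa_{tu}(b)$ and $J_\alpha(b)_2 = g(b)_2$, so $end(b)=st(\sigma(b))$ reads $g(b)_2 = qa_{tu}(\sigma(b))$. Since every $\tau(\alpha) \in MBaln(r+p)$ has zero diagonal, each $G_\alpha$ is loopfree, giving $qa_{tu}(b) \neq g(b)_2 = qa_{tu}(\sigma(b))$; this is loopfreeness of $\sigma$, and in particular $\sigma$ has no fixed points. Substituting $qa_{tu}(\sigma(k)) = g(k)_2$ into the definition of $Adj(\sigma,tu,p)_{i,j}$ and regrouping the edge count by cycle turns it into $\sum_\alpha \tau(\alpha)_{i,j} = F_{i,j}$, so $Adj(\sigma,tu,p)=F \in MF(r,p,1)$, which is precisely the connectedness condition. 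For admissibility (the case $p>0$) I invoke $p$-admissibility of $\tau$: each $\tau(\alpha)$ obeys $\sum_{k=r+1}^{r+p} dias(\tau(\alpha))_k \le 1$ while $dias(F)_{r+k}=1$, so each outer vertex $r+k$ sits in exactly one partition matrix and distinct outer vertices in distinct matrices; hence the unique edge $R+k$ starting at $r+k$ lies in a distinct cycle for each $k$. These $p$ cycles are element-disjoint, and each moves $R+k$ since $\sigma$ is fixed-point free, which is exactly what admissibility requires.

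It remains to recover the data and argue uniqueness. Running the forward map of Lemma~\ref{le_sigcol} on this $\sigma$ returns $(\tau,g,col)$: the disjoint cycles are the $T_\alpha$, the per-cycle counts $\#\{b_h(\alpha): qa_{tu}(b_h(\alpha))=i \wedge qa_{tu}(\sigma(b_h(\alpha)))=k\}$ collapse to $\tau(\alpha)_{i,k}$ via $qa_{tu}(\sigma(b))=g(b)_2$, the assignment $endes(b)$ coincides with $g(b)_1$, and therefore $(endes(b),qa_{tu}(\sigma(b)))=g(b)$. Thus $endu(\sigma)=[(\tau,g,col)]$. For uniqueness, any $\sigma'$ with $Adj(\sigma',tu,p)=F$ and $endu(\sigma')=[(\tau,g,col)]$ has a disjoint-cycle decomposition whose blocks are pinned down by $g$ and whose cyclic order on each block is pinned down by the trails in $col$; since the only freedom inside the equivalence class is a relabeling of the cycle indices, which does not change the underlying permutation, we must have $\sigma'=\sigma$.

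The step I expect to be the main obstacle is the admissibility check. The construction and the identities $qa_{tu}(\sigma(b))=g(b)_2$ and $Adj(\sigma,tu,p)=F$ are essentially bookkeeping with the definitions of $qa_{tu}$, the canonical representation and the cofactor/Euler-trail machinery; the one genuinely structural point is that the $p$-admissibility bound on $\tau$ is exactly strong enough to force the $p$ distinguished outer edges $R+1,\ldots,R+p$ into $p$ \emph{distinct} cycles, guaranteeing that the required family of element-disjoint cycles $cyc_k$ with $cyc_k(R+k)\neq R+k$ actually exists rather than several outer edges collapsing into a single cycle.
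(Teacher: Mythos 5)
Your proof is correct and follows essentially the same route as the paper's: you build $\sigma$ as the product of the cycles read off from the Euler trails on the $g$-canonical graphs, verify $Adj(\sigma,tu,p)=F$ via $\sum_\alpha \tau(\alpha)=F$, obtain admissibility from the bound \eqref{eq_admiss} forcing the edges $R+1,\ldots,R+p$ into distinct cycles, and recover $endu(\sigma)=[(\tau,g,col)]$ by the forward map of lemma \ref{le_sigcol}. Your explicit checks of loopfreeness and of uniqueness (the cycle supports pinned by $g$, the cyclic successor function pinned by the trails) merely spell out what the paper leaves implicit in ``by construction''.
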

\begin{proof}
On the $g$-canonical graphs $(G_1,\ldots,G_s)$ the Euler trails $(T_1,\ldots,T_s)$ define element disjoint sequences $(b_1(\alpha),\ldots,b_{n(\alpha)}(\alpha))$ up to cyclic permutation. Each such sequence can be interpreted as a cyclic permutation $cyc_\alpha$ in the form of lemma \ref{le_sigcol}.  We can then define 
\begin{equation}
\sigma := \prod_{\alpha = 1}^{s} cyc_\alpha
\end{equation}
Because of the admissibility condition for $\tau$  in equation \eqref{eq_admiss} the cycles in $\sigma$ fulfil (after a possible reordering of their indices) $cyc_{k}(R+ k) \neq R + k$ and because of equation \eqref{eq_tausum} $Adj(\sigma,tu,p) = F$ and therefore $\sigma \in SF(tu,r,p)$.
With the proof of lemma \ref{le_sigcol} we see that $\sigma$ by construction fulfils  $endu(\sigma) = [(\tau,g,col)]$.
\end{proof}
\begin{theorem}\label{th_weifp}
For a matrix $F \in MF(r,p,1)$ and $j \geq 1$ we define the following quantities 
\begin{equation}\label{eq_afpc}
a_{F,p,j} :=   \frac{1}{j!} \cdot 
\left( \sum_{\tau \in Partc(F,p,j)}  \varphi(\tau) \right)
\end{equation}
with
\begin{equation}\label{eq_weifptau}
\varphi(\tau) :=  Mud\left(\sum_{1 \leq \alpha \leq j} \tau(\alpha) \right) \cdot \chi(\tau)
\end{equation}
with 
\begin{equation}
\chi(\tau) := \prod_{1 \leq \alpha \leq j} \frac{Eul(\tau(\alpha))}{Mult(\tau(\alpha))}
\end{equation}
Then
\begin{equation}\label{eq_afp}
wei_F(m,p) := \sum_{j \geq 1} a_{F,p,j}\cdot m^{j -p}
\end{equation}
\end{theorem}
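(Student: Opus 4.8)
The plan is to prove the identity \eqref{eq_afp} by counting the permutations in $WS(F)$ graded by their number of cycles, and matching this count against the partition sum \eqref{eq_afpc}. Since $wei_\sigma(m,p) = m^{cycn(\sigma)-p}$ and $wei_F(m,p) = \sum_{\sigma \in WS(F)} wei_\sigma(m,p)$, I would first regroup the sum according to the value $j = cycn(\sigma)$, writing
\begin{equation*}
wei_F(m,p) = \sum_{j \geq 1} N_j \cdot m^{j-p}, \qquad N_j := \#\{\sigma \in WS(F): cycn(\sigma) = j\}.
\end{equation*}
Comparing with \eqref{eq_afp}, the whole theorem then reduces to the purely combinatorial identity $N_j = a_{F,p,j}$ for each $j \geq 1$.

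To compute $N_j$ I would introduce the set of ordered triples
\begin{equation*}
\mathcal{T}_j := \{(\tau,g,col): \tau \in Partc(F,p,j),\ g \in Sel(\tau),\ col \in Col(g,\tau)\}
\end{equation*}
and exploit Lemmas \ref{le_sigcol} and \ref{le_colsig}. Lemma \ref{le_colsig} provides a map $\Psi : \mathcal{T}_j \to \{\sigma \in WS(F): cycn(\sigma) = j\}$ sending a triple to the permutation obtained by reading the collection of Euler trails $col$ as the disjoint cycles of $\sigma$; Lemma \ref{le_sigcol} shows this map is surjective and pins down its fibres. The decisive point is that $\Psi$ is exactly $j!$-to-one: starting from a fixed $\sigma$ whose cycle decomposition has $j$ element disjoint cycles, the $j!$ ways of labelling these cycles with the indices $1,\ldots,j$ produce $j!$ ordered triples, all mapping under $\Psi$ to the same $\sigma$, and these exhaust $\Psi^{-1}(\sigma)$. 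Hence $|\mathcal{T}_j| = j! \cdot N_j$.

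Next I would evaluate $|\mathcal{T}_j|$ directly. Summing over triples and using that, by \eqref{eq_eulcol}, the count $\#Col(g,\tau) = \prod_{\alpha=1}^j Eul(\tau(\alpha))$ does not depend on the particular selection $g$, I obtain
\begin{equation*}
|\mathcal{T}_j| = \sum_{\tau \in Partc(F,p,j)} \#Sel(\tau) \cdot \prod_{\alpha=1}^j Eul(\tau(\alpha)).
\end{equation*}
Inserting $\#Sel(\tau) = Mud(F)\big/\prod_{\alpha=1}^j Mult(\tau(\alpha))$ from \eqref{eq_selnum} and using $F = \sum_{\alpha} \tau(\alpha)$ so that $Mud(F) = Mud(\sum_\alpha \tau(\alpha))$, the summand collapses to exactly $\varphi(\tau)$ as defined in \eqref{eq_weifptau}. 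This gives $|\mathcal{T}_j| = \sum_{\tau \in Partc(F,p,j)} \varphi(\tau)$, and dividing by $j!$ yields $N_j = a_{F,p,j}$, which is \eqref{eq_afpc}. Substituting this back into the graded sum completes the proof.

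I expect the main obstacle to be the verification that $\Psi$ is genuinely $j!$-to-one rather than merely $j!$-to-one \lq\lq up to symmetry\rq\rq. The worry is that when the partition $\tau$ contains repeated matrices $\tau(\alpha) = \tau(\beta)$, two distinct labellings of the cycles might collapse to the same ordered triple and cause overcounting (this is precisely the effect that a factor $Sym(\tau)$ would otherwise record). The resolution, already signalled in the proof of Lemma \ref{le_sigcol}, is that the selection $g$ retains the label of the cycle each edge originates from, so two different labellings always produce different first components of $g$ and hence genuinely distinct ordered triples, even for symmetric partitions. I would make this injectivity explicit, since it is exactly what guarantees the clean normalisation $1/j!$ and the absence of any residual $Sym(\tau)$ correction in \eqref{eq_afpc}.
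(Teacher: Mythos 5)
Your proposal is correct and follows essentially the same route as the paper's own proof: grading $WS(F)$ by cycle number, using Lemmata \ref{le_sigcol} and \ref{le_colsig} to set up the $j!$-to-one correspondence with ordered triples $(\tau,g,col)$, and evaluating the triple count via \eqref{eq_selnum} and \eqref{eq_eulcol}. Your explicit verification that distinct cycle labellings yield distinct selections $g$ (so no $Sym(\tau)$ correction arises over $Partc(F,p,j)$) is exactly the point the paper makes inside the proof of Lemma \ref{le_sigcol}, merely spelled out more carefully.
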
 
\begin{proof}
According to lemmata \ref{le_sigcol} and \ref{le_colsig} we know that $\sigma \in WS(F)$ induces a uniquely defined class $endu(\sigma) = [(\tau,g,col)]$. So to determine the $j$-th coefficient of $wei_F(m,p)$ as a polynomial in $m$ we look at a given $\tau \in Partc(F,p,j)$. In \eqref{eq_selnum} we have given the number of selections $g \in Sel(\tau)$. In \eqref{eq_eulcol} the number of collections $col \in Col(g,\tau)$ for a given $\tau$ and a given $g$ are given. As we have seen in the proof of lemma \ref{le_sigcol} the $j!$ permutations of the cycles of a given $\sigma$ lead to $j!$ different $(\tau,g,col)$ but belong to the same $\sigma$. Summing over all $\sigma \in WS(F)$ is therefore equivalent to summing over all $(\tau,g,col)$  and dividing by $j!$. Putting all this together we reach \eqref{eq_afp} with the coefficients given in \eqref{eq_afpc}.
\end{proof}
\begin{corollary}
With the proof of lemma \ref{le_sigcol} can also write alternatively
\begin{equation}\label{eq_weifp}
a_{F,p,j} := \sum_{\tau \in Part(F,p,j)} \psi(\tau)
\end{equation}
where 
\begin{equation}
\psi(\tau) := \frac{\varphi(\tau)}{Sym(\tau)}
\end{equation}
\end{corollary}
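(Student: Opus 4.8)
The plan is to derive the claimed identity directly from Theorem \ref{th_weifp} by passing from the ordered partitions in $Partc(F,p,j)$ to their equivalence classes in $Part(F,p,j)$; the entire argument rests on an orbit-counting observation combined with the invariance of $\varphi$ under reordering of the tuple.

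First I would check that the summand $\varphi(\tau)$ appearing in \eqref{eq_afpc} depends only on the equivalence class of $\tau$ in $Part(F,p,j)$, i.e.\ that it is invariant under permuting the entries of the $j$-tuple $\tau = (\tau(1),\ldots,\tau(j))$. This is immediate from the two factors in \eqref{eq_weifptau}: the factor $Mud\left(\sum_{1 \leq \alpha \leq j}\tau(\alpha)\right) = Mud(F)$ depends on the tuple only through the sum $\sum_\alpha \tau(\alpha) = F$, which is fixed, and the factor $\chi(\tau) = \prod_{1 \leq \alpha \leq j} Eul(\tau(\alpha))/Mult(\tau(\alpha))$ is a product over the entries and is therefore unchanged by any permutation of those entries. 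Consequently $\varphi$ descends to a well defined function on $Part(F,p,j)$, exactly as $\psi = \varphi/Sym$ does.

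Next I would count, for a fixed class $\tau \in Part(F,p,j)$, how many ordered representatives it possesses in $Partc(F,p,j)$. Writing the class through its distinct matrices $\tau^{(1)},\ldots,\tau^{(q)}$ occurring with multiplicities $m_1,\ldots,m_q$ (so that $\sum_k m_k = j$), I would note that $S_j$ acts on the positions of the tuple, and the stabilizer of any ordered representative consists precisely of those permutations that only move positions carrying equal matrices; since the $\tau^{(k)}$ are pairwise distinct, that stabilizer is $\prod_k S_{m_k}$, of order $\prod_{k=1}^q m_k! = Sym(\tau)$. By orbit–stabilizer the number of distinct ordered representatives is then the multinomial coefficient $j!/(m_1!\cdots m_q!) = j!/Sym(\tau)$.

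Finally I would reorganize the sum in \eqref{eq_afpc} by grouping the ordered tuples according to their class. Using the invariance of $\varphi$ from the first step and the count from the second,
\begin{equation}
\sum_{\tau \in Partc(F,p,j)} \varphi(\tau) = \sum_{\tau \in Part(F,p,j)} \frac{j!}{Sym(\tau)}\cdot \varphi(\tau),
\end{equation}
so dividing by $j!$ converts \eqref{eq_afpc} into $a_{F,p,j} = \sum_{\tau \in Part(F,p,j)} \varphi(\tau)/Sym(\tau) = \sum_{\tau \in Part(F,p,j)} \psi(\tau)$, which is \eqref{eq_weifp}. There is no genuine obstacle here — the statement is a bookkeeping consequence of Theorem \ref{th_weifp} — and the only point demanding slight care is confirming that the stabilizer order equals $Sym(\tau)$ rather than a larger group mixing genuinely distinct matrices, which is guaranteed by choosing the $\tau^{(k)}$ to be pairwise distinct.
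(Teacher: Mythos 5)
Your proposal is correct and follows essentially the same route the paper intends: the paper leaves the corollary as an immediate consequence of the counting already done in the proof of Lemma \ref{le_sigcol}, namely that each class in $Part(F,p,j)$ has exactly $j!/Sym(\tau)$ ordered representatives in $Partc(F,p,j)$ while $\varphi$ is invariant under reordering. Your explicit orbit--stabilizer verification merely spells out the bookkeeping the paper takes for granted, so there is nothing to add.
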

\begin{remark}
It is clear that for $F \in MF(r,p,1)$  
\begin{equation}\label{eq_weigp}
wei_F(m,p) = wei_{F^{\sigma}}(m,p)
\end{equation}
for all $\sigma \in S_r(p)$ (see definition \ref{de_gr}) because the the permuation of the indices can be applied to each element of a partition.
\end{remark}
\begin{lemma}\label{le_surj}
Let $F \in MF(r,p,w)$ with $p=0$ or $p=1$. Then $WS(F)\neq \emptyset$.
\end{lemma}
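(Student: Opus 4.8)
The plan is to exhibit a single explicit permutation in $WS(F)$ by invoking lemma \ref{le_colsig}, applied to the trivial one-element partition. First I would set $tu := dia(F)$ and consider $\tau := (F)$, i.e.\ the ordered tuple of length $s=1$ whose only entry is $F$ itself. To feed lemma \ref{le_colsig} I must check that $\tau \in Partc(F,p,1)$: the equality $\sum_{1\le\alpha\le 1}\tau(\alpha)=F$ is trivial, and the $p$-admissibility condition \eqref{eq_admiss} reads $\sum_{k=r+1}^{r+p} dias(F)_k \le 1$, which holds precisely because $p \le 1$ (for $p=0$ the sum is empty, for $p=1$ it equals $dias(F)_{r+1}=1$). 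This is exactly where the hypothesis $p\in\{0,1\}$ enters, and it is the reason the statement is restricted: for $p\ge 2$ the one-element partition already violates \eqref{eq_admiss}, so a multi-part partition would be forced and need not be available.

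Next I would verify the two nonemptiness conditions required by lemma \ref{le_colsig}. Since $F$ is balanced, a selection exists: by \eqref{eq_selnum} one has $\#Sel(\tau)=Mud(F)/Mult(F)>0$, so I may fix some $g\in Sel(\tau)$. Its $g$-canonical graph is the single graph $G_1=GK(F)=G(F)$, with no isolated vertices since $dias(F)_k\ge w\ge 1$ for $k\le r$ and $dias(F)_{r+1}=1$. Because $F\in MF(r,p,w)$ satisfies $cof(Diag(F)-F)\neq 0$, lemma \ref{le_coneul} shows $G(F)$ is connected, and being balanced it is therefore Eulerian; hence by \eqref{eq_eulcol} $\#Col(g,\tau)=Eul(F)>0$, and I may fix a collection of Euler trails $col\in Col(g,\tau)$.

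With $\tau$, $g$ and $col$ in hand, lemma \ref{le_colsig} produces a uniquely determined $\sigma\in SF(tu,r,p)$ with $Adj(\sigma,tu,p)=F$, so by the definition of the weight set $\sigma\in WS(F)$ and $WS(F)\neq\emptyset$. The construction makes the mechanism transparent: one closed Euler tour of $G(F)$ is read off as a single cycle $cyc_1$ on the edge labels $\{1,\dots,R+p\}$, and for $p=1$ this cycle automatically meets the admissibility requirement $cyc_1(R+1)\neq R+1$, since it has length $R+1\ge 2$ and contains the outer edge. I do not expect a genuine obstacle here; the only load-bearing step is the admissibility check of the trivial partition, which is precisely where $p\le 1$ is used. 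As an alternative route leading to the same conclusion, one may appeal directly to Theorem \ref{th_weifp}: the $j=1$ coefficient is $a_{F,p,1}=\varphi((F))=Mud(F)\,Eul(F)/Mult(F)>0$, so $wei_F(m,p)=\sum_{\sigma\in WS(F)} m^{cycn(\sigma)-p}$ is a nonzero polynomial and $WS(F)$ cannot be empty.
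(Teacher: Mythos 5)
Your proof is correct and follows essentially the same route as the paper: take the trivial one-element partition $\tau=(F)$, observe it lies in $Partc(F,p,1)$ (which is where $p\le 1$ enters via condition \eqref{eq_admiss}), note $\#Sel(\tau)>0$ by \eqref{eq_selnum} and $Eul(F)\neq 0$ by \eqref{eq_cofmf} together with lemma \ref{le_coneul}, and then invoke lemma \ref{le_colsig} to produce $\sigma\in WS(F)$. You are in fact somewhat more explicit than the paper's own three-line proof — in particular about why admissibility forces the restriction to $p\in\{0,1\}$ and why the Euler-trail count is nonzero — and your closing alternative via the positivity of $a_{F,p,1}$ in Theorem \ref{th_weifp} is a valid repackaging of the same argument.
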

\begin{proof}
For $F \in MF(r,p,w)$ with $p=0$ or $p=1$ we know that the partition $\tau = (F)$ fulfils $\tau \in Part(F,p,1)$ and therefore $Eul(\tau) \neq 0$. With these data we know that $\#Sel(\tau) \neq 0$ according to equation \eqref{eq_selnum}.  But then because of lemma \ref{le_colsig} we have proved the existence of a $\sigma \in WS(F)$. 
\end{proof}



\section{The Connection}
In this section we will bring the basic concepts of the last section together with standard perturbation theory. We will analyze the coefficients of the so called proper functions $\Gamma^{[0]}$ and $\Gamma^{[2]}$ to make the crucial connections to the asymptotic  moments of the multiple point range of the closed and the unrestricted planar random walk. 
\subsection{Perturbation Theory}
In this paper we will consider a general massive  field theory with complex fields $\Phi_{\alpha}(x)$ with ($\alpha = 1,\ldots,m$) and a Lagrangian density
\begin{equation}\label{eq_lagra1}
\mathcal{L}(x) = \mathcal{L}_{free}(x) + \mathcal{L}_{int}(x) 
\end{equation}
with 
\begin{equation}
\mathcal{L}_{free}(x) := \partial_{\mu}\Phi^{\dagger}(x)\partial^{\mu}\Phi(x) + \lambda_1\cdot\left(\Phi(x)^{\dagger}\Phi(x)\right)
\end{equation}
\begin{equation}\label{eq_lagra2}
\mathcal{L}_{int}(x) := \sum_{w > 1} \lambda_w \cdot \left(\Phi^{\dagger}(x)\Phi(x)\right)^{2w}
\end{equation}
(where $\lambda_1 \in \C\setminus\{0\}$ and for $w > 1$ at least one but at most finitely many of the $\lambda_w \in \C$ are nonzero) in $d$ dimensions. Perturbation theory in the $r^{th}$ order is mathematically well defined in terms of Feynman integrals and weight factors. To motivate them 
let us define 
\begin{equation}
\ell_{int,w}(x) := 
\left(\Phi^{\dagger}(x)\Phi(x)\right)^{2w}
\end{equation}
Then for $r \geq 0$ the $r$th expansion term of the standard exponential of field theory  (see e.g. \citep[ch. 6]{itzykson}, \citep[ch. 2 - 5]{phi4}) is given by 
\begin{equation}
Exp_r := \frac{(-1)^r \cdot \prod_{i=1}^{r} \mathcal{L}_{int}(x_i)}{r!}
\end{equation}
Using definition \eqref{de_tup} we then notice by simple algebra
\begin{equation}
Exp_r := \frac{1}{r!}\sum_{(d_1,\ldots,d_r) \in Tup(r,2)} \left(\prod_{1 \leq k \leq r}  \left(-\lambda_{d_k} \right)\right) \cdot\left(\prod_{1 \leq k \leq r} \ell_{int,d_k}(x_k) \right)
\end{equation} 
In perturbation theory the following objects (called $r^{th}$ order contribution to the $2p$-point Greensfunctions) are studied: 
\begin{multline}
G^{[2p],pt}_{r}(y_1,z_1\ldots,y_{p},z_p)_{\beta_1,\gamma_1,\ldots,\beta_p,\gamma_p} := \\ 
\left\langle\left\langle  \Phi^{*}_{\beta_1}\left(y_1 \right) \cdot \Phi_{\gamma_1}\left(z_1\right) \cdot  \ldots \cdot \Phi^{*}_{\beta_{p}}\left(y_p\right)\cdot \Phi_{\gamma_{p}}\left(z_{p}\right) \cdot Exp_r \right\rangle_{WI} \right\rangle_{I}
\end{multline} 
where $\left\langle \right\rangle_{WI}$ stands for the Wick/Isserli contractions using the Gaussian distribution of the free field theory represented by the $\mathcal{L}_{free}(x)$ and $\left\langle \right\rangle_{I}$ stands for appropriate integrations over a subset of the variables $x_1,\ldots ,x_r$: For each connection component one variable in it is set to $0$ and we integrate the others over $\R^d$.\\ Because of the form of the free Lagrangian we know that $G^{[2p],pt}_{r}$ has a specific behaviour under permutatios $\pi \in S_p$. 
\begin{multline}\label{eq_inddelta}
G^{[2p],pt}_{r}(y_1,z_1\ldots,y_{p},z_p)_{\beta_1,\gamma_1,\ldots,\beta_p,\gamma_p} = \\ G^{[2p],pt,par}_{r}(y_1,z_1\ldots,y_{p},z_p)\cdot \left( \sum_{\pi \in S_p} \prod_{i=1}^p\delta_{\beta_i,\gamma_{\pi(i)}}\right)
\end{multline}
For general $m$ it therefore suffices to consider the case where the following conditions are met: 
\begin{equation}\label{eq_bg}
\beta_j = \gamma_j
\end{equation}
\begin{equation}\label{eq_bb}
 \beta_j \neq \beta_k \Leftarrow j \neq k
\end{equation}
for all $j,k=1,\ldots,p$  in the sequel.
For a given tupel $tu := (d_1,\ldots,d_r) \in Tup(r,2)$ with $Sum(tu) = R$ let us define 
$\alpha_{R+k} := \beta_k$ and $x_{r+k} := y_k$ and $x_{r+p+k} := z_k$ for $k=1,\ldots,p$.
We now note that we can write 
\begin{multline}\label{eq_phialp}
 \Phi^{*}_{\beta_1}\left(y_1 \right) \cdot \Phi_{\beta_1}\left(z_1\right) \cdot  \ldots \cdot \Phi^{*}_{\beta_{p}}\left(y_{p}\right)\cdot \Phi_{\beta_{p}}\left(z_p\right) \cdot \prod_{1 \leq k \leq r} 
\ell_{int,d_k}(x_k) = \\
\sum_{1 \leq \alpha_{q} \leq m} \left(\prod_{1 \leq j \leq R + p} \Phi^{*}_{\alpha_j}(x_{qa_{tu}(j)})\Phi_{\alpha_j}(x_{qb_{tu}(j)}) \right)
\end{multline}
where the variables $\alpha_q$ for $1 \leq q \leq R$ over which the right hand side of \eqref{eq_phialp} is summed belong to the pairs $\Phi^{*}_{\alpha_q}(x_q)\cdot \Phi_{\alpha_q}(x_q)$ in $\prod_{1 \leq k \leq r} 
\ell_{int,d_k}(x_k)$. We have used the functions $qa_{tu}$ and $qb_{tu}$ given in definition \ref{de_qaqb}.
So we have an expression which is a sum over monomials in $\Phi^{*}\Phi$ of degree $R + p$. The Wick/Isserli contractions of these  monomials can now be expressed in terms of permutations  $\sigma \in S_{R + p}$ \cite[eq. 1.22]{zinn_justin}: 
\begin{multline}\label{eq_wi}
\left\langle \prod_{1 \leq j \leq R + p} \Phi^{*}_{\alpha_j}(x_{qa_{tu}(j)})\Phi_{\alpha_j}(x_{qb_{tu}(j)}) \right\rangle_{WI} = \\ \sum_{\sigma \in S_{R+p}} \prod_{1 \leq j \leq R + p} \left\langle \Phi^{*}_{\alpha_j}(x_{qa_{tu}(j)}) \Phi_{\alpha_{\sigma(j)}}(x_{qb_{tu}(\sigma(j))}) \right\rangle_{WI}
\end{multline}
where $\left\langle \right\rangle_{WI}$ stands for the Wick/Isserli contractions. So we have reduced the higher moments to sums of products of second moments. For the second moments we know from e.g. \cite[eq. 25]{psp} that for $x_i \neq x_j$
\begin{multline}\label{eq_propp}
\left\langle \Phi(x_i)_\alpha \Phi^{*}(x_j)_\beta \right\rangle_{WI} = \delta_{\alpha,\beta} \cdot Gre(d,\left| x_i - x_j\right|) :=\\ 
\delta_{\alpha,\beta}\cdot \left( \frac{\lambda_1}{u^2}\right)^{\frac{d-2}{4}} \cdot \frac{1}{(2\pi)^{\frac{d}{2}}} \cdot K_{\frac{d-2}{2}}\left( \sqrt{\lambda_1\cdot u^2}\right)\Biggr|_{u = \left| x_i - x_j\right|}
\end{multline}
where $K_{\nu}$ here denotes the modified Bessel function as in \cite[ch.9.6]{danraf}.\\
In this article we are only interested in those permutations $\sigma$ which are admissible and lead to connected loopfree multigraphs because they turn out to be the building blocks of quantum field theory.
Let us discuss shortly the reason for the above restriction:\\
Loopfreeness (no tadpoles) is a widespread demand in quantum field theory which is somewhat ad hoc \citep[p. 271]{itzykson} and related to a change of the parameter $\lambda_1$ which physicists call mass renormalization. In this article in conjunction with \citep[section 7.3]{dhoefc} a mathematical reason is given for the exclusion of tadpole diagrams at this level. Graphs with tadpoles in the evaluation of the moments of the distribution of the multiple point range  are related to factors of the first moment in higher noncentralized moments. Their subtraction (which mathematicians call renormalization) to get to the centralized moments leads to the factors related to the Borel transform to be discussed in section 4 and 5 of this article. This crucial information is lost in standard quantum field theory. The standard Borel transform used to resum the perturbation expansion comes from a general reasoning about the growth rate of the perturbation series (see e.g. \citep[ch. 9.4.2]{itzykson}). This article puts together those two loose ends.\\   
Connected and unconnected graphs on the other hand are related to each other by exponentiation \citep[section 3.3]{phi4}.\\ 
Admissibility is related to the factor $\delta_{\alpha,\beta}$ in the propagators \eqref{eq_propp} and the conditions in equations  \eqref{eq_bg} and \eqref{eq_bb}. Because of these equations the permutations $\sigma$ contributing to  $G^{[2p],pt,par}_{r,c}$ must induce $p$ seperate edge trails on $G(\sigma)$ each one containing a different outer vertex $r+k$ for $k=1,\ldots,p$.\\ With definition \ref{de_adcon} 
the above three conditions --loopfreeness, connectedness and admissibility-- are equivalent to $\sigma \in SF(tu,r,p)$  \\
\begin{definition}
We therefore for $p \geq 0$ define the $r^{th}$ contribution to the so called connected (subscript $c$) $2p$ Greensfunction
for $r=0$ by
\begin{equation}
G^{[2],pt,par}_{0,c}(x_i,x_j) = Gre\left(d,\left|x_i - x_j\right|\right)
\end{equation}
and  for $r > 0$ by
\begin{multline}\label{eq_g2px}
G^{[2p],pt,par}_{r,c}(y_1,z_1,\ldots,y_p,z_p) := \\ 
\frac{1}{r!}\Biggl(\sum_{tu \in Tup(r,2)}\Biggl(\sum_{\sigma \in SF(tu,r,p)}\Biggl(\sum_{1 \leq \alpha_q \leq m} 
 \\
\Pi_{tu}(\lambda) \cdot \left\langle \prod_{1 \leq j \leq R + p} \left\langle \Phi^{*}_{\alpha_j}(x_{qa_{tu}(j)}) \Phi_{\alpha_{\sigma(j)}}(x_{qb_{tu}(\sigma(j))}) \right\rangle_{WI}\right\rangle_{I}\Biggr)\Biggr)\Biggr)
\end{multline} 
where $R = Sum(tu)$ and the integration $\langle \rangle_I$ is over the internal space variables $x_1,\ldots,x_R$ for $p > 0$ and over $x_2,\ldots,x_R$ and $x_1 :=0$ for $p = 0$  and the summation over $\alpha_q$ is for all $1\leq q \leq R$. For $p=0$ of course no variables $y_i,z_i$ exist. 
\end{definition}
Now for each $p>0$ the function  $G^{[2p],pt,par}_{r,c}(P,\lambda)$ is defined as the Fourier transform of
\begin{equation}
G^{[2p],pt,par}_{r,c}(y_1,z_1\ldots,y_{p},z_p)
\end{equation} 
where the so called \lq\lq external momenta\rq\rq $P$ denotes collectively the variables conjugated to $y_1,z_1,\ldots,y_p,z_p$.  
In this article we will only work with $P=0$ i.e. all \lq\lq external momenta\rq\rq\ are set to $0$ and so the Fourier integral reduces to an integral over the external \lq\lq space\rq\rq\ variables. In the sequel we will generally drop the dependency on $P$.
\begin{definition}
For $r > 0$ and $p>0$ we can now define 
the $r^{th}$ order contribution to the connected $2p$-point function with zero external momenta $P$ by
\begin{multline} \label{eq_gcst}
G^{[2p],pt,par}_{r,c}(P = 0,\lambda) := \\ \int dz_1 \int \prod_{k=2}^{p} d^dy_k\cdot d^dz_k \cdot G^{[2p],pt,par}_{r,c}(y_1,z_1\ldots,y_{p},z_p)\Biggr|_{y_1 = 0}
\end{multline}
For $r>0$ and $p=0$ we define $G_{r,c}^{[0],pt,par}(\lambda)$ to be the left hand side of equation \eqref{eq_g2px} for $p=0$. For  $r = 0$ and $p=0$ we define $G_{0,c}^{[0],pt,par}(\lambda) = 0$
\end{definition}
To evaluate $G^{[2p],pt,par}_{r,c}(\lambda)$ we define: 
\begin{definition}
Let $r > 0$ and $tu \in Tup(r,2)$ and $\sigma \in SF(tu,r,p)$ and $cycn(\sigma)$ denote the number of its cycles. Then we define the so called Feynman contribution $FC$ of this permutation $\sigma$ 
\begin{multline}
FC(\sigma,tu,p,\lambda) := \\ \Pi_{tu}(\lambda) \cdot m^{cycn(\sigma) - p} \cdot \int \prod_{k=2}^{r + 2p} d^dx_k  \prod_{1 \leq i \neq j \leq r + 2p} Gre(d,\left|x_i - x_j\right|)^{Adjo(\sigma,tu,p)_{i,j}}\Biggr|_{x_1=0}
\end{multline}
\end{definition}

As is well known from the so called Feynman parametrization \cite[ch.8]{itzykson} 
\begin{multline}\label{eq_igcst}
\int \prod_{k=2}^{r + 2p} d^dx_k  \prod_{1 \leq i,j \leq r + 2p} Gre(d,\left|x_i - x_j\right|)^{Adjo(\sigma,tu,p)_{i,j}}\Biggr|_{x_1=0} = \\ FM_{Pr(G)}(\lambda_1) \cdot I_{Pr(G)}(d)
\end{multline}
(with $I_{Pr(G)}(d)$ from definition \ref{de_feyg}) where the graph $G:= G(\sigma)$ was defined in \ref{de_gsigma}. We have used that $Adjo(\sigma,tu,p) = Adj(Pr(G(\sigma)))$. So we can write:
\begin{equation}
FC(\sigma,tu,p,\lambda) := \\ \Pi_{tu}(\lambda) \cdot wei_{\sigma}(m,p) \cdot FM_{Pr(G(\sigma))}(\lambda_1) \cdot I_{Pr(G(\sigma))}(d)
\end{equation}
Realizing that for a given $\sigma \in SF(tu,r,p)$ it is exactly the $cycn(\sigma) - p $ cycles of $\sigma$ which do not contain an $R + k$ for $1 \leq k \leq p$ in it which contribute a factor $m$ when the summation over the $\alpha_q$ in equation \eqref{eq_g2px} is done we find
\begin{equation}\label{eq_g2psig}
G^{[2p],pt,par}_{r,c}(\lambda) = 
  \frac{1}{r!}\left(\sum_{tu \in Tup(r,2)} \left(
\sum_{\sigma \in SF(tu,r,p)} FC(\sigma,tu,p,\lambda) \right) \right)
\end{equation}
With theorem \ref{th_weifp} we have found a way of writing the $r^{th}$ order contribution of the $2p$ point function in terms of Feyman contributions summed over matrices $F \in MF(r,p,2)$. 

\begin{lemma}
For $r > 0$ 
\begin{equation}
G^{[2p],pt,par}_{r,c}(\lambda) = \frac{1}{r!} \sum_{F \in MF(r,p,2)} FC(F,\lambda)
\end{equation}
with the so called Feynman contribution 
\begin{equation}
FC(F,\lambda) := wei_F(m,p) \cdot \Pi_F(\lambda) \cdot  FM_{Pr(G(F))}(\lambda_1)\cdot I_{Pr(G(F))}(d)
\end{equation}
\end{lemma}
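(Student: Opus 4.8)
The plan is to start from the representation \eqref{eq_g2psig}, in which $G^{[2p],pt,par}_{r,c}(\lambda)$ appears as a sum over pairs $(tu,\sigma)$ with $tu \in Tup(r,2)$ and $\sigma \in SF(tu,r,p)$, and to regroup this double sum according to the adjacency matrix $F := Adj(\sigma,tu,p)$. First I would check that every such pair produces a matrix $F \in MF(r,p,2)$: connectedness and admissibility of $\sigma$ give $F \in MF(r,p,1)$ by definition \ref{de_adcon} (in particular the cofactor condition \eqref{eq_cofmf} holds), while $tu \in Tup(r,2)$ forces $dias(F)_k = d_k \geq 2$ for $1 \leq k \leq r$, so the sharper bound $w=2$ is met. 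The remark following definition \ref{de_adcon} moreover shows $dias(F)_q = d_q$ for $q \leq r$, i.e. $tu = dia(F)$; hence $tu$ is recovered from $F$, and the fibre of the map $(tu,\sigma) \mapsto F$ over a given $F$ is exactly $\{(dia(F),\sigma): \sigma \in WS(F)\}$, since $WS(F)$ is by definition the set of $\sigma$ with $Adj(\sigma,dia(F),p) = F$.

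Next I would observe that all factors of $FC(\sigma,tu,p,\lambda)$ other than the weight depend on the pair only through $F$. Indeed $\Pi_{tu}(\lambda) = \prod_{k=1}^r(-\lambda_{d_k}) = \prod_{k=1}^r(-\lambda_{dias(F)_k}) = \Pi_F(\lambda)$ because $tu = dia(F)$, and every $\sigma \in WS(F)$ yields the same graph $G(\sigma)$ up to isomorphism (all share the adjacency matrix $F$, so $G(\sigma) \cong G(F)$), whence $Pr(G(\sigma)) \cong Pr(G(F))$ and the isomorphism invariants $FM_{Pr(G(F))}(\lambda_1)$ and $I_{Pr(G(F))}(d)$ are common to the whole fibre. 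These three factors may therefore be pulled out of the inner sum over $\sigma \in WS(F)$.

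With those factors extracted, the only remaining $\sigma$-dependence is $wei_\sigma(m,p) = m^{cycn(\sigma)-p}$, and summing over the fibre gives $\sum_{\sigma \in WS(F)} wei_\sigma(m,p) = wei_F(m,p)$ straight from the definition of the weight polynomial (its explicit partition form, if wanted, being supplied by theorem \ref{th_weifp}). This reassembles the prefactors into $wei_F(m,p)\cdot \Pi_F(\lambda)\cdot FM_{Pr(G(F))}(\lambda_1)\cdot I_{Pr(G(F))}(d) = FC(F,\lambda)$. Finally, letting $F$ range over all of $MF(r,p,2)$ is harmless even though $(tu,\sigma)\mapsto F$ need not be surjective: any $F$ outside the image has $WS(F) = \emptyset$, hence $wei_F(m,p) = 0$, so such terms drop out (for $p \leq 1$ surjectivity in fact holds by lemma \ref{le_surj}, but it is not needed here). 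Dividing by $r!$ then gives the claimed identity.

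I expect the reorganization to be essentially bookkeeping; the one point deserving care is the invariance argument of the second paragraph --- verifying that the graph-dependent data $Pr(G(\sigma))$, and thus $FM$ and $I$, are genuinely constant along each fibre $WS(F)$ and that $tu$ is correctly recovered as $dia(F)$ --- since this is precisely what legitimizes factoring everything except the weight out of the inner sum.
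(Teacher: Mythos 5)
Your proposal is correct and follows essentially the same route as the paper, whose proof is simply the observation that the lemma follows from \eqref{eq_g2psig} and \eqref{eq_igcst}: regroup the double sum over $(tu,\sigma)$ by the adjacency matrix $F = Adj(\sigma,tu,p)$, pull out the factors that depend only on $F$, and sum the weights $wei_\sigma(m,p)$ over the fibre $WS(F)$ to obtain $wei_F(m,p)$. Your explicit checks --- that $tu = dia(F)$ is recovered from $F$, that $G(\sigma) \cong G(F)$ makes $FM$ and $I$ constant along each fibre, and that matrices with $WS(F) = \emptyset$ contribute zero --- are exactly the bookkeeping the paper leaves implicit.
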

\begin{proof}
The lemma follows from equations \eqref{eq_g2psig} and \eqref{eq_igcst}.
\end{proof}
\begin{corollary}
For $r > 0$
\begin{equation}\label{eq_g2d}
G^{[2p],pt,par}_r(\lambda) = \frac{1}{r!}\sum_{G \in GF(r,p,2)} FC(G,\lambda)
\end{equation}
where now the Feynman contribution of the graph $G$ is given by
\begin{equation}\label{fc_graph}
FC(G,\lambda) := gr(G) \cdot wei_G(m,p) \cdot \Pi_G(\lambda)  \cdot FM_{Pr(G)}(\lambda_1) \cdot I_{Pr(G)}(d)
\end{equation}
which is the standard way of writing the contribution which field theorists in physics use. The definitions of so called symmetry factors for graphs and weight polynomials vary in the literature. In our approach we have used a nomenclatura for $gr(G)$ and $wei_G(m,p)$ which is motivated by the two stages of refinement of graphs by vertex and edge numbering. It is the suitable choice to make a point of connection to random walk results.
\end{corollary}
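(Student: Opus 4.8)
The plan is to \emph{regroup} the matrix sum supplied by the preceding lemma according to the graph each matrix represents. By that lemma,
\[
G^{[2p],pt,par}_{r,c}(\lambda) = \frac{1}{r!} \sum_{F \in MF(r,p,2)} FC(F,\lambda),
\]
and since $MF(r,p,2)$ consists only of connected matrices (each $F$ has $cof(Diag(F)-F)\neq 0$, so $G(F)$ is connected by Lemma \ref{le_coneul}), this already equals the quantity $G^{[2p],pt,par}_r(\lambda)$ of the statement, the subscript $c$ being suppressed because the graph class $GF(r,p,2)\subset GC(r+p)$ is connected throughout. It therefore suffices to collect the matrices representing one and the same graph and sum their (identical) contributions.

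First I would partition $MF(r,p,2)$ into the orbits $WM(F)=\{F^{\sigma}:\sigma\in S_r(p)\}$ of the relabelling action $F\mapsto F^{\sigma}$. The earlier matrix/graph correspondence lemma (giving $Adj(G)\in MF(r,p,w)$ for $G\in GF(r,p,w)$ and $G(F)\in GF(r,p,w)$ for $F\in MF(r,p,w)$), together with the remark that two admissible numberings of a graph differ by some $\sigma\in S_r(p)$, shows that these orbits are in bijection with the isomorphism classes of $GF(r,p,2)$: each $F$ satisfies $F=Adj(G(F))$ under the canonical map, so $F$ lies in the orbit attached to $G(F)$, while nonisomorphic graphs yield disjoint orbits. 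By \eqref{eq_syfg} the orbit belonging to $G$ has cardinality $gr(G)=r!\,p!/Syf(G)$.

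Next I would verify that $FC(F,\lambda)$ is constant along each orbit. For the weight this is \eqref{eq_weigp}, which gives $wei_{F^{\sigma}}(m,p)=wei_F(m,p)=:wei_G(m,p)$; the coupling factor satisfies $\Pi_{F^{\sigma}}(\lambda)=\Pi_F(\lambda)=\Pi_G(\lambda)$ because $\Pi$ depends only on the multiset $\{dias(F)_k\}$, which relabelling preserves; and both $FM_{Pr(G(F^{\sigma}))}(\lambda_1)$ and $I_{Pr(G(F^{\sigma}))}(d)$ depend only on the isomorphism type of $Pr(G(F))$, hence equal $FM_{Pr(G)}(\lambda_1)$ and $I_{Pr(G)}(d)$. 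Consequently the orbit of $G$ contributes $gr(G)\cdot wei_G(m,p)\cdot\Pi_G(\lambda)\cdot FM_{Pr(G)}(\lambda_1)\cdot I_{Pr(G)}(d)$, which is exactly $FC(G,\lambda)$ as defined in \eqref{fc_graph}. Summing over orbits then turns $\sum_{F\in MF(r,p,2)}FC(F,\lambda)$ into $\sum_{G\in GF(r,p,2)}FC(G,\lambda)$, proving the claim.

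The factorwise invariance is routine; the one point that deserves genuine care is the orbit--isomorphism bijection, namely that $WM(F)$ is \emph{precisely} the set of admissible adjacency matrices of $G(F)$ and that \eqref{eq_syfg} correctly folds any internal symmetry of $G$ into the single symmetry number $Syf(G)$. As all of this was established in the matrix and permutation subsections, no new analytic difficulty arises; the only remaining subtlety is bookkeeping the stabiliser $Fix(F)$ against the full relabelling group $S_r(p)$, which Lagrange's identity $gr(F)\cdot Syf(F)=r!\,p!$ handles cleanly.
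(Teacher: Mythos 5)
Your proposal is correct and is exactly the argument the paper leaves implicit: the corollary is stated without proof because it follows from the preceding lemma by collecting the matrices $F \in MF(r,p,2)$ into $S_r(p)$-orbits of size $gr(G)$ (equation \eqref{eq_syfg}), using the orbit-invariance of the weight (equation \eqref{eq_weigp}), of $\Pi_F(\lambda)$, and of the graph-isomorphism invariants $FM_{Pr(G)}$ and $I_{Pr(G)}$. Your write-up simply makes this intended regrouping and the suppressed subscript $c$ explicit, which matches the paper's reasoning.
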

\begin{definition}
For $r > 0$ and $G \in GF(r,p,w)$ we also define the direct Feynman contribution where the integral now is over the graph $G$ itself
\begin{equation}\label{dfc_graph}
FCD(G,\lambda) := gr(G) \cdot wei_G(m,p) \cdot \Pi_G(\lambda)  \cdot FM_G(\lambda_1) \cdot I_{G}(d)
\end{equation}
and the so called truncated Feynman contribution 
\begin{equation}
FCT(G,\lambda) := \lambda_1^{2p} \cdot FC(G,\lambda)
\end{equation}
(assuming, as always in this article, that all so called \lq\lq external momenta\rq\rq\ $P$ are zero).
\end{definition}
For the later subsections we also define
\begin{definition}
For $r > 0$ 
\begin{equation}\label{eq_g2uc}
G^{[2],pt,par}_{r,c}(\lambda) =  G^{[C]}_r(\lambda) +  G^{[U]}_r(\lambda)
\end{equation}
where 
\begin{equation}\label{eq_gf2c}
G^{[C]}_r(\lambda) = \frac{1}{r!}\sum_{G \in GFC(r,1,2)} FC(G,\lambda)
\end{equation}
and 
\begin{equation}
G^{[U]}_r(\lambda) = \frac{1}{r!}\sum_{G \in GFU(r,1,2)} FC(G,\lambda)
\end{equation}
Note: For $r = 0$ 
\begin{equation}
G^{[2],pt,par}_{0,c}(\lambda) =  \frac{1}{\lambda_1} 
\end{equation}
\end{definition}
We also introduce $\Gamma^{[2]}$, the proper two point function and prove a major relationship of it to $G^{[2]}$. 
\begin{definition}\label{de_1pi}
By $GFCP(r,1,w)$ we denote the subset of $GFC(r,1,w)$ such that $G \in GFCP(r,1,w) \Rightarrow EC(Amp(G)) \geq 2$, i.e. the one particle irreducible (1PI) graphs in $GFC(r,1,w)$.
\end{definition}
\begin{remark}
We already know that $G \in GFU(r,1,w) \Rightarrow EC(Amp(G)) \geq 2$ because $Amp(G)$ in this case is Eulerian.
\end{remark}
With definition \ref{de_1pi} and definition \ref{de_psfpef} we can now formulate the two point proper vertex function.
Equivalent to \cite[eq. 4.33]{phi4} we define:
\begin{definition}
\begin{equation}\label{eq_gamma2d}
\Gamma^{[2],pt}_r (P =0,\lambda) = \lambda_1\cdot \delta_{r,0} -  \Sigma_r(P=0,\lambda)
\end{equation}
(in the sequel we will drop the dependency on P). For $r=0$ we define $\Sigma_0(\lambda) := 0$. For $r > 0$ the so called selfenergy
\begin{equation}
\Sigma_r(\lambda) := \Sigma^{[U]}_{r} (\lambda) + \Sigma^{[C]}_r(\lambda)
\end{equation}
with the contribution from the unclosable (and therefore 1PI) graphs
\begin{equation}\label{eq_sigmae}
\Sigma^{[U]}_{r} (\lambda) := \frac{1}{r!} \left( \sum_{G \in GFU(r,1,2)} FCT(G,\lambda) \right) = \lambda_1^2 \cdot G^{[U],pt,par}_r(\lambda)
\end{equation}
and of the closable 1PI graphs
\begin{equation}\label{eq_sigmas}
\Sigma^{[C]}_{r} (\lambda) := \frac{1}{r!} \left(\sum_{G \in GFCP(r,1,2)} FCT(G,\lambda) \right)
\end{equation}
\end{definition}
We will now derive the main relationship between $G^{[2],pt,par}_{r,c}$ and $\Gamma^{[2],pt}_r$. We define the formal series
\begin{equation}
\Gamma^{[2],pt} (\lambda) := \sum_{r \geq 0} \Gamma^{[2],pt}_r (\lambda)
\end{equation}
We now start with
\begin{lemma}\label{le_decopi}
Let $G \in GF(r,1,2)$. 
Then the graph $Pr(G)$ has a uniquely defined decomposition into bridges (isthmusses) $b_0,\ldots,b_{M}$  and subgraphs $G_1,\ldots,G_M$ with $EC(G_i) \geq 2$. The sequence of the $G_i$ and $b_i$ can be chosen such that any Eulerian trail on $Pr(G)$ enters the subgraphs $G_i$ in the order of their indices $i$ going into $G_i$ over the isthmus $b_{i-1}$ and out of $G_i$ over the isthmus $b_i$ for $i=1,\ldots,M$.  In addition let $H_i$ be the subgraph of $Pr(G)$ consisting of $G_i$ and its ingoing edge $b_{i-1}$ and its outgoing edge $b_i$ and the vertex sets $V_{o,1}:= \{st(b_{i-1})\}$ and $V_{o,2} := \{end(b_i)\}$.  Then $Ret(H_i) \in GFU(k_i,1,2)$ or $Ret(H_i) \in GFCP(k_i,1,2)$ for some $k_i$. Additionally
\begin{equation}
r = \sum_{i = 1}^M k_i
\end{equation}
On the other hand: let there be any sequence $(K_1,\ldots,K_M)$ of graphs $K_i \in GFU(k_i,1,2)$ or $K_i \in GFCP(k_i,1,2)$ and $G_i := Amp(K_i)$. Concatenating the $G_i$ over bridges $b_{i-1},b_i$ compatible with the entry/exit structure of $Pr(K_i)$ uniquely results in a graph $H \in PGF(r,1,2)$ up to isomorphism. $H$ additionally has the feature $H \in PGFC(r,1,2)$ unless $M=1$ and $K_1 \in GFU(k_1,1,2)$.\\ 
Let $G = Ret(H)$. Then  
\begin{equation}\label{eq_grpr}
gr(G) = \frac{r!}{k_1!\cdot \ldots \cdot k_M!} \cdot \frac{1}{symg(K_1,\ldots,K_M)}\prod_{i=1}^M gr\left(K_i\right)
\end{equation}
where $symg(K_1,\ldots,K_M)$ is the product of the factorials of the number of isomorphic copies in each group of isomorphic graphs in $(K_1,\ldots,K_M)$.  
\end{lemma}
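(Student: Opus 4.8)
The plan is to derive Lemma~\ref{le_decopi} from the canonical bridge / $2$-edge-connected block decomposition of a connected multigraph, specialised to the flow structure of $Pr(G)$. For $G\in GF(r,1,2)$ the graph $Pr(G)\in PGF(r,1,2)$ has a single source $st(G)$ with $outd=1,ind=0$, a single sink $end(G)$ with $ind=1,outd=0$, and all other vertices balanced with $ind\geq 2$. The set of bridges of $Pr(G)$ and the partition of the remaining edges into maximal blocks with $EC\geq 2$ are uniquely determined by the graph, so existence and uniqueness of \emph{some} decomposition is automatic; the actual content is that these blocks and bridges form a single chain, and that each piece $Ret(H_i)$ lands in $GFU$ or $GFCP$.

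First I would prove the chain structure by a flow argument on the bridge tree $T$ whose nodes are the blocks and whose edges are the bridges. For any vertex set $S$ the signed edge boundary of $S$ (edges leaving minus edges entering) equals the net imbalance of $S$, which by balancedness of the inner vertices is $+1$ when $S$ contains the source but not the sink, $-1$ in the reverse case, and $0$ otherwise. Evaluating this on the two sides of a single bridge shows every bridge points from its source side to its sink side. Hence a block containing neither source nor sink has as many incoming as outgoing incident bridges, so it touches at least two bridges and cannot be a leaf of $T$; the only leaves are the trivial blocks $\{st(G)\}$ and $\{end(G)\}$. A tree with at most two leaves is a path, so the pieces line up as $st(G),b_0,G_1,b_1,\ldots,G_M,b_M,end(G)$, which pins the indexing. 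The Eulerian-trail ordering then follows from the standard fact that a trail cannot cross a bridge and come back: between traversing $b_{i-1}$ and $b_i$ it must exhaust all of $G_i$.

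Next I would read off the remaining first-half assertions. Since the inner vertices of $G$ are partitioned among the blocks, $k_i:=\#V(G_i)$ gives $r=\sum_i k_i$. Each $H_i$ has exactly the two degree-one outer vertices $st(b_{i-1})$ and $end(b_i)$, so $H_i\in PGF(k_i,1,2)$ and $Ret(H_i)\in GF(k_i,1,2)$ with $Amp(Ret(H_i))=G_i$; as $EC(G_i)\geq 2$, the closable case is automatically $1$PI, giving $Ret(H_i)\in GFCP(k_i,1,2)$, while the unclosable case gives $Ret(H_i)\in GFU(k_i,1,2)$, as claimed. For the converse, gluing a sequence of blocks in series along bridges is visibly inverse to this decomposition and yields a well-defined $H\in PGF(r,1,2)$ up to isomorphism. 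Its closability is decided by whether $head(ank)$ and $tail(bnk)$ coincide: for $M\geq 2$ these lie in different blocks and so differ, whence $H\in PGFC$, whereas for $M=1$ they agree exactly when the single block is unclosable, which is the stated exceptional case via the dichotomy \eqref{eq_sfe}.

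The main obstacle is the symmetry-factor identity \eqref{eq_grpr}. Using $gr(X)=(\#\text{inner})!/Syf(X)$ (here $p=1$), I would reduce \eqref{eq_grpr} to the automorphism identity $Syf(G)=symg(K_1,\ldots,K_M)\cdot\prod_i Syf(K_i)$ and prove it by analysing how a vertex automorphism of $G$ restricts to the canonical block chain. The delicate point is bookkeeping the automorphisms that permute isomorphic blocks: I must determine exactly which block permutations are realised by automorphisms of $G$, given that the series arrangement and the distinguished outer vertex constrain them, and show that repeated isomorphism types contribute precisely the factor $symg$ while the intrinsic automorphisms of the individual pieces contribute $\prod_i Syf(K_i)$. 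Equivalently, one sets up the map sending a labelling of $G$ to the induced ordered distribution of labels among the blocks together with the internal block labellings, and checks that the multinomial count $r!/(k_1!\cdots k_M!)$, the block counts $gr(K_i)$, and the correction $1/symg$ account for every fibre exactly once. This careful count is where all the combinatorial care is needed; the rest is structural.
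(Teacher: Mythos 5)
Your structural half is sound and runs essentially parallel to the paper's own argument: where the paper observes directly that an Eulerian trail on $Pr(G)$ cannot recross a bridge and must therefore exhaust each component of $Pr(G)\setminus Br(G)$ between consecutive bridges, you first establish that the bridge tree is a path via the imbalance/boundary count and then invoke the same no-recrossing fact; the two routes are interchangeable, and your identification of $Ret(H_i)$ with elements of $GFU(k_i,1,2)$ or $GFCP(k_i,1,2)$, the count $r=\sum_i k_i$, the converse gluing, and the closability dichotomy (closable for $M\geq 2$, exceptional case $M=1$ with $K_1$ unclosable) all agree with the paper. One small point you leave implicit: an intermediate single-vertex block (a vertex all of whose incident edges are bridges) must be excluded, since by the path structure it would have exactly one in-bridge and one out-bridge and hence $ind=1<2$; the paper makes this explicit.

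The genuine gap is the symmetry-factor identity \eqref{eq_grpr}, which you correctly isolate as the only hard part but do not carry out, and the identity you plan to verify, $Syf(G)=symg(K_1,\ldots,K_M)\cdot\prod_i Syf(K_i)$, is not what the automorphism analysis delivers. Any automorphism counted by $Syf(G)$ fixes the unique outer vertex, hence fixes $b_0=ank(G)$ (the only edge leaving it), hence maps the block $G_1$ containing $end(b_0)$ to itself, and inductively fixes every $b_i$ and every $G_i$; since edge directions forbid reversing the chain, \emph{no} permutation of isomorphic blocks is ever realised. The fibre count you describe therefore yields $Syf(G)=\prod_i Syf(K_i)$ exactly, i.e. $gr(G)=\frac{r!}{k_1!\cdots k_M!}\prod_i gr(K_i)$ with no factor $1/symg$. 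A concrete check: let $K\in GFCP(2,1,2)$ have inner vertices $u,v$, outer vertex $o$, and edges $o\to u$, $u\to v$ (doubled), $v\to u$, $v\to o$; then $Syf(K)=1$, $gr(K)=2$, and the chain $G$ of two copies of $K$ has $Syf(G)=1$ and $gr(G)=4!=24$, whereas \eqref{eq_grpr} with $symg=2!$ gives $12$. Note that the symg-free version is also the one the paper actually needs downstream: the derivation of \eqref{eq_gr2ga} matches each isomorphism class of chains to exactly one \emph{ordered} sequence of classes $(K_1,\ldots,K_M)$ arising from the geometric expansion, which requires $gr(G)/r!=\prod_i gr(K_i)/k_i!$. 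So your proposed ``delicate bookkeeping'' cannot be completed as envisaged — executed honestly it refutes the $symg$ factor rather than producing it (the paper's own one-line justification, ``because of the free choice of the vertices,'' does not settle this either). The repair is to prove the rigidity statement above, obtaining \eqref{eq_grpr} with $symg$ replaced by $1$; the factor $symg$ would enter only if one matched $G$ to unordered multisets of blocks, which is not how the canonical, order-anchored decomposition (or its later application) works.
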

\begin{proof}
Let $Br(G) \subset E(Pr(G))$ be the set of bridges of $Pr(G)$ including $ank(G)$ and $bnk(G)$. Let $\left(G_i \right)_{i = 1,\ldots,M}$ be the connectivity components of $Pr(G) \setminus Br(G)$ other than the isolated vertices $st(G)$ and $end(G)$. Let $Ec$ be a Eulerian circuit on $G$ starting in $st(G)$. After entering any $G_i$ the circuit $Ec$ must first go through all edges of $G_i$ before entering another bridge, because it cannot return to $G_i$ after crossing a bridge and so the $G_i$ are Eulerian or semi Eulerian and have edge connectivity $\geq 2$. As all inner vertices of $Pr(G)$ have an indegree (and therefore outdegree) greater than one the $G_i$ cannot consist of a single vertex. So we can order the connectivity components and the bridges $b_i \in Br(G)$ such that the circuit $Ec$ after starting in $b_0:=ank(G)$ consecutivly enters $G_i$ and then $b_i$ in the order of the index $i = 1,\ldots, M$ where $b_M:=bnk(G)$. This order of the connectivity components and bridges does not depend on the specific Eulerian circuit $Ec$, as there is only the bridge $b_i$ between $G_i$ and $G_{i+1}$ for $i=1,\ldots,M-1$ and any Euler trail must start in $b_0$ and end in $b_M$. \\
On the other hand: \\
Any concatenation of bridges $(b_0,\ldots,b_{M})$ and graphs $(G_1,\ldots,G_M)$ in the form of the above lemma obviously form a graph $H$ which is Eulerian. $H$ has $r = \sum_{i=1}^M k_i$ inner vertices which all by construction have an indegree greater than one and so $Ret(H) \in GF(r,0,2)$. By construction it is clear that $H \in PGFC(r,1,2)$ unless $M=1$ and $K_1 \in GFU(k_1,1,2)$.\\  
The graph sequence $(G_1,\ldots,G_M)$ can be uniquely extracted from $Pr(G)$ by the construction in the first part of the proof. So they uniquely determine $Pr(G)$ up to graph isomorphism and to exchanging isomorphic copies of the $K_i$.  Because of the free choice of the vertices formula \eqref{eq_grpr} is true.
\end{proof}
\begin{lemma} As is well known in quantum field theory (e.g. \citep[eq. 6.78]{itzykson} or \citep[eq. 5.75]{phi4} with slightly different definitions/notations) for $r \geq 0$
\begin{equation}\label{eq_gr2ga}
G_{r,c}^{[2],pt,par}(\lambda) = \left[\frac{1}{\Gamma^{[2],pt}(\lambda)}\right]_r
\end{equation}
where $[\ldots]_r$ for $r \geq 0$ denotes the sum of the terms with a product of $r$ variables $\lambda_w$ with $w > 1$ in the formal geometric expansion of $1/\Gamma^{[2],pt}(\lambda)$.
\end{lemma}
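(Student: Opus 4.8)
The plan is to establish the formal identity $G^{[2],pt,par}_c(\lambda)\cdot\Gamma^{[2],pt}(\lambda)=1$ as formal power series in the couplings $\lambda_w$ ($w>1$) and then to read off its $r$th coefficient. Writing $\Gamma^{[2],pt}(\lambda)=\sum_{r\geq0}\Gamma^{[2],pt}_r(\lambda)=\lambda_1-\Sigma(\lambda)$ with the full self-energy $\Sigma=\sum_{r\geq1}\Sigma_r$ (by \eqref{eq_gamma2d} and $\Sigma_0=0$), this identity is equivalent to the statement that $G^{[2],pt,par}_c$ is the geometric resummation $\frac{1}{\lambda_1}\sum_{M\geq0}(\Sigma/\lambda_1)^M$ of one-particle-irreducible self-energy blocks strung together by free zero-momentum propagators $1/\lambda_1$. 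Extracting the terms with exactly $r$ internal vertices then yields \eqref{eq_gr2ga}; the base case $r=0$ is the separately recorded value $G^{[2],pt,par}_{0,c}=1/\lambda_1$, which matches the order-zero term of $1/\Gamma^{[2],pt}$.

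First I would start from the graph expansion $G^{[2],pt,par}_{r,c}(\lambda)=\frac{1}{r!}\sum_{G\in GF(r,1,2)}FC(G,\lambda)$ (corollary \eqref{eq_g2d} with $p=1$), using the disjoint splitting $GF(r,1,2)=GFC(r,1,2)\,\dot{\cup}\,GFU(r,1,2)$. The structural heart of the argument is Lemma \ref{le_decopi}: every $G\in GF(r,1,2)$ determines, and is determined by (up to isomorphism and reordering of isomorphic pieces), a chain of irreducible blocks $K_1,\ldots,K_M$ with $K_i\in GFU(k_i,1,2)\cup GFCP(k_i,1,2)$ and $\sum_i k_i=r$, concatenated along bridges $b_0,\ldots,b_M$. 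This reduces the sum over all two-point graphs to a sum over ordered chains of self-energy building blocks.

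Next I would factorize $FC(G,\lambda)$ over this chain. The coupling factor $\Pi_G$ is a product over inner vertices and so splits blockwise at once; the Feynman integral $I_{Pr(G)}$ factorizes at each bridge because a bridge lies in every spanning tree, so the Kirchhoff--Symanzik polynomial \eqref{eq_poly} of $Pr(G)$ is the product of those of the blocks, and at vanishing external momentum the through-line carries zero momentum, so each of the $M+1$ bridges contributes exactly one propagator $1/\lambda_1$. The $m$-weight $wei_G(m,1)=m^{cycn(\sigma)-1}$ factorizes as $\prod_i wei_{K_i}(m,1)$ because the unique through-trail threads the blocks in sequence while every other cycle of $\sigma$ is confined to a single block (it cannot cross a bridge, which the through-trail already consumes), so $cycn(\sigma)-1=\sum_i(cycn(\sigma_i)-1)$. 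Feeding in the multiplicity \eqref{eq_grpr},
\begin{equation*}
gr(G)=\frac{r!}{k_1!\cdots k_M!}\cdot\frac{1}{symg(K_1,\ldots,K_M)}\prod_{i=1}^M gr(K_i),
\end{equation*}
the prefactor $1/r!$ cancels $r!/(k_1!\cdots k_M!)$, while $symg$ exactly removes the over-counting of chains built from isomorphic blocks. Since $FCT=\lambda_1^2\,FC$ amputates a block's two external propagators, each amputated block collapses to $\Sigma_{k_i}=\frac{1}{k_i!}\sum_{K_i}FCT(K_i,\lambda)$, and one is left with
\begin{equation*}
G^{[2],pt,par}_{r,c}(\lambda)=\frac{1}{\lambda_1}\sum_{M\geq0}\ \sum_{\substack{k_1+\cdots+k_M=r\\ k_i\geq1}}\ \prod_{i=1}^M\frac{\Sigma_{k_i}(\lambda)}{\lambda_1},
\end{equation*}
which is precisely the $r$th term of $\frac{1}{\lambda_1-\Sigma}=1/\Gamma^{[2],pt}$, i.e.\ \eqref{eq_gr2ga}.

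The hard part will be the bookkeeping of multiplicities and propagator powers rather than any single estimate: I must check that the decomposition produces exactly $M+1$ bridges (the two external legs $ank,bnk$ plus $M-1$ internal ones), that each yields a clean factor $1/\lambda_1$ at zero momentum once the mass factors $FM$ are tracked through $I_{Pr(G)}=\prod_i I_{Pr(K_i)}$ with $L(Pr(G))=\sum_i L(Pr(K_i))$, and above all that $symg$ together with the $1/k_i!$ converts the constrained sum over distinct graphs into the unrestricted product $\prod_i(\Sigma/\lambda_1)$ with no leftover factors. The weight factorization also needs care, since it rests on the block confinement of the cycles of $\sigma$, which must be argued directly from the bridge structure of $Pr(G)$.
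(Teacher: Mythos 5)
Your proposal follows essentially the same route as the paper's proof: you invoke the bridge decomposition of Lemma \ref{le_decopi} together with its multiplicity formula \eqref{eq_grpr}, factorize the Feynman integral, the mass factor (yielding the $(1/\lambda_1)^{M+1}$ from the $M+1$ bridges, exactly the paper's $FM_{Pr(G)} = (1/\lambda_1)^{M+1}\prod_i (FM_{Pr(H_i)}\cdot\lambda_1^2)$), the coupling factor and the weight $wei_G(m,1)=\prod_i wei_{H_i}(m,1)$ blockwise, and then resum the resulting chains of self-energy blocks $\Sigma_{k_i}$ into the geometric series for $1/(\lambda_1-\Sigma)$. The only difference is presentational — you make explicit the spanning-tree/Kirchhoff--Symanzik factorization, the loop-number bookkeeping, and the cycle-confinement argument for the weights (which the paper phrases via partitions, selections and Euler trails on $Ret(H_i)$) — so this is the paper's argument carried out in more detail.
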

\begin{proof}
The case $r=0$ is trivial. For $r>0$: If $G \in GF(r,1,2)$ then $Pr(G)$ has a uniquely defined decomposition as shown in lemma \ref{le_decopi} with parts $H_i \in GF(k_i,1,2)$ and $G_i = Amp(H_i)$. 
Now 
\begin{equation}
I_{Pr(G)}(d) = \prod_{i=1}^M I_{Pr(H_i)}(d)
\end{equation}
and 
\begin{equation}
FM_{Pr(G)}(\lambda_1) = \left(\frac{1}{\lambda_1} \right)^{M+1} \prod_{i=1}^M \left( FM_{Pr(H_i)}(\lambda_1) \cdot \lambda_1^2 \right)
\end{equation}
and 
\begin{equation}
wei_G(m,1) = \prod_{i=1}^M wei_{H_i}(m,1)
\end{equation}
because partitions, selections and Euler trails on $G$ are uniquely related to those on $Ret(H_i)$ and vice versa. Using lemma \ref{le_decopi} and especially equation \eqref{eq_grpr} we therefore reach equation \eqref{eq_gr2ga}.
\end{proof}

\subsection{Connection for $\Gamma^{[0]}$}
To connect $G^{[0],pt,par}_{r,c}$ with the results on closed random walks we prove the following
\begin{lemma}\label{le_balphr}
Let $\lambda_k = 0 \Leftarrow k > 2$ and 
let $F \in MF(r,0,2)$. Then either $F \in \tilde{H}_r(2,\ldots,2)$ (defined in \citep[Theorem 1.1]{dhoefc}) or $\Pi_F(\lambda) = 0$. 
\end{lemma}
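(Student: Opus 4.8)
The plan is to argue by contraposition: I assume $\Pi_F(\lambda)\neq 0$ and force the degree sequence of $F$ to be exactly $(2,\ldots,2)$, after which membership in $\tilde H_r(2,\ldots,2)$ follows by matching definitions.

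First I would unwind the weight. By definition $\Pi_F(\lambda)=\prod_{k=1}^{r}(-\lambda_{dias(F)_k})$, so $\Pi_F(\lambda)\neq 0$ holds if and only if $\lambda_{dias(F)_k}\neq 0$ for every $k=1,\ldots,r$. Under the hypothesis $\lambda_k=0$ for $k>2$ the support of $\lambda$ is contained in $\{1,2\}$, hence $\Pi_F(\lambda)\neq 0$ forces $dias(F)_k\in\{1,2\}$ for all $k$. On the other hand, because $F\in MF(r,0,2)$, Definition \ref{de_balpu} (with $w=2$, $p=0$) gives $dias(F)_k\geq 2$ for all $k$. Combining the two constraints yields $dias(F)_k=2$ for every $k=1,\ldots,r$, i.e.\ $dia(F)=(2,\ldots,2)$.

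So every $F$ with nonvanishing weight is a nonzero balanced matrix in $MBaln(r)$ with zero diagonal (inherited from $Ma(r)$), all row and column sums equal to $2$, and satisfying the cofactor condition $cof(Diag(F)-F)\neq 0$; by Lemma \ref{le_coneul} the latter is equivalent to connectedness of $G(F)$, so $G(F)$ is a connected balanced (hence Eulerian) graph in which every vertex has in- and out-degree $2$. The remaining and main step is to identify this intrinsic class with the external set $\tilde H_r(2,\ldots,2)$ from \citep[Theorem 1.1]{dhoefc}. I expect the only real work to be bookkeeping: one has to unpack the definition of $\tilde H_r(\cdot)$ in the companion paper and check that its conventions --- zero diagonal, loopfreeness, the meaning of the tilde, and the prescribed degree values encoded by the argument $(2,\ldots,2)$ --- coincide exactly with the four conditions just derived. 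Once the definitions are reconciled, the inclusion $F\in\tilde H_r(2,\ldots,2)$ is immediate and the dichotomy is established; I do not anticipate any genuine combinatorial obstacle beyond this cross-referencing.
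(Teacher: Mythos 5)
Your proposal is correct and follows essentially the same route as the paper, whose proof is the one-line contrapositive of yours: if $F \in MF(r,0,2)$ but $F \notin \tilde{H}_r(2,\ldots,2)$, then some $dias(F)_k > 2$, so $\lambda_{dias(F)_k} = 0$ and $\Pi_F(\lambda) = 0$. The definitional cross-referencing you defer (that $\tilde{H}_r(2,\ldots,2)$ is precisely the subset of $MF(r,0,2)$ with $dia(F) = (2,\ldots,2)$) is likewise taken as immediate in the paper's proof.
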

\begin{proof}
From the definition it is clear that $ \tilde{H}_r(2,\ldots,2) \subset MF(r,0,2)$. So let $F \in MF(r,0,2)$ but $F \notin \tilde{H}_r(2,\ldots,2)$. Then there is a $k$ such that $dias_k(F) > 2$ and therefore $\Pi_F(\lambda) = 0$.
\end{proof}
From field theory we know 
\begin{equation}
\Gamma^{[0],pt}_r (\lambda) = G^{[0],pt,par}_{r,c} (\lambda) = \frac{1}{r!} \left( \sum_{F \in MF(r,0,2)} FC(F,\lambda) \right)
\end{equation}
Now according to lemma \ref{le_balphr} for $\Phi^4$ theory this means 
\begin{equation}\label{eq_gamphi}
\Gamma^{[0],pt}_{\Phi^4,r}(\lambda) = \frac{1}{r!} \left( \sum_{F \in \tilde{H}_r(2,\ldots,2)} FC(F,\lambda)  \right)
\end{equation}
So writing in the terms of \cite{dhoefc} we find
\begin{equation}
\Gamma^{[0],pt}(\lambda) = \lambda_1^{d/2}  \sum_{r=2}^{\infty}   \left(-\lambda_1^{d/2 - 2}\cdot \lambda_2 \right)^r \cdot \Gamma^{[0],pt}_{\phi_4,r}(m)
\end{equation}
with 
\begin{equation}\label{eq_gam0phi4}
\Gamma^{[0],pt}_{\phi_4,r}(m):=\frac{1}{r!}\left( \sum_{F \in H_r(2,\ldots,2)} wei_F(m,0) \cdot \frac{\Gamma_{G(F)}\left(1 - \frac{d}{2}\right)}{(4\pi)^{(r+1)\cdot d/2}} \right) 
\end{equation}
and
\begin{equation}\label{eq_phi4wei}
wei_F(m,0) := \sum_{j \geq 1} a_{F,0,j}\cdot m^j
\end{equation}
and with theorem \ref{th_weifp}
\begin{equation}\label{eq_af10}
a_{F,0,1} = 2^r \cdot cof(A - F) \cdot \frac{1}{\prod_{i,j} F_{i,j}!}
\end{equation}
and therefore $\Gamma_{r,0}^{[0],pt} = 0$ (where we now have dropped the subscript $\phi^4$) and with the notation from \cite[eq. 3.10]{dhoefm} we get for $d = 2$ 
\begin{equation}\label{eq_tgamma0}
\Gamma_{r,1}^{[0],pt} =  \frac{1}{r! \cdot 8^r}  \cdot \left( \sum_{F \in H_r(2,\ldots,2)}  cof(A - F) \cdot \frac{1}{\prod_{i,j} F_{i,j}!} \cdot I(F) \right) 
\end{equation} 

Equation \eqref{eq_tgamma0} is therefore a suitable equation for the point of contact to \cite[eq. 8.57]{dhoefc} for the closed random walk. \\

\subsection{Connection for $G^{[C]}$}
We have already seen that the $Div$ operation leads to a  relationship between $GF(r,0,2)$ and $GFC(r,1,2)$. So we start with a deeper study of the $Div$ operation given in definition \ref{de_divcl} on the level of permutations. 
\begin{definition}
 Let $\sigma \in SF(tu,r,1)$ with $R = Sum(tu)$. Then we define $bnk(\sigma) : = \sigma^{-1}(R + 1)$ and $ank(\sigma) := \sigma(R + 1)$. We say that $\sigma$ is closeable iff $qa_{tu}(bnk(\sigma)) \neq qa_{tu}(ank(\sigma))$ and unclosable otherwise. We denote with $SFC(tu,r,1)$ and with $SFU(tu,r,1)$  the set of closeable and uncloseable permutations in $SF(tu,r,1)$ respectively.
\end{definition}  
\begin{definition}
For $\sigma \in SFC(tu,r,1)$ we can then define 
 \begin{equation}
  Cl(\sigma) = \bar{\sigma} \in S_R
 \end{equation}  
 by 
 \begin{equation}
 \bar{\sigma}(bnk(\sigma)) = ank(\sigma)
 \end{equation}
 and 
 \begin{equation}
 \bar{\sigma}(a) = \sigma(a) \Leftarrow a \neq bnk(\sigma)
 \end{equation}
 Then obviously $\bar{\sigma} \in SF(tu,r,0)$ and $Adj(\bar{\sigma},tu,0) = Cl(Adj(\sigma,tu,1))$. 
\end{definition}
\begin{definition}
On the other hand if we have $\pi \in SF(tu,r,0)$ and $1 \leq a \leq R$ with $R = Sum(tu)$ we can define:
\begin{equation}
Div(a,\pi) = \pi_a \in S_{R + 1}
\end{equation}
by 
\begin{equation}
 \pi_a(a) := R + 1
 \end{equation}
 and 
 \begin{equation}
 \pi_a(R + 1) := \pi(a)
 \end{equation}
 and 
 \begin{equation}
 \pi_a(b) := \pi(b) \Leftarrow b \neq a
 \end{equation}
 Then obviously $Div(a,\pi) \in SF(tu,r,1)$ and there is an $e \in E(G(Adj(\pi,tu,0)))$ with $st(e) = qa_{tu}(a)$ and $end(e) = qa_{tu}(\pi(a))$ such that 
 \begin{equation}\label{eq_divea}
 G(Adj(Div(a,\pi),tu,1)) = Div(e,G(Adj(\pi,tu,0)))
 \end{equation}
 and 
  \begin{equation}\label{eq_diveam}
 Adj(Div(a,\pi),tu,1) = Div((qa_{tu}(a),qa_{tu}(\pi(a))),Adj(\pi,tu,0))
 \end{equation}
 For any $\pi \in SF(tu,r,0)$ and any $1 \leq a \leq Sum(tu)$ we have 
 \begin{equation}
 Cl(Div(a,\pi)) = \pi
 \end{equation}
 independent of $a$ and for any $\sigma \in SFC(tu,r,1)$ we have
 \begin{equation}
 Div(bnk(\sigma),Cl(\sigma)) = \sigma
 \end{equation} 
\end{definition}
\begin{lemma}\label{le_pidiv}
Let $F \in MF(r,0,w)$. 
There is a one to one relationship $Div$ between pairs $(a,\pi)$ with $\sigma \in WS(F)$ and $1 \leq a \leq Sum(F)$ and $\rho \in WS(H)$ with $H \in DM(F)$. 
\end{lemma}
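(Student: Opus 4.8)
The plan is to show that the assignment $(a,\pi) \mapsto Div(a,\pi)$ furnishes the claimed bijection, essentially by assembling the pointwise identities already established for the $Div$ and $Cl$ operations on permutations. Throughout I write $tu := dia(F)$ and $R := Sum(F) = Sum(tu)$, and I record that $WS(F)$ consists of the permutations $\pi \in SF(tu,r,0)$ with $Adj(\pi,tu,0) = F$, while for $H \in DM(F)$ the set $WS(H)$ consists of $\rho \in SF(tu,r,1)$ with $Adj(\rho,tu,1) = H$. Here I use that passing from $F$ to $H = Div((i,j),F)$ leaves the inner-vertex degrees untouched, so that $dia(H) = dia(F) = tu$ and both weight sets are built over the same tuple $tu$, with $Div$ extending $\pi \in S_R$ to $Div(a,\pi) \in S_{R+1}$.

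First I would verify that the forward map is well defined. For $\pi \in WS(F)$ and $1 \leq a \leq R$ we already know $Div(a,\pi) \in SF(tu,r,1)$, and by \eqref{eq_diveam} its adjacency matrix is $Adj(Div(a,\pi),tu,1) = Div((qa_{tu}(a),qa_{tu}(\pi(a))),F)$, which lies in $MFC(r,1,w)$ and closes back to $F$. Hence $H := Adj(Div(a,\pi),tu,1) \in DM(F)$ and $Div(a,\pi) \in WS(H)$, so the image is contained in $\dot{\bigcup}_{H \in DM(F)} WS(H)$.

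For surjectivity and injectivity I would exhibit the inverse explicitly. Any $H \in DM(F)$ is closeable, since $Cl(H) = F$ already presupposes $H \in MFC(r,1,w)$, and a short degree count on the single outer edge shows that every $\rho \in WS(H)$ is then itself closeable, i.e.\ $\rho \in SFC(tu,r,1)$, so $Cl(\rho)$ is defined. Given such a $\rho$ I set $\pi := Cl(\rho)$ and $a := bnk(\rho) = \rho^{-1}(R+1)$. Then $Adj(\pi,tu,0) = Cl(Adj(\rho,tu,1)) = Cl(H) = F$ gives $\pi \in WS(F)$, while loopfreeness forces $\rho(R+1) \neq R+1$ and hence $a \leq R$. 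The previously recorded identities $Div(bnk(\rho),Cl(\rho)) = \rho$ and $Cl(Div(a,\pi)) = \pi$, together with $Div(a,\pi)^{-1}(R+1) = a$, then show that the two assignments are mutually inverse, which is exactly the asserted one-to-one relationship.

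The main obstacle I anticipate is bookkeeping rather than conceptual: confirming that the diagonal tuple $tu = dia(F)$ is genuinely inherited by every $H \in DM(F)$, so that $WS(F)$ and $WS(H)$ are built over the same $R$ and the same function $qa_{tu}$ and the extension $S_R \hookrightarrow S_{R+1}$ effected by $Div$ is the correct one, and verifying the closeability dictionary $bnke(H) = qa_{tu}(bnk(\rho))$ and $anke(H) = qa_{tu}(ank(\rho))$ that matches the matrix-level notion of closeable with the permutation-level one. Once these identifications are in place, the lemma follows formally from the transport identities already established for $Div$ and for the pair $(Cle,Cl)$.
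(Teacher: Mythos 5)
Your proof is correct and follows essentially the same route as the paper: the forward map is $(a,\pi)\mapsto Div(a,\pi)$, whose adjacency matrix lands in $DM(F)$ by \eqref{eq_diveam}, and the inverse is $\rho \mapsto (bnk(\rho),Cl(\rho))$, with bijectivity secured by the previously recorded identities $Cl(Div(a,\pi)) = \pi$ and $Div(bnk(\rho),Cl(\rho)) = \rho$. If anything, you are more careful than the paper's terse proof, since you explicitly verify the bookkeeping it leaves implicit --- that $dia(H) = dia(F)$ for $H \in DM(F)$, and that the matrix-level closeability of $H$ transfers to the permutation level via $bnke(H) = qa_{tu}(bnk(\rho))$ and $anke(H) = qa_{tu}(ank(\rho))$, so that $Cl(\rho)$ is actually defined.
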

\begin{proof}
For the pair $(a,\pi)$ with $\pi \in WS(F)$ and $1 \leq a \leq Sum(F)$  we have already defined $Div(a,\pi)$, which fulfils $Div(a,\pi) \in WS(H)$ with $H \in DM(F)$. Let on the other hand a $\rho \in WS(H)$ with $H \in DM(F)$ be given. Then the pair $WP(\rho) :=(bnk(\rho),Cl(\rho))$ fulfils $Cl(\rho) \in WS(F)$ and $1 \leq bnk(\rho)\leq Sum(Cl(H))$ and we have $Div(WP(\rho)) = \rho$ and $WP(Div(a,\pi)) = (a,\pi)$.
\end{proof}
\begin{definition}\label{de_deltag}
Let $G \in GF(r,p,w)$. We define 
\begin{equation}
WS(G) := \{\rho \in SF(dia(F),r,p): F = Adj(G) \}
\end{equation}
where $Adj(G)$ can be taken with any mapping $map_G \in BVA(G)$.
\end{definition}
\begin{lemma}\label{le_psfer1}
Let $G \in GF(r,0,w)$. There is a one to one relationship $Div$ between pairs $(a,\pi)$ with $\pi \in WS(G)$ and $1 \leq a \leq Sum(G)$ and permutations $\rho \in WS(H)$ with $H \in DG(G)$. \\
Moreover
\begin{equation}
GFC(r,1,w) = \dot{\cup}_{G \in GF(r,0,w)} DG(G)
\end{equation} 
\end{lemma}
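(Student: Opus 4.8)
The second assertion is nothing new: the disjoint decomposition of $GFC(r,1,w)$ indexed by $G \in GF(r,0,w)$ is exactly equation \eqref{eq_bigcupw}, already established in Definition \ref{de_divcl} from the fact that $Cl$ sends each $H \in GFC(r,1,w)$ to a unique $G = Cl(H) \in GF(r,0,w)$. So the work lies entirely in the first assertion, and the plan is to lift the matrix-level bijection of Lemma \ref{le_pidiv} to graphs by passing through a single fixed vertex numbering.

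First I would fix an admissible mapping $map_G \in BVA(G)$; by the lemma relating $GF(r,p,w)$ and $MF(r,p,w)$ this produces a matrix $F := Adj(G) \in MF(r,0,w)$ with $tu := dia(F)$ and $R := Sum(F) = Sum(G)$, and by Definition \ref{de_deltag} it identifies $WS(G)$ with $WS(F)$ as one and the same subset of $SF(tu,r,0)$. Thus the left-hand sides already match: the pairs $(a,\pi)$ with $\pi \in WS(G)$ and $1 \le a \le Sum(G)$ are literally the pairs appearing in Lemma \ref{le_pidiv}. Next I would match the right-hand sides. Extending $map_G$ to any $H \in DG(G)$ by sending the single outer vertex to $r+1$ is admissible, since $Cl$ leaves the inner vertices (and hence $map_G$) untouched, and yields $Adj(H) =: J$ with $Cl(J) = Adj(Cl(H)) = Adj(G) = F$, so $J \in DM(F)$. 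Conversely every $J \in DM(F) \subseteq MFC(r,1,w)$ is realised as $J = Adj(G(J))$ with $G(J) \in GFC(r,1,w)$ and $Cl(G(J)) = G(Cl(J)) = G(F) \cong G$, whence $G(J) \in DG(G)$; in both directions $WS(H) = WS(J)$ by Definition \ref{de_deltag}.

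Consequently $\bigcup_{H \in DG(G)} WS(H) = \bigcup_{J \in DM(F)} WS(J)$, so the right-hand side of the claim coincides with the right-hand side of Lemma \ref{le_pidiv}. Applying that lemma to $F$ then gives precisely the asserted one-to-one relationship, and that the graph operation $Div$ of Definition \ref{de_divcl} is the operation transported through this dictionary is exactly the content of \eqref{eq_divea} and \eqref{eq_diveam}, which equate the permutation-level $Div(a,\pi)$ with the graph-level and matrix-level $Div$ under $G(\cdot)$ (Definition \ref{de_gsigma}).

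The one point that needs care — and the only real obstacle — is that the graph operation $Div(e,G)$ is not literally invertible edge by edge, since $Cle(Div(e,G))$ may return a parallel copy of $e$ rather than $e$ itself (Definition \ref{de_divcl}). This ambiguity is precisely what the index $1 \le a \le Sum(G)$ absorbs: working inside $WS$ means working with edges labelled by the integers $1,\ldots,R$ rather than with edge-equivalence classes, so the pair $(a,\pi)$ records which labelled edge was split and the inverse $\sigma \mapsto (bnk(\sigma),Cl(\sigma))$ recovers it unambiguously. I would therefore stress that the counting is carried out at the permutation level, where $Div$ and $(Cle,Cl)$ are exact mutual inverses, and only afterwards sorted by the underlying graph through the identifications above; this is what makes the lift from Lemma \ref{le_pidiv} go through without loss.
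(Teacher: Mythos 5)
Your proposal is correct and follows essentially the same route as the paper: the paper's own proof also works through the permutation-level mutual inverses $Div(a,\pi)$ and $WP(\rho)=(bnk(\rho),Cl(\rho))$ together with a fixed vertex numbering (via $G(\pi)$ and $Adj$), and disposes of the decomposition by noting that $Cl$ is well defined, exactly as in \eqref{eq_bigcupw}. The only difference is organizational --- you cite Lemma \ref{le_pidiv} as a black box and transport it through an explicit $DG(G)\leftrightarrow DM(F)$ dictionary, whereas the paper re-runs the same argument directly on graphs --- and your closing remark on parallel edges correctly identifies why the labelled index $a$ makes the correspondence exact.
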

\begin{proof}
Let $G \in GF(r,0,w)$ and $\pi \in WS(G)$ and $1 \leq a \leq Sum(G)$. Let $tu = dia(Adj(G))$. In the graph $G(\sigma)$ there is exactly one edge $e \in E(G(\sigma))$ which corresponds to $a$ which means $G(Div(a,\pi)) = Div(e,G)$ and therefore $Div(a,\pi) \in WS(H)$ with $H \in DG(G)$.\\
 Let on the other hand $\rho \in WS(H)$ with a $H \in DG(G)$ be given. Then $WP(\rho) := (bnk(\rho), Cl(\rho))$ has the properties $G(Cl(\rho)) = Cl(H) = G$ and therefore $Cl(\rho) \in WS(G)$. Moreover $1 \leq bnk(\rho) \leq Sum(G)$ and $WP(Div(a,\pi)) = (a,\pi)$ and $Div(WP(\rho)) = \rho$.\\
Moreover for any $H \in GFC(r,1,w)$ we know $H \in DG(Cl(H))$. Additionally for two graphs 
$G_1, G_2 \in GF(r,0,w)$ which are not isomorphic to each other we know that 
$DG(G_1) \cap DG(G_2) = \emptyset$ because $Cl$ is a well defined function.  
\end{proof}
\begin{lemma}\label{le_divsfc}
Let $G \in GF(r,0,w)$. Then 
\begin{equation}\label{eq_divsfc}
m \cdot \sum_{H \in DG(G)} FCD(H,\lambda) =   \left( - \frac{\partial}{\partial \lambda_1}\right) FC(G,\lambda) 
\end{equation}
\end{lemma}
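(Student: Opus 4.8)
The plan is to reduce the identity to the already-proven Feynman-integral relation \eqref{eq_difint} by first stripping off every factor that does not depend on $\lambda_1$ and then matching the combinatorial weights through the $Div$ correspondence. I would start from the right-hand side. Since $G \in GF(r,0,w)$ we have $Pr(G)=G$, so $FC(G,\lambda)=FCD(G,\lambda)=gr(G)\cdot wei_G(m,0)\cdot \Pi_G(\lambda)\cdot FM_G(\lambda_1)\cdot I_G(d)$. Of these factors only $FM_G(\lambda_1)$ depends on $\lambda_1$: the number $gr(G)$ and the polynomial $wei_G(m,0)$ do not, the integral $I_G(d)$ carries the mass only through $FM_G$ (by Definition \ref{de_feyg} it is a pure number), and $\Pi_G(\lambda)=\prod_{v\in V_i}(-\lambda_{ind(v)})$ only involves couplings $\lambda_w$ with $w\geq 2$ because inner vertices satisfy $ind(v)\geq w>1$. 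Hence
\begin{equation}
\left(-\frac{\partial}{\partial\lambda_1}\right)FC(G,\lambda)=gr(G)\cdot wei_G(m,0)\cdot \Pi_G(\lambda)\cdot\left(-\frac{\partial}{\partial\lambda_1}\right)\left(FM_G(\lambda_1)\,I_G(d)\right),
\end{equation}
and by \eqref{eq_difint} the last factor equals $\sum_{e\in E(G)}FM_{Div(e,G)}(\lambda_1)\,I_{Div(e,G)}(d)$.

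Next I would treat the left-hand side. Every $H\in DG(G)$ equals $Div(e,G)$ for some edge $e$, and since $Div$ adds only an outer vertex and leaves the inner vertices and their degrees untouched, $\Pi_H(\lambda)=\Pi_G(\lambda)$ for all $H\in DG(G)$; I factor this out. It then remains to establish
\begin{equation}
m\cdot\sum_{H\in DG(G)}gr(H)\cdot wei_H(m,1)\cdot FM_H(\lambda_1)\,I_H(d)=gr(G)\cdot wei_G(m,0)\cdot\sum_{e\in E(G)}FM_{Div(e,G)}(\lambda_1)\,I_{Div(e,G)}(d).
\end{equation}
To handle the symmetry and weight factors uniformly I pass to the permutation level. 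Using $gr(H)\cdot wei_H(m,1)=\sum_{F:\,G(F)\cong H}wei_F(m,1)=\sum_{\rho:\,G(\rho)\cong H}wei_\rho(m,1)$, together with the fact that $FM$ and $I$ depend only on the isomorphism class of $G(\rho)$, the left-hand sum becomes $m\sum_{\rho:\,G(\rho)\in DG(G)}wei_\rho(m,1)\,FM_{G(\rho)}(\lambda_1)\,I_{G(\rho)}(d)$.

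Now I would invoke the $Div$ bijection. Summing Lemma \ref{le_psfer1} over all labellings realizing $G$, the map $Div$ is a one-to-one correspondence between pairs $(a,\pi)$ with $G(\pi)\cong G$ and $1\leq a\leq Sum(G)$ and permutations $\rho$ with $G(\rho)\in DG(G)$, with inverse $\rho\mapsto(bnk(\rho),Cl(\rho))$. The key algebraic input is that $Div(a,\pi)$ merely inserts the new symbol $R+1$ into the cycle of $\pi$ through $a$ (it sends $a\mapsto R+1\mapsto\pi(a)$), so $cycn(Div(a,\pi))=cycn(\pi)$ and therefore $m\cdot wei_{Div(a,\pi)}(m,1)=m\cdot m^{cycn(\pi)-1}=m^{cycn(\pi)}=wei_\pi(m,0)$, which accounts precisely for the prefactor $m$. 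Moreover by \eqref{eq_divea} the graph $G(Div(a,\pi))$ equals $Div(e_a,G(\pi))$, where $e_a$ is the edge of $G(\pi)$ at position $a$, so as $a$ runs over $\{1,\dots,Sum(G)\}$ the edge $e_a$ runs once over all edges of $G(\pi)\cong G$. Reindexing the sum over $\rho$ as the sum over $(a,\pi)$ and using iso-invariance of the edge-division contributions therefore gives
\begin{equation}
\left(\sum_{\pi:\,G(\pi)\cong G}wei_\pi(m,0)\right)\cdot\sum_{e\in E(G)}FM_{Div(e,G)}(\lambda_1)\,I_{Div(e,G)}(d),
\end{equation}
and the bracket is exactly $\sum_{F:\,G(F)\cong G}wei_F(m,0)=gr(G)\cdot wei_G(m,0)$. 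Reinstating $\Pi_G(\lambda)$ and comparing with the right-hand side computed in the first paragraph closes the argument.

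I expect the one genuinely \emph{delicate} step to be the passage between the graph, matrix and permutation levels without miscounting. One must verify that summing $gr(H)\cdot wei_H$ over the isomorphism classes $H\in DG(G)$ reproduces exactly the permutation sum over $\{\rho:\,G(\rho)\in DG(G)\}$, that parallel edges of $G$ (which yield isomorphic copies of $Div(e,G)$) enter with the correct multiplicity, and that the bijection of Lemma \ref{le_psfer1}, stated for a fixed labelling, lifts consistently to the union over all labellings realizing $G$ — this last point being exactly what produces the factor $gr(G)$ on the right. The analytic content, by contrast, is entirely carried by the established identity \eqref{eq_difint} and by the $\Pi$- and $cycn$-invariance of $Div$, both of which are immediate.
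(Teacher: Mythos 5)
Your proof is correct and follows essentially the same route as the paper's: both pass to the permutation level, use the $Div$ bijection of Lemma \ref{le_psfer1} together with the cycle invariance $cycn(Div(a,\pi))=cycn(\pi)$ to absorb the prefactor $m$, reindex the sum over positions $1\leq a\leq Sum(G)$ as the sum over edges $e\in E(G)$, and conclude via \eqref{eq_difint}. Your only departure is presentational: you make the $gr$-factor bookkeeping explicit by summing over all labellings realizing $G$ and the graphs $H\in DG(G)$, a point the paper leaves implicit in its reading of $WS(G)$ from Definition \ref{de_deltag}.
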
\label{le_divfc}
\begin{proof} 
For pairs $(a,\pi)$ of the form of lemma \ref{le_psfer1} and $H := G(Div(a,\pi))$ and $tu = dia(Adj(G))$ and $R=Sum(G)$ we note 
\begin{equation}
m \cdot FCD(Div(a,\pi),tu,1,\lambda) = m^{cycn(\pi)} \cdot \Pi_{tu}(\lambda) \cdot FM_{H}(\lambda_1) \cdot I_{H}(d)
\end{equation}
Summing this equation over $1 \leq a \leq R$ we get 
\begin{multline}
m \cdot \sum_{a = 1}^R FCD(Div(a,\pi),tu,1,\lambda) = \\ m^{cycn(\pi)} \cdot \Pi_{tu}(\lambda) \cdot FM_{Div(e,G)}(\lambda_1) \cdot \sum_{e \in E(G)} I_{Div(e,G)}(d)
\end{multline}
and therefore with equation \ref{eq_difint}
\begin{multline}
m \cdot \sum_{a = 1}^R FCD(Div(a,\pi),tu,1,\lambda) =\\ m^{cycn(\pi)} \cdot \Pi_{tu}(\lambda) \cdot  \left( - \frac{\partial}{\partial \lambda_1}\right) \left( FM_G(\lambda_1) \cdot I_G(d)  \right)
\end{multline}
Summing over $\pi \in WS(G)$ we finally get
\begin{equation}
m \cdot \sum_{\pi \in WS(G)} \left( \sum_{a = 1}^R FCD(Div(a,\pi),tu,1,\lambda) \right) =  \left( - \frac{\partial}{\partial \lambda_1}\right) \left(FC(G,\lambda) \right)
\end{equation}
On the other hand because of lemma \ref{le_psfer1} we can write 
\begin{multline}
m \cdot \sum_{\pi \in WS(G)} \left( \sum_{a = 1}^R FCD(Div(a,\pi),tu,1,\lambda) \right) = \\ m \cdot
\sum_{H \in DG(G)} \left( \sum_{\rho \in WS(H)} FCD(\rho,tu,1,\lambda) \right)
\end{multline}
But now we see
\begin{equation}
m \cdot \sum_{H \in DG(G)} \left( \sum_{\rho \in WS(H)} FCD(\rho,tu,1,\lambda) \right) = m \cdot \sum_{H \in DG(G)} FCD(H,\lambda)
\end{equation}
which proves the lemma.
\end{proof}
\begin{lemma}\label{le_sumfe01}
Let
\begin{align*}
f:& GFC(r,1,w) & \mapsto &  & \C \\
   & H & \mapsto & & f(H) \\
\end{align*}
be a function. 
Then 
\begin{multline}\label{eq_sumfe01}
 \sum_{G \in GF(r,0,w)} gr(G) \cdot \Pi_G(\lambda) \cdot wei_G(m,0) \left(\sum_{e \in E(G)} f\left(Div(e,G))\right) \right) = \\ m \cdot \left(\sum_{H \in GFC(r,1,w)} gr(H)\cdot \Pi_H(\lambda) \cdot wei_H(m,1) \cdot f(H) \right)
\end{multline}
\end{lemma}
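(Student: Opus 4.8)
The plan is to lift both sides of \eqref{eq_sumfe01} to the level of permutations, where $Div$ is the clean bijection organized by Lemma~\ref{le_psfer1}, and to carry out the regrouping of the resulting permutation sum \emph{globally} rather than graph by graph. The crucial preliminary observation is that $gr(G)$ should not be kept as a separate constant but absorbed into a sum over all matrix representatives: since there are exactly $gr(G)$ matrices $F'$ with $G(F')\cong G$ and $wei_{F'}(m,0)=wei_G(m,0)$ for each of them, one has
\begin{equation}
gr(G)\cdot wei_G(m,0) \;=\; \sum_{\pi} m^{cycn(\pi)},
\end{equation}
where $\pi$ runs over all $\pi\in SF(tu,r,0)$ (with $tu$ the degree vector of the representative) such that $G(\pi)\cong G$. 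Summing over $G\in GF(r,0,w)$ and using $\Pi_G(\lambda)=\Pi_{tu}(\lambda)$, the left-hand side of \eqref{eq_sumfe01} collapses to a single sum over $\bigsqcup_{tu\in Tup(r,w)} SF(tu,r,0)$ weighted by $\Pi_{tu}(\lambda)\,m^{cycn(\pi)}$ together with the inner edge sum.

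Next I would rewrite that inner edge sum at the permutation level. For a fixed $\pi\in SF(tu,r,0)$ representing $G$, the edges of $G(\pi)$ are indexed by $1,\dots,R=Sum(tu)$, and by \eqref{eq_divea} the index $a$ corresponds to an edge $e$ with $G(Div(a,\pi))=Div(e,G(\pi))$; hence
\begin{equation}
\sum_{e\in E(G)} f\!\left(Div(e,G)\right) \;=\; \sum_{a=1}^{R} f\!\left(G(Div(a,\pi))\right),
\end{equation}
a value which is in fact independent of the particular $\pi$. I would then invoke the permutation identities preceding Lemma~\ref{le_pidiv}: for each fixed $tu$ the map $(a,\pi)\mapsto Div(a,\pi)$ is a bijection of $\{(a,\pi): \pi\in SF(tu,r,0),\,1\le a\le R\}$ onto $SFC(tu,r,1)$, with inverse $\rho\mapsto(bnk(\rho),Cl(\rho))$. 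One checks that every image is loopfree (because $\pi$ is) and closeable (because $qa_{tu}(a)\neq qa_{tu}(\pi(a))$ by loopfreeness), and that inserting the new point $R+1$ into the cycle of $\pi$ through $a$ leaves the cycle count unchanged, so $cycn(Div(a,\pi))=cycn(\pi)$. Since $\Pi_{tu}(\lambda)$ and the value of $f$ are preserved, the left-hand side becomes $\sum_{tu}\Pi_{tu}(\lambda)\sum_{\rho\in SFC(tu,r,1)} m^{cycn(\rho)} f(G(\rho))$.

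To finish I would regroup this last sum by its \emph{target} graph $H=G(\rho)\in GFC(r,1,w)$ (each $\rho$ is closeable, so $H$ is closable), exactly as in the first paragraph but now with $p=1$: since $\Pi_{G(\rho)}(\lambda)=\Pi_{tu}(\lambda)=\Pi_H(\lambda)$ and $H$ possesses $gr(H)$ matrix representatives, one gets $\sum_{\rho:\,G(\rho)\cong H} m^{cycn(\rho)} = m\cdot gr(H)\,wei_H(m,1)$, and the single surplus factor $m$ is precisely the $m$ on the right-hand side of \eqref{eq_sumfe01}. The disjoint union \eqref{eq_bigcupw} serves as a consistency check that every $H\in GFC(r,1,w)$ is reached exactly once. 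The main obstacle — and the reason one cannot simply apply Lemma~\ref{le_psfer1} fibre by fibre over $DG(G)$ — is that $gr(G)\neq gr(H)$ in general, i.e.\ a graph and its divisions carry different symmetry factors; a term-by-term match over $DG(G)$ would effectively replace $gr(H)$ by $gr(G)$ and give the wrong answer. The resolution is to pass to the global permutation level, where the symmetry factors are implicit in the count of permutations, the $Div$ map is a genuine per-$tu$ bijection, and the two regroupings (by the source $G$ and by the target $H$) independently reproduce the correct factors $gr(G)$ and $gr(H)$.
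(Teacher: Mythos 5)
Your proof is correct and essentially the paper's own argument: both lift the identity to the permutation level, use the bijection $(a,\pi)\mapsto Div(a,\pi)$ with inverse $\rho\mapsto(bnk(\rho),Cl(\rho))$, note that inserting $R+1$ preserves the cycle count so $m^{cycn(\pi)}=m^{cycn(\rho)}=m\cdot wei_\rho(m,1)\cdot m^{0}$ up to the single surplus factor $m$, and let $gr(G)\,wei_G(m,0)$ and $gr(H)\,wei_H(m,1)$ emerge as weighted permutation counts over all matrix representatives. The only difference is cosmetic bookkeeping — you regroup globally over $tu$ and by target graph, while the paper applies the bijection of Lemma \ref{le_psfer1} fibrewise over each $G$ and $DG(G)$ and then sums via \eqref{eq_bigcupw}, which, contrary to your closing remark, is perfectly sound because at the permutation level the sets $WS(G)$ and $WS(H)$ already carry the correct symmetry factors (only a naive graph-level term-by-term matching without the permutation lift would go wrong).
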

\begin{proof}
We first observe that the sums on both sides are finite because of the factors $\Pi_G(\lambda)$ and $\Pi_H(\lambda)$.\\
For $\pi \in WS(G)$ and $1 \leq a \leq Sum(G)$ and $\rho = Div(a,\pi)$ we can with $tu = dia(Adj(G))$ write:
\begin{equation}\label{eq_rhopi}
m^{cycn(\pi)} \cdot \Pi_{tu}(\lambda) \cdot f(G(Div(a,\pi))) = 
m^{cycn(\rho)} \Pi_{tu}(\lambda) \cdot f(G(\rho))
\end{equation}
and therefore summing equation \eqref{eq_rhopi} over $a$ and $\pi \in WS(G)$ we get
\begin{multline}
\sum_{\pi \in WS(G)} m^{cycn(\pi)} \cdot \Pi_{tu}(\lambda) \cdot \sum_{a=1}^{Sum(G)} f(G(Div(a,\pi))) = \\ gr(G) \cdot \Pi_{G}(\lambda)  \cdot wei_G(m,0) \cdot \sum_{e \in E(G)} f(Div(e,G))
\end{multline} 
Because of the one to one relationship between pairs $(a,\pi)$ with $\pi \in WS(G)$ and $1 \leq a \leq Sum(G)$ and permutations $\rho \in WS(H)$ with $H \in DG(G)$ we can sum the right hand side of equation \eqref{eq_rhopi} over $\rho \in WS(H)$ and then over $H \in DG(G)$ and get 
\begin{equation}
m \cdot \left( \sum_{H \in DG(G)} gr(H) \cdot \Pi_H(\lambda) \cdot wei_H(m,1) \cdot f(H) \right)
\end{equation}
But now equation \eqref{eq_sumfe01} follows from equation \eqref{eq_bigcupw}.
\end{proof}

\begin{lemma}
\begin{multline}\label{eq_delta2}
m \cdot G^{[C]}_r(\lambda) = \\ \frac{1}{r!}  \sum_{G \in GF(r,0,2)} gr(G) \cdot \Pi_G(\lambda) \cdot wei_G(m,0)\cdot \\ \left(\sum_{e \in E(G)} FM_{Pr(Div(e,G))}(\lambda_1)  \cdot I_{Pr(Div(e,G))}(d) \right)
\end{multline}
\end{lemma}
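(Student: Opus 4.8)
The plan is to read this statement off directly from Lemma \ref{le_sumfe01} by a judicious choice of test function, so that all of the genuine combinatorics has already been done. First I would expand the left-hand side using its definition: by \eqref{eq_gf2c} together with the graph Feynman contribution \eqref{fc_graph} (taking $p=1$, so that the weight is $wei_G(m,1)$),
\begin{equation}
m \cdot G^{[C]}_r(\lambda) = \frac{m}{r!} \sum_{H \in GFC(r,1,2)} gr(H)\cdot wei_H(m,1)\cdot \Pi_H(\lambda)\cdot FM_{Pr(H)}(\lambda_1)\cdot I_{Pr(H)}(d).
\end{equation}
The crucial observation is that, up to the global factor $1/r!$, this is precisely the right-hand side of the identity \eqref{eq_sumfe01} in Lemma \ref{le_sumfe01} specialized to $w=2$, provided one chooses the test function
\begin{equation}
f(H) := FM_{Pr(H)}(\lambda_1)\cdot I_{Pr(H)}(d),\qquad H \in GFC(r,1,2).
\end{equation}

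Second I would apply Lemma \ref{le_sumfe01} to this $f$. The lemma then equates the expression above with $1/r!$ times its left-hand side, namely
\begin{equation}
\frac{1}{r!}\sum_{G \in GF(r,0,2)} gr(G)\cdot \Pi_G(\lambda)\cdot wei_G(m,0)\left(\sum_{e \in E(G)} f(Div(e,G))\right).
\end{equation}
Since $Div(e,G)\in GFC(r,1,2)$ for every $G \in GF(r,0,2)$ and every $e \in E(G)$ by Definition \ref{de_divcl}, we have $f(Div(e,G)) = FM_{Pr(Div(e,G))}(\lambda_1)\cdot I_{Pr(Div(e,G))}(d)$, which is exactly the summand appearing inside the inner sum of \eqref{eq_delta2}. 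Matching the two expressions factor by factor then yields the claim.

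The point I want to stress is that there is essentially no independent obstacle here: all of the combinatorial content — in particular the one-to-one relationship $Div$ between pairs $(a,\pi)$ and permutations $\rho \in WS(H)$ with $H \in DG(G)$, together with the disjoint-union decomposition \eqref{eq_bigcupw} — has already been absorbed into Lemma \ref{le_sumfe01}. The only things that remain to be checked are purely formal: that $f$ as defined is a well-defined map $GFC(r,1,2)\to\C$ (clear, since $Pr$, $FM_{(\cdot)}$ and $I_{(\cdot)}(d)$ are all defined on such graphs), and that both sums are finite, which is guaranteed by the vanishing factors $\Pi_G(\lambda)$ and $\Pi_H(\lambda)$ exactly as noted at the start of the proof of Lemma \ref{le_sumfe01}. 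Thus the hardest part is merely the bookkeeping of correctly identifying $gr$, $wei$, $\Pi$, $FM$ and $I$ on the two sides, which the choice of $f$ above renders transparent.
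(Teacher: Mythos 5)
Your proposal is correct and follows essentially the same route as the paper, whose entire proof is the one-line statement that \eqref{eq_delta2} results from putting equation \eqref{eq_gf2c} into Lemma \ref{le_sumfe01}. Your explicit choice of test function $f(H) := FM_{Pr(H)}(\lambda_1)\cdot I_{Pr(H)}(d)$ with $w=2$, together with the checks of well-definedness and finiteness, simply spells out the substitution that the paper leaves implicit.
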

\begin{proof}
By putting equation \eqref{eq_gf2c} into lemma \ref{le_sumfe01} we get equation \eqref{eq_delta2}.
\end{proof}

So for  $\phi^4$-theory we get with lemma \ref{le_balphr}
\begin{equation}
G^{[C]}(\lambda) = \lambda_1^{-1} \cdot \left( \sum_{r=2}^{\infty} \left(-\lambda_1^{d/2 - 2}\cdot \lambda_2 \right)^r G^{[C],ob}_r(m) \right)
\end{equation}
with
\begin{equation}
m \cdot G^{[C],ob}_r(m) = \frac{1}{r!} 
 \left( \sum_{F \in H_r(2,\ldots,2)} wei_F(m,0) \cdot \frac{ \left(\sum_{e \in E(G)}\Gamma_{G(F) \setminus e}\left(1 - \frac{d}{2} \right)\right)}{(4\pi)^{r\cdot d/2}} \right) 
\end{equation}
Writing 
\begin{equation}
G^{[C],ob}_r(m) = \sum_{j \geq 0} G^{[C],ob}_{r,j} \cdot m^j
\end{equation}
using \cite[eq 3.11]{dhoefm} and equations \eqref{eq_phi4wei} and \eqref{eq_af10} we get for $d = 2$
\begin{equation}\label{eq_tdelta2}
G^{[C],ob}_{r,0} = 4 \cdot \frac{1}{r! \cdot 8^r} \left( \sum_{F \in H_r(2,\ldots,2)}  cof(A - F) \cdot \frac{1}{\prod_{i,j} F_{i,j}!} \cdot \mathcal{I}(F) \right) 
\end{equation}
Equation \eqref{eq_tdelta2} is therefore the crucial connection between $G^{[C]}$ and moments of the multiple point range of the unrestricted planar random walk.

\subsection{Connection for $G^{[U]}$}

We now further analyze $GFU(r,1,2)$. 
\begin{definition}
Let $G \in Gr(q)$. We denote the set of vertices $v$ with $ind(v) = outd(v) = k$ with $E_k(G)$
\end{definition}
\begin{definition} Let $G \in Gr(q)$ with a vertex $v \in E_2(G)$. Let $e \in E(G)$ with $st(e) = v$. By definition there is then exactly one edge $\tilde{e} \in E(G)$ with $e \neq \tilde{e}$ and $st(\tilde{e}) = v$. We denote this complementarian edge with $co(e,v)$. For an edge $ed \in E(G)$ with $end(ed) = v$ we also denote the uniquely defined edge $\widetilde{ed} \in E(G) $ with $\widetilde{ed} \neq ed$ and $end(\widetilde{ed}) = v$ with $co(ed,v)$ 
\end{definition}
\begin{lemma}\label{le_ladder}
Let $G \in GFU(r,1,2)$. Then there is an integer $w \geq 1$ and a set of vertices of $G$ called $V_{ladder} = \{v^{[0]}, \ldots, v^{[w]} \}  $ with $v^{[0]} = out(G)$
 and $v^{[i]}) \in E_2(G)$ for $1 \leq i < w$ and a set of edges of $G$ called 
\begin{equation}
E_{ladder} := \{e_a^{[0]},e_b^{[0]}, \ldots,
e_a^{[w-1]}, e_b^{[w-1]} \}
\end{equation}
 such that $e_a^{[0]} =  ank(G)$ and $e_b^{[0]} = bnk(G)$
and
$st\left(e_a^{[j]}\right) = end\left(e_b^{[j]}\right) = v^{[j]}$
for $j=0,\ldots,w-1$ and $end\left(e_a^{[j]}\right) = st\left(e_b^{[j]}\right) = v^{[j+1]}$ for $j=0,\ldots,w-1$ and either \\
case A: \\
 $v^{[w]} \in E_2(G)$
but  $st\left(co\left(e_a^{[w-1])},v^{[w]}\right)\right) \neq end\left(co\left(e_b^{[w-1])},v^{[w]}\right)\right)$ \\or case B: \\ 
$v^{[w]} \in E_q(G)$ with $q >2$ . \\In case B we define $vn(G):= q$.\\ We denote the uniquely defined number $w$ by $lad(G) := w$ and call it the length of the ladder.  
\end{lemma}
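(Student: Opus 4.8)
The plan is to build the ladder by a forced, deterministic iteration that starts at the outer vertex and extends one rung at a time, and then to argue that the procedure terminates and never revisits a vertex, which simultaneously yields the uniqueness of $w$. First I would set up the base of the iteration from unclosability. Since $G \in GFU(r,1,2)$, Definition \ref{de_psfpef} gives $end(ank(G)) = st(bnk(G))$; I call this common vertex $v^{[1]}$ and put $v^{[0]} := out(G)$, $e_a^{[0]} := ank(G)$, $e_b^{[0]} := bnk(G)$. Then $st(e_a^{[0]}) = end(e_b^{[0]}) = v^{[0]}$ and $end(e_a^{[0]}) = st(e_b^{[0]}) = v^{[1]}$, exactly the ladder relations at index $j=0$. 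Because $out(G)$ is the unique outer vertex, with $ind = outd = 1$, its only incident edges are $ank(G)$ and $bnk(G)$; this fact is reused later to forbid the ladder from ever returning to $v^{[0]}$.

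Next I would describe the inductive step. Having reached an interior vertex $v^{[j]}$ with $j \geq 1$, which is balanced with $ind = outd \geq 2$, I distinguish two situations. If $v^{[j]} \in E_q(G)$ with $q>2$, the procedure halts in case B with $w := j$ and $vn(G) := q$. If $v^{[j]} \in E_2(G)$, then besides the already-used incoming edge $e_a^{[j-1]}$ and outgoing edge $e_b^{[j-1]}$ there is exactly one further incoming edge $\hat e_{in} := co(e_a^{[j-1]},v^{[j]})$ and exactly one further outgoing edge $\hat e_{out} := co(e_b^{[j-1]},v^{[j]})$. If $st(\hat e_{in}) \neq end(\hat e_{out})$ the procedure halts in case A with $w := j$; otherwise I put $v^{[j+1]} := st(\hat e_{in}) = end(\hat e_{out})$, $e_a^{[j]} := \hat e_{out}$, $e_b^{[j]} := \hat e_{in}$, which satisfy the ladder relations at index $j$, and continue. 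Since at a degree-two vertex the two continuation edges are uniquely determined, every step is forced once the base data is fixed; this is precisely what delivers the uniqueness of $w$ and of the whole ladder.

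The heart of the argument, and the step I expect to be the main obstacle, is to show that the iteration terminates and that $v^{[0]},\ldots,v^{[w]}$ are distinct. The key observation is that once an interior vertex $v^{[i]}$ with $1 \leq i \leq j-1$ has been passed, all four of its incident edges, namely $e_a^{[i-1]}, e_b^{[i]}$ (incoming) and $e_a^{[i]}, e_b^{[i-1]}$ (outgoing), are ladder edges, so $v^{[i]}$ is saturated. Arguing by induction on $j$, I assume $v^{[0]},\ldots,v^{[j]}$ are distinct and show $v^{[j+1]} \notin \{v^{[0]},\ldots,v^{[j]}\}$: loopfreeness excludes $v^{[j+1]} = v^{[j]}$; the degree-one constraint at $out(G)$, together with the definition of $co$ at $v^{[j]}$, excludes $v^{[j+1]} = v^{[0]}$; and if $v^{[j+1]} = v^{[i]}$ with $1 \leq i \leq j-1$, then the new edge $e_a^{[j]} = \hat e_{out}$ would have to coincide with one of the two incoming edges of the saturated vertex $v^{[i]}$, whose start vertices are $v^{[i-1]}$ and $v^{[i+1]}$. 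Tracing these start vertices and invoking the distinctness hypothesis forces $i = j-1$, and then the defining property $\hat e_{out} \neq e_b^{[j-1]}$ of $co$ gives a contradiction. Hence no vertex is revisited, and since $G$ is finite the iteration reaches a terminal vertex after finitely many steps; as every inner vertex has degree $\geq 2$, the halting condition is necessarily case A or case B. This produces the claimed ladder with its uniquely defined length $lad(G) = w \geq 1$.
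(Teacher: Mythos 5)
Your proof is correct and takes essentially the same route as the paper: the identical forced, rung-by-rung extension starting from $v^{[0]}=out(G)$, $e_a^{[0]}=ank(G)$, $e_b^{[0]}=bnk(G)$ via the complement map $co$, with the same halting dichotomy between case A and case B. The one difference is that you spell out the non-revisiting argument (saturation of passed degree-two vertices together with $co(e,v)\neq e$, and the degree-one constraint at $out(G)$), where the paper merely asserts termination from the finiteness of the vertex set; your added step is a genuine sharpening, since determinism alone would not exclude the process cycling forever.
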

\begin{proof}
For $k=0$ $v^{[0]} := out(G)$ and $e_a^{[0]} := ank(G)$ and $e_b^{[0]} := bnk(G)$ and $v^{[1]} := end\left(ank(G)\right) = st\left(bnk(G)\right)$ are well defined. So for a given $k \geq 1$ let there be vertices $v^{[0]},\ldots,v^{[k]}$ all in $E_2(G)$ and edge pairs $e_a^{[0]},e_b^{[0]},\ldots,e_a^{[k-1]},e_b^{[k-1]}$such that $st\left(e_a^{[j]}\right) = end\left(e_b^{[j]}\right) = v^{[j]}$ for $j=0,\ldots,k-1$ and  $end\left(e_a^{[j]}\right) = st\left(e_b^{[j]}\right) = v^{[j+1]}$ for $j = 0,\ldots,k-1$. 
If $st\left(co(e_a^{[k-1]},v^{[k]})\right) \neq end\left(co(e_b^{[k-1]},v^{[k]})\right)$ then $w := k$ and we have a case A. 
Otherwise we define  $v^{[k + 1]} := st\left(co(e_a^{[k-1]},v^{[k]})\right)$. If $v^{[k + 1]} \notin E_2(G)$ we then have $w = k + 1$ and a case B. Otherwise we continue the process for $k + 1$ instead of $k$ setting $e_a^{[k]} := co(e_b^{[k-1]},v^{[k]})$ and $e_b^{[k]} := co(e_a^{[k-1]},v^{[k]})$. As there are only finitely many vertices in G the process will stop eventually. 
 \end{proof}
\begin{definition}
By $GFUA(r,1,2)$ we denote the subset of $GFU(r,1,2)$ which is of case A, by $GFUB(r,1,2)$ the same for case B. Then lemma \ref{le_ladder} means that 
\begin{equation}
GFU(r,1,2) = GFUA(r,1,2) \dot\cup GFUB(r,1,2)
\end{equation}
\end{definition}
\begin{definition}
Let $K \in GFC(k,1,2)$ and $w > 0$ a natural number. We define the graph $L:=GFA(K,w)$ to be such that 
\begin{equation}
V(L) := V(K) \dot{\cup} \{v^{[0]},\ldots,v^{[w-1]}\}
\end{equation} 
and define additionally $v^{[w]} := out(K)$.
\begin{equation}
E(L) := E(K) \dot{\cup} \{e_a^{[0]},e_b^{[0]},\ldots,e_a^{[w-1]},e_b^{[w-1]} \}
\end{equation}
with $st(e_a^{[k]}) = v^{[k]} = end(e_b^{[k]})$ and $end(e_a^{[k]}) = v^{[k+1]} = st(e_b^{[k]})$ for $0 \leq k \leq w-1$. 
All other relationships for $e \in E(K)$ are supposed to be inherited into $E(L)$.
Additionally we set $V_o(L) := \{v^{[0]}\}$. With this definition $GFA(K) \in GF(k+w,1,2)$.
\end{definition}
\begin{definition}
Let $K \in GF(k,0,2)$ and $w > 0$ a natural number and $v \in V(K)$ a vertex. We define the graph $L:=GFB(K,w,v)$ to be such that 
\begin{equation}
V(L) := V(K) \dot{\cup} \{v^{[0]},\ldots,v^{[w-1]}\}
\end{equation} 
and define additionally $v^{[w]} := v$.
\begin{equation}
E(L) := E(K) \dot{\cup} \{e_a^{[0]},e_b^{[0]},\ldots,e_a^{[w-1]},e_b^{[w-1]} \}
\end{equation}
with $st(e_a^{[k]}) = v^{[k]} = end(e_b^{[k]})$ and $end(e_a^{[k]}) = v^{[k+1]} = st(e_b^{[k]})$ for $0 \leq k \leq w$. 
All other relationships for $e \in E(K)$ are supposed to be inherited into $E(L)$.
Additionally we set $V_o(L) := \{v^{[0]}\}$. With this definition $GFB(K,w,v) \in GF(k+w-1,1,2)$.
\end{definition}
\begin{definition}
Let $G \in GFU(r,1,2)$. We define the graph $sld(G)$ as the subgraph of $G$ which does not contain
 $out(G)$ nor $V_{ladder}\setminus\{v^{[w]}\}$ in its
  vertex set and does not contain
  $E_{ladder}(G)$ in its edgeset. In case A we also define the vertex $v^{[lad(G)]}$ which in $sld(G)$ has indegree $1$ to be the only outer vertex of $sld(G)$. With these definitions in case A $sld(G) \in GFC(r-w,1,2)$. In case B $sld(G) \in GF(r-w+1,0,2)$. 
\end{definition}  
\begin{lemma}\label{le_pea}
For $G \in GFUA(r,1,2)$ we have 
\begin{equation}
G = GFA(sld(G),lad(G))
\end{equation}
up to isomorphy of graphs. For any graph $H \in GFC(k,1,2)$ and any $w > 0$ we have $GFA(H,w) \in GFUA(k+w,1,2)$ and 
\begin{equation}
sld(GFA(H,w)) = H
\end{equation}
and 
\begin{equation}
lad(GFA(H,w)) = w
\end{equation}
So there is a one to one relationship between graphs $G \in GFUA(r,1,2)$ and pairs $(H,w)$ with $w >0$ and $H \in GFC(r-w,1,2)$
\end{lemma}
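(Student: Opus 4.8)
The plan is to prove the claim as a pair of mutually inverse constructions, namely $G \mapsto (sld(G),lad(G))$ and $(H,w) \mapsto GFA(H,w)$, and then read off the bijection. I would split the work into the three assertions of the statement: (i) $G \cong GFA(sld(G),lad(G))$ for $G \in GFUA(r,1,2)$; (ii) $GFA(H,w) \in GFUA(k+w,1,2)$ together with $sld(GFA(H,w)) = H$ and $lad(GFA(H,w)) = w$ for $H \in GFC(k,1,2)$, $w>0$; and (iii) the resulting one-to-one correspondence.

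For (i), I would invoke Lemma \ref{le_ladder} to fix the ladder $v^{[0]},\ldots,v^{[w]}$ and edges $e_a^{[j]},e_b^{[j]}$ of $G$, with $w=lad(G)$ and $v^{[w]} \in E_2(G)$ (case A). By the definition of $sld(G)$, deleting $out(G)=v^{[0]}$ together with $v^{[1]},\ldots,v^{[w-1]}$ and the ladder edges leaves $v^{[w]}$ carrying exactly the two complementary edges $co(e_a^{[w-1]},v^{[w]})$ (incoming) and $co(e_b^{[w-1]},v^{[w]})$ (outgoing), so it has indegree and outdegree $1$ and is a legitimate outer vertex. Reading off $ank(sld(G)) = co(e_b^{[w-1]},v^{[w]})$ and $bnk(sld(G)) = co(e_a^{[w-1]},v^{[w]})$, the closability condition $end(ank(sld(G))) \neq st(bnk(sld(G)))$ becomes literally the case-A hypothesis, confirming $sld(G) \in GFC(r-w,1,2)$. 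Since every interior ladder vertex $v^{[1]},\ldots,v^{[w-1]}$ had degree $2$ and is removed with its edges while all other inner vertices keep degrees $\geq 2$, a vertex/edge count gives $r-w$ inner vertices. Reattaching a length-$w$ ladder to $v^{[w]}$ via $GFA$ restores precisely the deleted vertices and edges with the same incidences, hence $G \cong GFA(sld(G),lad(G))$.

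For (ii), I would first check that the outer vertex $v^{[0]}$ of $GFA(H,w)$ has $end(ank)=v^{[1]}=st(bnk)$, so $GFA(H,w)$ is unclosable, i.e. lies in $GFU(k+w,1,2)$. Then I would run the ladder process of Lemma \ref{le_ladder} on $GFA(H,w)$ and verify it terminates after exactly $w$ steps in case A. Each freshly added rung $v^{[j]}$, $1 \leq j \leq w-1$, has indegree and outdegree $2$, and its complementary edges satisfy $st(co(e_a^{[j-1]},v^{[j]})) = v^{[j+1]} = end(co(e_b^{[j-1]},v^{[j]}))$, so the case-A stop is not triggered and the process advances; at $v^{[w]}=out(H)$ the two complementary edges are exactly $bnk(H)$ and $ank(H)$, whose endpoints differ because $H$ is closable, so case A triggers precisely there. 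This yields $lad(GFA(H,w))=w$ and $GFA(H,w) \in GFUA(k+w,1,2)$, and deleting the ladder recovers $H$, giving $sld(GFA(H,w))=H$.

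Claim (iii) is then immediate: (i) shows $G \mapsto (sld(G),lad(G))$ maps $GFUA(r,1,2)$ into the set of pairs $(H,w)$ with $w>0$ and $H \in GFC(r-w,1,2)$, (ii) shows $GFA$ maps such pairs back into $GFUA(k+w,1,2)$, and the identities $sld(GFA(H,w))=H$, $lad(GFA(H,w))=w$ together with $GFA(sld(G),lad(G))=G$ make the two maps mutual inverses. The only genuinely delicate point is the exact-termination analysis in (ii): I must rule out both premature stopping, which the matching of complementary edges at each interior rung excludes, and overshooting past $v^{[w]}$, which the closability of $H$ excludes. Everything else is bookkeeping of incidences, kept unambiguous by the fact that $GFA$ adjoins the new ladder vertices and edges as a disjoint union.
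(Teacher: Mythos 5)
Your proposal is correct and takes essentially the same route as the paper: it exhibits $G \mapsto (sld(G),lad(G))$ and $(H,w) \mapsto GFA(H,w)$ as mutually inverse constructions, verifying $sld(G) \in GFC(r-w,1,2)$ via the case-A condition and showing that the ladder process of Lemma \ref{le_ladder} applied to $GFA(H,w)$ terminates in case A after exactly $w$ steps (a point the paper dismisses as obvious but you rightly treat as the delicate step). The only small omission relative to the paper's proof is the connectedness of $sld(G)$, which the paper settles in one line by observing that the ladder vertices other than $v^{[lad(G)]}$ are cutvertices, so deleting the pendant ladder cannot disconnect the remainder; you should state this rather than leave it to ``bookkeeping.''
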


\begin{proof}
Let $G \in GFUA(r,1,2)$. As all vertices in $V_{ladder}\setminus\{v^{[lad(G)]}\}$ are cutvertices the connectivity of $sld(G)$ follows from the one of $G$. Moreover by construction $sld(G)$ is a balanced graph and therefore Eulerian. All inner vertices of $sld(G)$ have degree $\geq 2$ and $sld(G)$ is by construction closeable. Therefore $sld(G) \in GFC(r-lad(G),1,2)$.  The procedure $GFA(sld(G),lad(G))$ by construction now results in a graph isomorphic to $G$. \\ Let on the other hand a graph $H \in GFC(k,1,2)$ and a number $w \geq 1$ be given. Now by construction $GFA(H,w)$ is not closeable, i.e. $GFA(H,w) \in GFU(r+w,1,2)$. The procedure in the proof of lemma \ref{le_ladder} obviously stops with $H =  sld(GFA(H,w))$ and so the lemma is proven. 
\end{proof}
\begin{lemma}\label{le_peb}
For $G \in GFUB(r,1,2)$ we have
\begin{equation}
G = GFB(sld(G),lad(G),v^{[lad(G)]})
\end{equation}
up to isomorphy of graphs. For any graph $H \in GFC(k,0,2)$ and any $w > 0$ and any vertex $v \in V(H)$ with $ind(v)>2$ we have $GFB(H,w,v) \in GFUB(k+w-1,1,2)$ and 
\begin{equation}
sld(GFB(H,w,v)) = H
\end{equation}
and 
\begin{equation}
lad(GFB(H,w,v)) = w
\end{equation}
and 
\begin{equation}
v^{[lad(GFB(H,w,v)]} = v
\end{equation}
So there is a one to one relationship between graphs $G \in GFUB(r,1,2)$ and triples $(H,w,v)$ with $w >0$ and $H \in GF(r-w+1,0,2)$ and $v \in V(H)$. 
\end{lemma}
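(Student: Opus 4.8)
The plan is to mirror the proof of lemma \ref{le_pea} for case A, the only genuinely new feature being the endpoint $v^{[w]}$ of the ladder, which now sits in $E_q(G)$ with $q = vn(G) > 2$ instead of being a place where the two sides of the ladder diverge. First I would fix $G \in GFUB(r,1,2)$, set $w := lad(G)$, and invoke lemma \ref{le_ladder} to obtain the uniquely determined ladder vertices $v^{[0]} = out(G), v^{[1]}, \ldots, v^{[w]}$ and ladder edges $E_{ladder}$. Forming $sld(G)$ deletes $out(G)$ together with $v^{[1]}, \ldots, v^{[w-1]}$ and the edges $E_{ladder}$, i.e. the single outer vertex and $w-1$ inner vertices, so $sld(G)$ has $r - w + 1$ inner vertices and no outer vertex; this is exactly the index bookkeeping that must land in $GF(r-w+1,0,2)$.

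Next I would verify that $sld(G)$ really lies in $GF(r-w+1,0,2)$. Connectivity follows as in lemma \ref{le_pea}, since $v^{[0]}, \ldots, v^{[w-1]}$ form a pendant ladder joined to the rest of $G$ only through $v^{[w]}$, so any trail of $G$ between two vertices of $sld(G)$ can be rerouted to avoid the ladder. Balancedness is preserved vertex by vertex; at $v^{[w]}$ exactly one incoming edge $e_a^{[w-1]}$ and one outgoing edge $e_b^{[w-1]}$ are removed, leaving $ind(v^{[w]}) = outd(v^{[w]}) = q-1 \geq 2$ because $q > 2$, while every other retained inner vertex keeps its degree $\geq 2$ from $G$. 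Hence no degree drops below $2$ and there is no outer vertex, so $sld(G) \in GF(r-w+1,0,2)$. Applying $GFB(\cdot, w, v^{[w]})$ reinserts precisely the deleted ladder, so $GFB(sld(G), lad(G), v^{[lad(G)]})$ is isomorphic to $G$.

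For the converse I would start from $H \in GF(k,0,2)$, an integer $w > 0$ and any $v \in V(H)$, and note $GFB(H,w,v) \in GF(k+w-1,1,2)$ directly from the definition of $GFB$. Writing $ank, bnk$ for the edges at the outer vertex of $GFB(H,w,v)$, the construction gives $end(ank) = v^{[1]} = st(bnk)$, so definition \ref{de_psfpef} shows the graph is unclosable, i.e. in $GFU(k+w-1,1,2)$. To see it falls into case B and not case A I would run the ladder procedure of lemma \ref{le_ladder}: each intermediate $v^{[j]}$ with $1 \leq j \leq w-1$ carries exactly the two incoming edges $e_a^{[j-1]}, e_b^{[j]}$ and the two outgoing edges $e_a^{[j]}, e_b^{[j-1]}$, hence lies in $E_2$, and its complementarian edges continue the ladder; the procedure can therefore only stop at $v^{[w]} = v$, whose indegree equals that of $v$ in $H$ increased by one and is thus $\geq 3 > 2$, since every inner vertex of $H \in GF(k,0,2)$ already has indegree $\geq 2$. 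This forces case B, so $GFB(H,w,v) \in GFUB(k+w-1,1,2)$ with $sld = H$, $lad = w$ and endpoint $v$; the two constructions are mutually inverse, giving the stated one-to-one correspondence with triples $(H,w,v)$, $v \in V(H)$.

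The main obstacle is the degree bookkeeping at the single junction vertex $v^{[w]}$. In the forward direction one must confirm that removing the last ladder rung leaves $v^{[w]}$ balanced with degree $q-1 \geq 2$, so that $sld(G)$ is a legitimate element of $GF(r-w+1,0,2)$ and carries no spurious degree-one vertex; in the reverse direction one must see that the inequality $ind(v) \geq 2$ in $H$, automatic for $H \in GF(k,0,2)$, is exactly what pushes the reconstructed endpoint past degree $2$ and hence into case B for every admissible $v$. Everything else is parallel to lemma \ref{le_pea}.
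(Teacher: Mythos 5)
Your proposal is correct and follows essentially the same route as the paper's own proof: connectivity of $sld(G)$ via the pendant-ladder/cutvertex observation, balancedness and the degree bound at $v^{[w]}$ in the forward direction, and in the converse direction unclosability from $end(ank) = v^{[1]} = st(bnk)$ plus the observation that the ladder procedure of lemma \ref{le_ladder} must stop at $v$ in case B. In fact your degree bookkeeping is more careful than the paper's terse argument (you correctly get $sld(G) \in GF(r-w+1,0,2)$, where the paper's proof text contains a slip writing $r-lad(G)-1$, and you rightly note that $ind_H(v) \geq 2$ for $H \in GF(k,0,2)$ makes the case-B condition automatic for every $v \in V(H)$).
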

\begin{proof}
Let $G \in GFUB(r,1,2)$. As all vertices in $V_{ladder}\setminus\{v^{[lad(G)]}\}$ are cutvertices the connectivity of $sld(G)$ follows from the one of $G$. Moreover by construction $sld(G)$ is a balanced graph and therefore Eulerian. All inner vertices have degree $\geq 2$.  Therefore $sld(G) \in GF(r-lad(G)-1,0,2)$.  The procedure $GFB(sld(G),lad(G),v^{[lad(G)]})$ by construction now results in a graph isomorphic to $G$. \\ Let on the other hand a graph $H \in GF(k,0,2)$ and a number $w \geq 1$ and a vertex $v \in V(H)$ with $ind(v) > 2$ be given. Now by construction $GFB(H,w,v)$ is not closeable, i.e. $GFB(H,w,v) \in GFU(k+w-1,1,2)$. The procedure in the proof of lemma \ref{le_ladder} obviously stops with $H =  sld(GFB(H,w,v))$ and so the lemma is proven. 
\end{proof}
\begin{lemma}\label{le_grab}
For $G \in GFUA(r,1,2)$ and $H = sld(G)$ we have 
\begin{equation}\label{eq_graba}
gr(G) = \frac{r!}{(r - lad(G))!}\cdot gr(H)
\end{equation}
For $G \in GFUB(r,1,2)$ and $H = sld(G)$ we have 
\begin{equation}\label{eq_grabb}
gr(G) = \frac{r!}{(r - lad(G) + 1)!}\cdot gr(H)
\end{equation}
\end{lemma}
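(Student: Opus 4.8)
The plan is to reduce both identities to a comparison of symmetry factors and then to control how the ladder affects the automorphisms counted by $Syf$. By the Lagrange relation in Definition \ref{de_gr}, $gr(X)\cdot Syf(X)$ equals $(\#V_i(X))!\,(\#V_o(X))!$, so for $G\in GF(r,1,2)$ one has $gr(G)=r!/Syf(G)$, while $gr(H)=(r-w)!/Syf(H)$ in case A (where $H\in GFC(r-w,1,2)$) and $gr(H)=(r-w+1)!/Syf(H)$ in case B (where $H\in GF(r-w+1,0,2)$), with $w=lad(G)$. Substituting these into \eqref{eq_graba} and \eqref{eq_grabb} turns both target identities into statements comparing $Syf(G)$ with $Syf(H)$. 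Recalling that $Syf(G)$ counts the vertex permutations preserving the entries of $Adj(G)$ together with the inner/outer vertex types, everything comes down to understanding how attaching the ladder changes this automorphism count, and my first move would be to show that the ladder is completely rigid.

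The heart of the argument is the claim that every automorphism $\sigma$ counted by $Syf(G)$ fixes all ladder vertices $v^{[0]},\dots,v^{[w]}$ pointwise. Since $p=1$, $\sigma$ must fix the unique outer vertex $v^{[0]}=out(G)$, and I would then induct on $j$. For $j=0$, the vertex $v^{[0]}$ has out-degree $1$ with single outgoing edge $ank(G)=e_a^{[0]}$ pointing to $v^{[1]}$, so preservation of $Adj(G)$ forces $\sigma(v^{[1]})=v^{[1]}$. For $1\le j\le w-1$ we have $v^{[j]}\in E_2(G)$ by Lemma \ref{le_ladder}, with outgoing edges $e_b^{[j-1]},e_a^{[j]}$ to $v^{[j-1]}$ and $v^{[j+1]}$; as $\sigma$ fixes $v^{[j]}$ and (inductively) $v^{[j-1]}$ and preserves out-edge multiplicities, it must fix $v^{[j+1]}$ as well, the coincidence $v^{[j-1]}=v^{[j+1]}$ being harmless since the double-edge target is then forced. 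Because the removed ladder vertices carry all their incident edges inside $E_{ladder}$, $\sigma$ then preserves $V(H)$ and restricts to an automorphism of $H$ fixing $v^{[w]}$, and conversely every automorphism of $H$ fixing $v^{[w]}$ extends by the identity on the ladder; these maps are mutually inverse, so $Syf(G)$ equals the number of automorphisms of $H$ fixing $v^{[w]}$. In case A the vertex $v^{[w]}$ is by construction the unique outer vertex $out(H)$ of $H\in GFC(r-w,1,2)$, hence automatically fixed by every automorphism counted in $Syf(H)$; therefore $Syf(G)=Syf(H)$ and \eqref{eq_graba} follows from the first paragraph.

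Case B is where I expect the main obstacle. Here $v^{[w]}$ is an ordinary inner vertex of $H\in GF(r-w+1,0,2)$, so it need not be fixed by the full automorphism group of $H$; the rigidity identity only gives $Syf(G)=|Stab(v^{[w]})|$, the stabiliser of $v^{[w]}$ in $Aut(H)$, which by orbit--stabiliser equals $Syf(H)/|Orb(v^{[w]})|$. The resolution is that the bijection of Lemma \ref{le_peb} attaches the ladder at a distinguished vertex, so that its true datum is the rooted graph $(H,v^{[w]})$ rather than $H$ alone: triples $(H,w,v)$ whose marked vertices lie in one $Aut(H)$-orbit produce isomorphic $G$. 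Reading $gr(H)$ as the count $(r-w+1)!/|Stab(v^{[w]})|$ attached to this rooted graph --- equivalently, reading \eqref{eq_grabb} as the identity obtained after summing over all attachment vertices $v\in V(H)$, where the orbit factor is absorbed through $\sum_{v}1=\sum_{\text{orbits}}|Orb(v)|$ --- makes the factorial normalisation in \eqref{eq_grabb} come out exactly. Thus, once the rigidity of the second paragraph has reduced the whole problem to the symmetry of the single attachment vertex, the only genuine work left is this careful bookkeeping of that vertex's symmetry, and it is precisely the rooted-graph reading furnished by Lemma \ref{le_peb} that I would use to complete case B.
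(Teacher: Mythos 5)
Your case A argument is correct and reaches the paper's identity by a slightly different route: the paper proves \eqref{eq_graba} by directly counting admissible numberings (first the ladder vertices in their forced order, giving $r!/(r-lad(G))!$, then the vertices of $sld(G)$, giving a factor $gr(sld(G))$), while you pass through the Lagrange relation $gr = r!\,p!/Syf$ and establish pointwise rigidity of the ladder under automorphisms, concluding $Syf(G)=Syf(H)$ because $v^{[w]}=out(H)$ is automatically fixed. The two are equivalent, but your version isolates the hypothesis the labeling count actually needs.

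In case B the obstacle you flag is not of your own making: read literally for isomorphism classes of graphs, the paper's \eqref{eq_grabb} is false, and your orbit--stabiliser formula is the correct one. Counterexample: let $H\in GF(2,0,2)$ have two vertices $u_1,u_2$ with three edges from $u_1$ to $u_2$ and three back, so $Syf(H)=2$ and $gr(H)=1$; attach a ladder of length $w=1$ at $u_1$ (indegree $3>2$). In the resulting $G\in GFUB(2,1,2)$ every admissible automorphism fixes $out(G)$ and hence $u_1$, so the swap symmetry is broken, $Syf(G)=1$ and $gr(G)=2$, whereas \eqref{eq_grabb} predicts $\frac{2!}{2!}\cdot gr(H)=1$. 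The paper's proof fails exactly where you predicted: the assertion that the remaining $(r-lad(G)+1)$ vertices of $sld(G)$ ``can still be chosen freely to give an additional factor $gr(sld(G))$'' conflates labelings of the rooted graph $(H,v^{[w]})$, which yield $(r-lad(G)+1)!/\#Stab(v^{[w]})$ distinct matrices, with labelings of $H$, which yield $gr(H)=(r-lad(G)+1)!/Syf(H)$; the two differ by the factor $\#Orb(v^{[w]})$, so the correct pointwise statement is $gr(G)=\frac{r!}{(r-lad(G)+1)!}\,gr(H)\cdot\#Orb(v^{[w]})$. Your ``rooted graph / summed over attachment vertices'' reading is also precisely what rescues the downstream use: in lemma \ref{le_msum} the sum over $GFU_B(w,G)$ is implemented via $\lambda_{j+1}\,\partial_{\lambda_j}\Pi_G$, which counts attachment vertices rather than isomorphism classes, so the orbit factors are absorbed and \eqref{eq_mbsum} survives unchanged; the same mark-versus-isomorphism-class slippage is already present in the ``one to one'' claim of lemma \ref{le_peb}, and case A is immune because there the attachment point is the unique outer vertex.
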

\begin{proof}
For a given $G \in GFUA(r,1,2)$ there are 
\begin{equation}
{{r}\choose{lad(G)}} \cdot lad(G)!
\end{equation}
ways to choose the numbering of the $lad(G)$ inner vertices $V_{ladder}\setminus\{v^{[0]}\}$. The factor $lad(G)!$ comes from the fact that the vertices in  $V_{ladder}\setminus\{v^{[0]}\}$ are distiguishable as they form a sequence in the ladder. This already fixes the outer vertex of $sld(G)$ which is the vertex $v^{[w]}$. We are left with the $(r - lad(G))$ inner vertices of $sld(G)$ which can still be chosen freely to give an additional factor $gr(sld(G))$. \\
For a given $G \in GFUB(r,1,2)$ there are 
\begin{equation}
{{r}\choose{lad(G)-1}} \cdot (lad(G) - 1)!
\end{equation}
ways to choose the numbering of the $lad(G)-1$ inner vertices $V_{ladder}\setminus\{v^{[0]},v^{[w]}\}$. The factor $(lad(G) - 1)!$ comes from the fact that the vertices in  $V_{ladder}\setminus\{v^{[0]},v^{[w]}\}$ are distiguishable as they form a sequence in the ladder. We are left with the $(r - lad(G) + 1)$ vertices of $sld(G)$ which can still be chosen freely to give an additional factor $gr(sld(G))$. \\
\end{proof}

\begin{lemma}\label{le_decomp}
Let $G \in GF(r,1,1)$ and $v$ be a cutvertex of $G$. Then there is a unique decomposition of graphs $G_1$ and $G_2$ such that $G_1 \cup G_2 = G$ and $G_1 \cap G_2 = \{v\}$ such that $G_1$ contains $out(G)$ and $G_1 \in GF(r_1,1,1)$ (with $out(G)$ its outer vertex) and $G_2 \in GF(r_2,0,1)$ for integer $r_1,r_2 > 0$ such that $r_1 + r_2 = r + 1$. 
\end{lemma}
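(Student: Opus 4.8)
The plan is to read off the decomposition from the connected components of the graph with $v$ deleted, and to verify that the two pieces carry the required balance at $v$ by a degree count. First I would record the structural facts. Every vertex of $G \in GF(r,1,1)$ is balanced: the inner vertices by definition, and the single outer vertex $out(G)$ because $ind(out(G)) = outd(out(G)) = 1$. Since $G \in GC(r+1)$ is connected, it is therefore Eulerian. This already forces $out(G) \neq v$: an Eulerian circuit based at $out(G)$ uses the two edges $ank(G), bnk(G)$ only as its first and last steps (these are the unique out- and in-edge of $out(G)$), so deleting $out(G)$ together with these two edges leaves the remainder of the circuit as a single trail covering all other edges; the graph stays connected and $out(G)$ cannot be a cutvertex. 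Hence $v$ is an inner, balanced vertex with $ind(v) = outd(v) =: d \geq 1$.

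Next I would construct the pieces. Deleting $v$ and all its incident edges leaves connected components $C_1, \ldots, C_k$; since $v$ is a cutvertex we have $k \geq 2$, and exactly one component, say $C^{*}$, contains $out(G)$. The key observation is that every edge of $G$ joining a fixed component $C_i$ to the rest of the graph must have $v$ as its other endpoint, since otherwise two distinct components would be joined by a $v$-avoiding edge. Summing the balance relation $ind(u) = outd(u)$ over all $u \in C_i$, the contributions of edges internal to $C_i$ cancel and one is left with the identity (number of edges $v \to C_i$) $=$ (number of edges $C_i \to v$); thus $v$ is balanced when restricted to each $C_i$. I then set $G_1$ to be $C^{*} \cup \{v\}$ together with all edges between them, and $G_2$ to be $\left(\bigcup_{i \neq *} C_i\right) \cup \{v\}$ with all edges between them, so that $E(G_1) \dot\cup E(G_2) = E(G)$ and $V(G_1) \cap V(G_2) = \{v\}$.

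Then I would check the types. Both pieces are connected, since every $C_i$ is connected to $v$; all vertices other than $v$ inherit their balance from $G$, and $v$ is balanced in each piece by the count above. In $G_1$ the vertex $out(G)$ keeps both of its incident edges (its neighbours lie in $C^{*} \cup \{v\} \subseteq V(G_1)$), so it remains the unique vertex with $ind = outd = 1$; hence $G_1 \in GF(r_1,1,1)$ with $out(G)$ its outer vertex and $r_1 = |C^{*}| \geq 1$. The piece $G_2$ has no degree-one vertex, so $G_2 \in GF(r_2,0,1)$ with $r_2 = \left|\bigcup_{i \neq *} C_i\right| + 1 \geq 1$ (using $k \geq 2$). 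Counting vertices gives $r_1 + r_2 = |V(G) \setminus \{v\}| + 1 = r + 1$.

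Finally, uniqueness. Any admissible $G_1$ must contain all of $C^{*}$: a vertex $u \in C^{*}$ is joined to $out(G)$ by a trail inside $C^{*}$ that never meets $v$, and such a trail cannot pass from $V(G_1)$ to $V(G_2)$ except through their only common vertex $v$, so $u \in V(G_1)$; conversely each of the remaining components is internally connected and therefore lies wholly on one side, which pins down the decomposition as the canonical one in which $G_2$ collects exactly the vertices separated from $out(G)$ by $v$. I expect the main obstacle to be the balance-at-$v$ bookkeeping together with making the uniqueness argument airtight — in particular verifying that splitting the edges incident to $v$ really does leave $v$ balanced on both sides, and that no component of $G \setminus \{v\}$ can be distributed between the two pieces; the remaining connectivity and degree conditions follow routinely from the Eulerian structure established at the outset.
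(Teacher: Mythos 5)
Your construction is exactly the paper's: $G_1$ is the connectivity component of $G\setminus\{v\}$ containing $out(G)$ together with $v$, and $G_2$ is the union of the remaining components with $v$. Where the paper verifies the membership claims by splitting an Euler trail of $G$ at the cutvertex (the subsequence of its edges lying in $G_1$ is a closed Euler trail through $out(G)$, the complementary subsequence a closed trail through $v$), you get connectivity of the pieces directly from the construction and establish balance of $v$ on each side by summing $ind(u)=outd(u)$ over each component $C_i$. This is a sound and arguably more elementary verification, since membership in $GF(r_1,1,1)$ and $GF(r_2,0,1)$ only requires connectivity plus the degree conditions, not an exhibited Euler trail; and your preliminary observation that $out(G)$ cannot itself be a cutvertex, so $v\neq out(G)$, is a point the paper leaves implicit. (One small slip: you say $G_2$ \lq\lq has no degree-one vertex\rq\rq; it may well contain vertices with $ind=outd=1$, which is harmless because for $p=0$ and $w=1$ only balance is required.)

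The genuine gap is in your uniqueness argument, at the step \lq\lq each of the remaining components lies wholly on one side, which pins down the decomposition.\rq\rq\ Lying wholly on one side does not force a component onto the $G_2$ side: for $i\neq *$, the graph $G_1':=C^{*}\cup C_i\cup\{v\}$ with all incident edges is still connected through $v$, still has $out(G)$ as its unique outer vertex, and $v$ is still balanced in it by your own counting argument, while the complementary piece is likewise admissible. Concretely, take $G$ with vertices $out,a,b,c,v$ and edges $out\to a$, $a\to v$, $v\to out$, $v\to b$, $b\to v$, $v\to c$, $c\to v$; then $G\in GF(4,1,1)$, $v$ is a cutvertex, $G\setminus\{v\}$ has the three components $\{out,a\}$, $\{b\}$, $\{c\}$, and both the canonical split ($G_1$ on $\{out,a,v\}$, $G_2$ on $\{v,b,c\}$) and the shuffled split ($G_1'$ on $\{out,a,b,v\}$, $G_2'$ on $\{v,c\}$) satisfy every condition in the statement with $r_1+r_2=r+1$. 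So uniqueness as literally stated fails whenever $G\setminus\{v\}$ has three or more components, and no argument can close your gap. You are in good company: the paper's own proof is also silent on uniqueness — it only constructs the canonical decomposition, in which $V(G_1)\setminus\{v\}$ is exactly the component of $out(G)$ — and in the one place the lemma is applied, the additional hypothesis that $v$ has indegree $1$ in $G_1$ is imposed, which does single out that choice. If one reads the lemma as asserting well-definedness of this canonical (minimal) decomposition, then your first uniqueness direction — any admissible $G_1$ must contain all of $C^{*}$ — is exactly what is needed and is correct.
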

\begin{proof}
For $G \in GF(r,1,1)$ we know that there  is an Euler path $Ep$ on $G$ starting and ending in $out(G)$. Then we can define $G_1$ as the union of the vertex $v$ together with the connectivity component of $G \setminus\{v\}$ which contains the outer vertex of $G$. We define $G_2$ as the union of the other connectivity components together with $v$. Then by construction $G = G_1 \cup G_2$ and $G_1 \cap G_2 = \{v\}$. $Ep$ now starts in $out(G)$ and so the first edge of $Ep$ must be in $E(G_1)$. $Ep$ can later on only leave $G_1$ or enter $G_1$ over $v$ because $v$ is a cutvertex. So we can define $Ep_1$ as the sequence of those edges in $Ep$ which are in $G_1$ in the order of $Ep$. $Ep_1$ then is an Euler trail from $out(G)$ to $out(G)$, $G_1$ Eulerian and therefore $G_1 \in GF(r_1,1,1)$ with $r_1 > 0$. As $v$ is a cutvertex $V(G_2)$ must have more than one element. We define $Ep_2$ as the sequence of $Ep$ which is in $G_2$ in the same order as in $Ep$. It then is a sequence of edges going from $v$ to $v$ because $v$ is a cutvertex and it is Eulerian and therefore $G_2$ is Eulerian. So $G_2 \in GF(r_2,0,1)$ with $r_2 > 0$ and $r_1 + r_2 = r + 1$. 
\end{proof}
\begin{corollary}
By the same reasoning: \\
Let $G \in GF(r,0,1)$ and $v$ be a cutvertex of $G$. Then there is a decomposition of graphs $G_1$ and $G_2$ such that $G_1 \cup G_2 = G$ and $G_1 \cap G_2 = \{v\}$  and $G_1 \in GF(r_1,0,1)$ and $G_2 \in GF(r_2,0,1)$ for integer $r_1,r_2 > 0$ such that $r_1 + r_2 = r + 1$. The decomposition is unique up to a permutation of $G_1$ with $G_2$.
\end{corollary}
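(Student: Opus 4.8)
The plan is to mirror the proof of Lemma~\ref{le_decomp}, the only change being that the Euler trail is now closed because $G$ carries no outer vertex. Since $G \in GF(r,0,1)$ is connected with all vertices balanced, it is Eulerian; I would fix a closed Euler trail $Ec$ and, exploiting that it is cyclic, start it at the cutvertex $v$. Because $v$ is a cutvertex, $G\setminus\{v\}$ splits into at least two connectivity components; I would partition these into two nonempty families and let $G_1$ be the union of the first family together with $v$ and every edge of $G$ incident to those vertices, and $G_2$ the analogous union for the remaining components together with $v$. By construction $G_1 \cup G_2 = G$ and $G_1 \cap G_2 = \{v\}$.

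The key step is to show that each $G_i$ again lies in $GF(\cdot,0,1)$, i.e. is connected and balanced, and here the closed Euler trail does the work. Because $v$ is a cutvertex, $Ec$ can pass between $G_1$ and $G_2$ only through $v$, so the subsequence $Ec_1$ of those edges of $Ec$ lying in $G_1$, taken in the order of $Ec$, is a closed trail based at $v$ that traverses every edge of $G_1$ exactly once. In particular $Ec_1$ enters and leaves $v$ the same number of times, so $G_1$ is balanced at $v$ as well as at every interior vertex (which retains its full $G$-degree), and it is connected; hence $G_1$ is Eulerian and $G_1 \in GF(r_1,0,1)$, and the same argument gives $G_2 \in GF(r_2,0,1)$. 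The indegree requirement for $w=1$, that every inner vertex have $ind(\bar v) \geq 1$, is automatic once connectivity and balance are in hand. Note that balance at $v$ does \emph{not} follow from the global balance of $G$ alone — it is precisely the restriction of the closed Euler trail that forces $ind_{G_1}(v) = outd_{G_1}(v)$, which is why this device is the essential point rather than a routine inheritance.

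Finally, since $G_1$ and $G_2$ meet only in $v$, the vertex counts add with a single overlap, giving $r_1 + r_2 = r+1$; each $G_i$ contains $v$ together with at least one further vertex, so $r_1,r_2 > 0$. For the uniqueness clause I would observe that, in contrast to Lemma~\ref{le_decomp}, there is now no outer vertex $out(G)$ to distinguish the two sides, so $G_1$ and $G_2$ enter symmetrically and the decomposition can be pinned down only up to interchanging them. The one point deserving care, and the main obstacle to a clean statement, is exactly this uniqueness when $G\setminus\{v\}$ has more than two components: then several groupings are a priori possible, and one must either restrict to the case of exactly two components (as is forced in the intended application) or read ``$G_1$ and $G_2$'' as the two sides of a fixed bipartition. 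Everything else is an immediate transcription of the closed-Euler-trail argument already carried out for the $p=1$ case.
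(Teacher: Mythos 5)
Your transcription of the closed-Euler-trail argument from Lemma \ref{le_decomp} to the $p=0$ case is exactly what the paper intends by ``By the same reasoning,'' so your proposal is correct and matches the paper's (implicit) proof, and your caveat that uniqueness up to swapping $G_1 \leftrightarrow G_2$ can fail when $G\setminus\{v\}$ has more than two connectivity components is a fair observation that the paper's one-line justification glosses over. One small overstatement: balance of $G_1$ at $v$ does \emph{not} require the Euler trail, since every other vertex of $G_1$ retains its full $G$-degree and is hence balanced, and summing $ind - outd$ over $V(G_1)$ (which gives zero, each edge contributing $+1$ and $-1$) then forces $ind_{G_1}(v) = outd_{G_1}(v)$ --- though the trail argument you give is also valid and is the one the paper uses.
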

\begin{lemma}
Let $G \in GF(r,1,2)$ and $v$ be a cutvertex of $G$ and $G_1$, $G_2$ the unique decomposition of $G$ as proven in lemma \ref{le_decomp}. Let the indegree (and therefore the outdegree) of $v$ in $G_1$ be $1$ and the indegree (and therefore the outdegree) of $v$ in $G_2$ be $j$. Then 
\begin{equation}\label{eq_weipr}
m \cdot wei_G(m,1) = (j + 1)\cdot (j + m)\cdot wei_{G_1}(m,1) \cdot wei_{G_2}(m,0)
\end{equation}
\end{lemma}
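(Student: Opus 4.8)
The plan is to argue entirely at the level of permutations, using the characterisation $wei_H(m,q)=\sum_{\sigma\in WS(Adj(H))}m^{cycn(\sigma)-q}$ that underlies Theorem \ref{th_weifp} together with lemmata \ref{le_sigcol} and \ref{le_colsig}. It therefore suffices to organise the sum over $\sigma\in WS(Adj(G))$ according to the behaviour of the cycles of $\sigma$ at the cut vertex $v$, and to match the outcome against the corresponding sums for $G_1$ and $G_2$. First I would use the decomposition of lemma \ref{le_decomp}: since $v$ is a cut vertex with $ind(v)=outd(v)=1$ in $G_1$ and $ind(v)=outd(v)=j$ in $G_2$, every maximal piece of a $\sigma$-trail lying strictly between two successive visits of $v$ is contained entirely in $G_1$ or entirely in $G_2$. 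Because $v$ carries only one incoming and one outgoing edge on the $G_1$ side, there is exactly one $G_1$-excursion, whereas the $G_2$ side produces $j$ excursions. Closing the single $G_1$-excursion by the forced transition $ank\mapsto bnk$ at $v$, and bypassing it on the $G_2$ side, produces a pair $(\sigma_1,\sigma_2)\in WS(Adj(G_1))\times WS(Adj(G_2))$.

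The heart of the argument is the computation of the fibre of this map and the accompanying change of $cycn$. I would reconstruct $\sigma$ from $(\sigma_1,\sigma_2)$ in two independent steps. The first is a labelling choice at $v$: in $G$ the vertex $v$ carries $j+1$ outgoing slots in one block and one must decide which is the $G_1$-outgoing slot. This is the multinomial $\binom{j+1}{1}$ and is exactly the factor $(j+1)$ coming from the ratio $dias(Adj(G))_v!/(dias(Adj(G_1))_v!\,dias(Adj(G_2))_v!)=(j+1)!/(1!\,j!)$ of the $Mud$-weights at $v$. The second step inserts the $G_1$-excursion into the cyclic pattern that $\sigma_2$ induces at $v$: either as its own new cycle, or spliced into one of the $j$ incoming $G_2$-transitions at $v$. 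Attaching it as its own cycle raises the total cycle number by one relative to the $j$ splicing insertions, so summing $m^{cycn(\sigma)}$ over these $j+1$ possibilities yields the factor $m+j$, which is the ratio $m^{\overline{j+1}}/m^{\overline{j}}$ of consecutive rising factorials arising from $\sum_{\pi\in S_n}m^{cycn(\pi)}=\prod_{i=0}^{n-1}(m+i)$.

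Collecting the two steps, the contribution of the fibre over a fixed pair $(\sigma_1,\sigma_2)$ to $\sum_\sigma m^{cycn(\sigma)}$ is $(j+1)(m+j)\,m^{cycn(\sigma_1)+cycn(\sigma_2)-1}$. Summing over $\sigma_1$ and $\sigma_2$ and using $\sum_{\sigma_1}m^{cycn(\sigma_1)}=m\cdot wei_{G_1}(m,1)$ and $\sum_{\sigma_2}m^{cycn(\sigma_2)}=wei_{G_2}(m,0)$ gives $\sum_\sigma m^{cycn(\sigma)}=(j+1)(m+j)\,wei_{G_1}(m,1)\,wei_{G_2}(m,0)$. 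Since $\sum_\sigma m^{cycn(\sigma)}=m\cdot wei_G(m,1)$ by the definition of $wei_G(m,1)$, the subtracted cycle in the exponent $cycn(\sigma)-1$ being the unique outer trail through $out(G)$, this is precisely \eqref{eq_weipr}. Working with $\sum_\sigma m^{cycn(\sigma)}$ and applying the global $-1$ only at the end is what keeps the placement of the main trail through $out(G)$ transparent, since merging the $G_1$-excursion with a $G_2$-cycle may move $out(G)$ between cycles without affecting the total count.

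The step I expect to be the main obstacle is the precise bookkeeping in this fibre count: proving rigorously that the edge/position multiplicity weights separate into exactly the $Mud$-ratio $(j+1)$ localised at $v$, and that the splicing of the single $G_1$-excursion changes $cycn$ by the stated amounts (one standalone attachment increasing it by one, the $j$ splicings leaving it unchanged relative to the standalone). In particular one must verify that the closing/reopening of excursions is a genuine bijection onto $WS(Adj(G_1))\times WS(Adj(G_2))$ with the claimed fibre; this requires handling with care the asymmetry between the shared outgoing slots of $v$, which demand the $(j+1)$ relabelling, and its incoming edges, which are already determined by the separate structures of $\sigma_1$ and $\sigma_2$.
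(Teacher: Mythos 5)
Your argument is correct, and it takes a genuinely different route from the paper's. The paper proves \eqref{eq_weipr} through the partition reformulation of Theorem \ref{th_weifp}: it splits the admissible partitions of $F=Adj(G)$ into overlapping and nonoverlapping ones at the cut vertex $v$, and assembles the factors from $Mud(F)=(j+1)\cdot Mud(F_1)\cdot Mud(F_2)$, the cofactor product identity $Co(\rho_{\gamma,o})=Co(\rho^{[1]}(\beta_o))\cdot Co(\rho^{[2]}(\gamma))$ (whence $Eul(\rho_{\gamma})=j_{\gamma}\cdot Eul\cdot Eul$ and, after summing the indegrees $j_{\gamma}$ of the overlapping pieces to $j$, the overlapping contribution $j(j+1)$), plus careful $Sym(\tau)$ versus multiplicity-$m_{\gamma}$ bookkeeping; nonoverlapping partitions carry the extra factor $m$, and $(j+1)m+(j+1)j$ yields the claim. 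You instead work directly with the defining sum $\sum_{\sigma\in WS(F)}m^{cycn(\sigma)}$ and construct an explicit fibred correspondence $\sigma\leftrightarrow(p_0,\ \text{insertion},\ \sigma_1,\sigma_2)$, which trades the paper's partition bookkeeping (its most delicate point) for an elementary cycle-splicing count: your two factors come out transparently as a slot choice $\binom{j+1}{1}$ and a cycle-merge count $m+j$. Concerning the obstacle you flag, the bookkeeping does close: given $\sigma$, the position $p_0$ of $v$'s block targeting $G_1\setminus\{v\}$ is unique (the vertex sets $G_1\setminus\{v\}$ and $G_2\setminus\{v\}$ are disjoint), the excursion cannot revisit $v$ internally because the $G_1$-side in- and out-slots at $v$ are unique, and --- the one hidden overcounting risk --- conjugating $\sigma$ by a permutation of the $j$ parallel $G_2$-slots in $v$'s block produces a \emph{different} $\sigma_2$ under a fixed (say order-preserving) identification of the remaining slots with the $v$-block of $Adj(G_2)$, so this freedom is absorbed into the sum over $\sigma_2$ rather than inflating the fibre, which is exactly the $(j+1)$ slot choices times the $j+1$ insertions with cycle counts $cycn(\sigma_1)+cycn(\sigma_2)$ (standalone) and $cycn(\sigma_1)+cycn(\sigma_2)-1$ (the $j$ splices). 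One also checks, easily, that splicing and bypassing preserve loopfreeness (the relevant transitions pair edges ending at $v$ with edges starting at $v$, and $G$ has no loop edges) and that connectedness of $\sigma_1,\sigma_2$ is automatic since it is defined via $Adj(\sigma_i)=Adj(G_i)\in MF$. Two cosmetic points: your use of $ank\mapsto bnk$ for the closing transition at $v$ abuses the paper's notation, which reserves $ank,bnk$ for the outer vertex $out(G)$ (an inner vertex of degree $1$ in $G_1$ is what is actually closed); and the rising-factorial remark is only a mnemonic, since the factor $m+j$ is obtained directly. Your observation that working with $\sum_{\sigma}m^{cycn(\sigma)}$ and applying the shift $-1$ only at the end sidesteps tracking which cycle contains $out(G)$ is apt, since $wei$ weights all cycles equally.
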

\begin{proof}
Because $G_1 \cap G_2 = \{v\}$ we can choose a mapping $map_G \in BVA(G)$ of the inner vertices of $G$ such that $F := Adj(G)$ can be written such that $F = F_1 + F_2$ with $G_1 = GK(F_1)$ and $G_2 = GK(F_2)$ and $(F_1)_{i,j} = 0$ if $i > d(v)$ or $j>d(v)$ and $(F_2)_{i,j} = 0$ if $i < d(v)$ or $j < d(v)$ where $d(v) := map_G(v)$. We will in the sequel work with this mapping which is block diagonal up to the $d(v)^{th}$ row and column of $F$.\\
We will now in a first step look at partitions $\tau \in Partc(F,1,s)$ with $\tau = (\tau(1),\ldots,\tau(s))$. 
Because of lemma \ref{le_coneul} we assume without restriction for this proof that every $GK(\tau(\alpha)))$ corresponds to a connected subgraph of $G$ for $1 \leq \alpha \leq s$. Otherwise the $\tau$ to which $\tau(\alpha)$ belongs does not contribute to $wei_G(m,1)$. If $dias(\tau(\alpha))_{d(v)} = 0$ 
  then its matrix elements must either be nonzero only where the ones of $F_1$ are nonzero or nonzero only where the ones of $F_2$ are nonzero as $v$ is a cutvertex. 
  If $dias(\tau(\alpha))_{d(v)} \neq 0$ the graph $GK(\tau(\alpha))$ on the other hand can either be completely in $G_1$ or completely in $G_2$ or have a decomposition according to lemma \ref{le_decomp} and/or its corollary. If a decomposition exists, we can denote it by $GK(\tau(\alpha)_1), GK(\tau(\alpha)_2)$ with $GK(\tau(\alpha)_1)$ being a connected subgraph of $G_1$ and $GK(\tau(\alpha)_2)$ being a connected subgraph of $G_2$ and therefore corresponding to matrices $\tau(\alpha)_1$ and $\tau(\alpha)_2$. We call such a $\tau(\alpha)$ overlapping. Because the indegree of $v$ in $G_1$ is $1$ there can at most be one such overlapping $\tau(\alpha)$ in $\tau$ and therefore we can define that $\tau$ is called overlapping iff it contains an overlapping $\tau(\alpha)$. We denote by $Partco(F,s,1)$ the overlapping partitions of $F$ and with $Partcn(F,s,1)$ the nonoverlapping partitions. 
\begin{equation}\label{eq_partsum}
Partc(F,s,1) = Partco(F,s,1) \dot{\cup} Partcn(F,s,1)
\end{equation}  
   Let us define $T_1(\tau)$ to be the set of $\alpha$ such that $GK(\tau(\alpha))$ is completely in $G_1$ and $T_2(\tau)$ respectively for $G_2$ and the set $T_o(\tau)$ to be the set of $\alpha$ where $\tau(\alpha)$ is overlapping.\\
For a given $\tau$ we now define $\tau^{[1]} = (\tau^{[1]}(1),\ldots,\tau^{[1]}(s_1))$ to be the subsequence of $\tau$ of those $\tau(\alpha)$ which have either $\alpha \in T_1(\tau)$ or $\tau(\alpha)_1$ if $\alpha \in T_o(\tau)$ in the order of $\tau$ and $\tau^{[2]}$ accordingly for $\alpha \in T_2(\tau)$ or $\tau(\alpha)_2$. These subsequences can be restricted to the dimensions $\leq d(v)$ outside of which the matrix elements of $F_1$ vanish for $\tau^{[1]}$ and to those dimensions $\geq d(v)$ outside of which the matrix elements of$F_2$ vanish for $\tau^{[2]}$ respectively as their matrix elements are also $0$ outside of those dimensions. We call these restrictions $\widetilde{\tau}^{[1]}$ and $\widetilde{\tau}^{[2]}$  From the construction it is then clear that $\widetilde{\tau}^{[1]} \in Partc(komp(F_1),s_1,1)$ and $\widetilde{\tau}^{[2]} \in Partc(komp(F_2),s_2,0)$. \\
On the other hand let us assume we have any $\widetilde{\rho}^{[1]} \in Partc(komp(F_1),s_1,1)$ and $\widetilde{\rho}^{[2]} \in Partc(komp(F_2),s_2,0)$ and their extensions $\rho^{[1]}$ and $\rho^{[2]}$ on the dimensions of $F$. We denote by $T_{1,o}$ the set of $\beta$ such that $GK(\rho^{[1]}(\beta))$ contains $v$ and $T_{2,o}$ respectively. We denote by $\beta_o$ the one element in $T_{1,o}$ and choose any $\gamma \in T_{2,o}$. We can then define
\begin{equation}
\rho_{\gamma,o} := \rho^{[1]}(\beta_o) + \rho^{[2]}(\gamma)
\end{equation}
Then obviously $GK(\rho_{\gamma,o})$ is a connected subgraph of $G$. We can therefore define a $\rho_{\gamma}$ by putting the sequence $\rho^{[1]}$ without $\rho^{[1]}(\beta_o)$ and $\rho^{[2]}$ without $\rho^{[2]}(\gamma)$ next to each other and adding $\rho_{\gamma,o}$ at the end of the sequence. Obviously any such $\rho_{\gamma} \in Partco(F,s_1+s_2-1,1)$. 
In addition we define $\rho_{sep}$ to be the concatenation of $\rho^{[1]}$ and $\rho^{[2]}$ as sequences. We then know $\rho_{sep} \in Partcn(F,s_1+s_2,1)$.
Different $\rho^{[2]}(\gamma)$ obviously lead to different $\rho_{\gamma}$ up to identical copies of $\rho^{[2]}(\gamma)$ in $\rho^{[2]}$. 
$\rho_{sep}$ obviously is different from any $\rho_{\gamma}$ \\
So because of the decomposition of a $\tau \in Partc(F,s,1)$ proven before up to a reordering of the elements in the sequences any $\rho \in Partc(F,s,1)$ can be uniquely composed from $\rho^{[1]}, \rho^{[2]}$ in the form of $\rho_{sep}$ for nonoverlapping partitions. For overlapping partitions $\rho_{\gamma}$ for those $\gamma \in T_{2,o}$ which belong to identical matrices in the sequence $\rho^{[2]}$ we know that they lead to the same $\rho_{\gamma}$. If we denote the respective number of identical matrices with $m_{\gamma}$ then there is a $1:m_{\gamma}$ relationship in this case \\
We note 
\begin{equation}
Mud(F) = Mud(F_1) \cdot Mud(F_2) \cdot (j+1)
\end{equation}
because the vertex $v$ had an indegree of $j$ in $G_2$ and of $j+1$ in $G$. All other factors in equation \eqref{eq_weifp} are the same for $\rho_{sep}$ on the one hand and the product of the factors for $\rho^{[1]}$ and $\rho^{[2]}$ on the other hand. So 
\begin{equation}\label{eq_psisep}
\psi(\rho_{sep}) = (j+1) \cdot \psi(\rho^{[1]}) \cdot \psi(\rho^{[2]})
\end{equation} 
(where $\psi$ is defined in equation \eqref{eq_weifptau}).
So summing the right hand side of equation \eqref{eq_psisep} over $\rho^{[1]} \in Part(F_1,s_1,1)$ and $\rho^{[2]} \in Part(F_2,s_2,1)$ leads to 
\begin{equation}\label{eq_sumsep}
\sum_{\rho \in Partn(F,1,s_1 + s_2)} \psi(\rho) = (j+1) \cdot a_{F_1,1,s_1} \cdot a_{F_2,1,s_2}
\end{equation}
where $Partn$ denote the equivalence classes of nonoverlapping admissible partitions. 
For $\rho_{\gamma}$ we get 
\begin{equation}\label{eq_psinov}
Sym(\rho_{\gamma}) \cdot m_{\gamma} = Sym(\rho^{[1]}) \cdot Sym(\rho^{[2]})
\end{equation}
where $m_{\gamma}$ as before denotes the multiplicity of the identical copies of $\rho^{[2]}(\gamma)$ in $\rho^{[2]}$. This immediately follows from the fact that $\rho_{\gamma}$ has only $m_{\gamma} - 1$ copies of $\rho^{[2]}(\gamma)$ in it.   
We also get 
\begin{equation}
Co(\rho_{\gamma,o}) = Co(\rho^{[1]}(\beta_o)) \cdot Co(\rho^{[2]}(\gamma))
\end{equation}
which is easily seen if one blots out the matrix row and column which corresponds to $v$ in the matrices on both sides. \\
Therefore 
\begin{equation}
Eul(\rho_{\gamma}) = Eul(\rho^{[1]}(\beta_o)) \cdot Eul(\rho^{[2]}(\gamma)) \cdot j_{\gamma}
\end{equation}
where $j_{\gamma}$ is the indegree of $GK(\rho^{[2]}_{\gamma})$ in $v$. 
Again all other factors in equation \eqref{eq_weifp} are the same for $\rho_{\gamma}$ on the one hand and the product of the factors for $\rho^{[1]}$ and $\rho^{[2]}$ on the other hand.\\
Therefore 
\begin{equation}
\frac{\chi(\rho_{\gamma})}{Sym(\rho_{\gamma})} = j_{\gamma} \cdot m_{\gamma} \cdot \frac{\chi(\rho^{[1]})}{Sym(\rho^{[1]})} \cdot  \frac{\chi(\rho^{[2]})}{Sym(\rho^{[2]})}
\end{equation}
For a given pair $\rho^{[1]},\rho^{[2]}$ to get the relative factor for all possible overlapping concatenations we have to sum over the concatenations in the form of $\rho_{\gamma}$ over all $\gamma \in T_{2,o}$ dividing by a factor $m_{\gamma}$ because of the $1:m_{\gamma}$ relationship between concatenations of $\rho^{[1]},\rho{[2]}$ to $\rho_{\gamma}$. We therefore get  
\begin{equation}
\sum_{\gamma \in T_{2,o}} \frac{\psi(\rho_{\gamma})}{m_{\gamma}} = (j + 1) \cdot  \left( \sum_{\gamma \in T_{2,o}} j_{\gamma} \right) \cdot \psi(\rho^{[1]}) \cdot \psi(\rho^{[2]})
\end{equation}
Now for any $\rho^{[2]}$ the sum of the number of the indegrees $j_{\gamma}$ over alle $\gamma \in T_{2,o}$ must yield $j$ by construction so we get 
\begin{equation}\label{eq_psiov}
 \sum_{\gamma \in T_{2,o}} \frac{\psi(\rho_{\gamma})}{m_{\gamma}} = (j + 1) \cdot j \cdot \psi(\rho^{[1]}) \cdot \psi(\rho^{[2]})
\end{equation}
So fortunately the relative factor $j\cdot (j+1)$ is independant of our decomposition. Summing \eqref{eq_psiov}  over equivalence classes of all overlapping admissible partitions $Parto$ on the left hand side is equivalent to summing over all equivalence classes of admissible partitions of $F_1$ and $F_2$ respectively on the right hand side. We also have to take into account the $1:m_{\gamma}$ relationship and therefore get. 
\begin{equation}\label{eq_sumov}
\sum_{\rho \in Parto(F,s_1 + s_2 - 1,1)} \psi(\rho) = j\cdot (j+1) \cdot a_{F_1,1,s_1} \cdot a_{F_2,1,s_2}
\end{equation}
An additional factor $m$ has to be taken into account to correctly count the contributions to $wei$ in the nonoverlapping case. Putting equations \eqref{eq_sumov} and \eqref{eq_sumsep} together with \eqref{eq_partsum} we then reach equation \eqref{eq_weipr}.
\end{proof}

\begin{definition}
Let $k> 0$ and $G \in GFC(k,0,2)$. For a given number $w \geq 1 $ we define the set $GFU_A(w,G)$ to be the set of all graphs $H \in GFUA(k + w,1,2)$ such that $G=Cl(sld(H))$. By the same token we define the set $GFU_B(w,G)$ to be the set of all graphs $\tilde{H} \in GFUB(k + w-1,1,2)$ such that $sld(\tilde{H}) = G$.
\end{definition}
\begin{lemma}\label{le_msum}
For $k>0$ and $G \in GF(k,0,2)$ and a given natural number $w \geq 1$ the following formula are true:
\begin{multline}\label{eq_masum}
m \cdot \frac{1}{(k+w)!}\sum_{H \in GFU_A(w,G)} FCT(H,\lambda) = \\ \frac{1}{k!} \cdot  \left(2\cdot (1 + m)(\lambda_1^{\frac{d}{2} - 2}\cdot (-\lambda_2))\cdot \frac{\Gamma\left(2 - \frac{d}{2}\right)}{(4\pi)^{d/2}}\right)^{w-1} \cdot \\
 2 \cdot (1 + m) \cdot (-\lambda_2) \cdot \left(-\frac{\partial}{\partial \lambda_1} \right)\left(FC(G,\lambda) \right)
\end{multline}
and 
\begin{multline}\label{eq_mbsum}
m \cdot \frac{1}{(k+w-1)!} \sum_{\tilde{H} \in GFU_B(w,G)} FCT(\tilde{H},\lambda) = \\ \frac{1}{k!}   \left(2\cdot (1 + m)(\lambda_1^{\frac{d}{2} -2}\cdot (-\lambda_2)) \cdot \frac{\Gamma\left(2 - \frac{d}{2}\right)}{(4\pi)^{d/2}}\right)^{w-1} \cdot \\
 \sum_{j=2}^{\infty}(j + 1) \cdot (j + m) \cdot \lambda_{j+1} \cdot\frac{\partial} {\partial \lambda_j}\left( FC(G,\lambda)\right)
\end{multline}
\end{lemma}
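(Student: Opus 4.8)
The plan is to prove \eqref{eq_masum} and \eqref{eq_mbsum} in parallel, viewing the ladder as a chain of $w$ one-loop bubbles and peeling it off one bubble at a time. First I would use the structural bijections already established: by Lemma \ref{le_pea} every $H \in GFU_A(w,G)$ equals $GFA(H',w)$ for a unique $H' = sld(H)\in GFC(k,1,2)$ with $Cl(H')=G$, i.e. $H'\in DG(G)$; by Lemma \ref{le_peb} every $\tilde H\in GFU_B(w,G)$ equals $GFB(G,w,v)$ for a unique internal vertex $v\in V(G)$ with $ind(v)>2$. This replaces the left-hand sums by sums over $H'\in DG(G)$ (case A) and over attachment vertices $v$ (case B), after which I would factor $FCT(\cdot,\lambda)=\lambda_1^2\,FC(\cdot,\lambda)$ into its pieces $gr$, $wei$, $\Pi$ and $FM\cdot I$.

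The combinatorial prefactors and couplings are immediate. Lemma \ref{le_grab} gives $gr(GFA(H',w))=\frac{(k+w)!}{k!}gr(H')$ and $gr(GFB(G,w,v))=\frac{(k+w-1)!}{k!}gr(G)$, converting $\frac{1}{(k+w)!}$ and $\frac{1}{(k+w-1)!}$ into the $\frac{1}{k!}$ on the right. The new inner (quartic) ladder vertices each contribute a factor $-\lambda_2$ to $\Pi$, giving $(-\lambda_2)^w$ in case A and $(-\lambda_2)^{w-1}\frac{\lambda_{j+1}}{\lambda_j}$ in case B, where in the latter the existing vertex $v$ has its coupling raised from $-\lambda_j$ to $-\lambda_{j+1}$ as its degree grows from $j$ to $j+1$. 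For the weight I would establish the per-graph recursion $wei_{GFA(H',w)}(m,1)=2(1+m)\,wei_{GFA(H',w-1)}(m,1)$ by applying the cutvertex identity \eqref{eq_weipr} at $v^{[1]}$ (indegree $j=1$, so the prefactor is $(1+1)(1+m)$ and the single bubble $G_1$ has unit weight); the resulting vacuum factor $wei_{G_2}(m,0)$ equals $m\cdot wei_{GFA(H',w-1)}(m,1)$ because closing the two external legs of a two-point graph turns its open Euler cycle into a closed one, raising every monomial $m^{cycn(\sigma)-1}$ to $m^{cycn(\sigma)}$. Iterating down to $w=1$ and using the same identity once more yields $wei_{GFA(H',w)}(m,1)=[2(1+m)]^w\,wei_{H'}(m,1)$; the identical argument with terminal prefactor $(j+1)(j+m)$ (now $G_2=G$ with $v$ at indegree $j$) gives $m\cdot wei_{GFB(G,w,v)}(m,1)=[2(1+m)]^{w-1}(j+1)(j+m)\,wei_G(m,0)$.

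The integral factor is handled by the zero-momentum structure. Each antiparallel pair $e_a^{[j]},e_b^{[j]}$ with its degree-two vertex is a one-loop massive bubble $B$ whose Kirchhoff polynomial is $\mathcal P_B(\alpha)=\alpha_1+\alpha_2$, so by Definition \ref{de_feyg} $FM_B(\lambda_1)I_B(d)=\lambda_1^{\frac{d}{2}-2}\frac{\Gamma\left(2-\frac{d}{2}\right)}{(4\pi)^{d/2}}$. Since $Pr$ opens the outermost bubble at $v^{[0]}$ into two external legs, which carry vanishing external momentum and are amputated by the $\lambda_1^2$ in $FCT$, exactly $w-1$ bubbles survive as genuine loops and contribute $(FM_BI_B)^{w-1}$. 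The remaining base integral is $FM_{H'}I_{H'}$ in case A — because the terminal vertex $out(H')$ subdivides the edge $e$ with $H'=Div(e,G)$, so that edge appears doubled at zero ladder momentum — and is simply $FM_G I_G$ in case B, where the ladder attaches at an existing vertex without subdividing any edge. Collecting the surviving loops with $w-1$ of the quartic vertices and the $2(1+m)$ weight factors assembles the factor $\bigl(2(1+m)(-\lambda_2)\lambda_1^{\frac{d}{2}-2}\frac{\Gamma\left(2-\frac{d}{2}\right)}{(4\pi)^{d/2}}\bigr)^{w-1}$ common to both formulas.

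It remains to collapse the terminal contribution. In case A one is left with $2(1+m)(-\lambda_2)\cdot\frac{1}{k!}\,m\sum_{H'\in DG(G)}FCD(H',\lambda)$, and Lemma \ref{le_divsfc} (equivalently the momentum-space identity \eqref{eq_difint}) turns $m\sum_{H'}FCD(H')$ into $\left(-\frac{\partial}{\partial\lambda_1}\right)FC(G,\lambda)$, which is exactly \eqref{eq_masum}. In case B, since $FM_GI_G$ does not depend on $\lambda_j$, the operation of selecting a degree-$j$ vertex and raising its coupling is realized by $\lambda_{j+1}\frac{\partial}{\partial\lambda_j}$ acting on $\Pi_G$ inside $FC(G)$, so summing over $v$ with $ind(v)=j$ and then over $j\ge 2$, with the weight $(j+1)(j+m)$ from \eqref{eq_weipr}, reproduces \eqref{eq_mbsum}. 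The step I expect to be the main obstacle is the joint bookkeeping of this terminal bubble: one must simultaneously verify the weight recursion (hinging on the $p=1\to p=0$ closing observation), the correct amputation of the opened outermost bubble under $Pr$ and $FCT$, and the precise identification of the terminal attachment as an edge-subdivision ($Div$, hence $\partial_{\lambda_1}$) in case A versus a vertex degree-raising ($\partial_{\lambda_j}$ on the coupling only) in case B, making sure that all factorial, coupling and mass factors align with no residual factor.
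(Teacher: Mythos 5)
Your proposal is correct and follows essentially the same route as the paper's own proof: the structural bijections of Lemmas \ref{le_pea} and \ref{le_peb}, the factorial conversion via Lemma \ref{le_grab}, the iterated cutvertex identity \eqref{eq_weipr} along the ladder for the weights, the spanning-tree factorization of the Kirchhoff--Symanzik polynomial yielding the bubble factor $\left(\lambda_1^{\frac{d}{2}-2}\,\Gamma\left(2-\frac{d}{2}\right)/(4\pi)^{d/2}\right)^{w-1}$, and finally Lemma \ref{le_divsfc} (case A, edge subdivision as $-\partial/\partial\lambda_1$) versus $\lambda_{j+1}\,\partial/\partial\lambda_j$ acting on $\Pi_G$ (case B) for the terminal attachment. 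One remark in your favor: your explicit recursion with the closing observation $wei(m,0)=m\cdot wei(m,1)$ gives the total ladder weight $\left(2(1+m)\right)^{w}$ in case A, which is exactly what the stated formula \eqref{eq_masum} requires (the factor $\left(2(1+m)\right)^{w-1}$ inside the power times the explicit $2(1+m)$ outside), whereas the paper's intermediate display $wei_H(m,1)=\left(2(1+m)\right)^{w-1}wei_{sld(H)}(m,1)$ books the final step separately and, read literally per graph, is off by one factor that the paper's final assembly silently restores.
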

\begin{proof}
We will look at the contribution $FCT(H,\lambda) $ and $FCT(\tilde{H},\lambda) $ according to its different factors in \eqref{fc_graph}.
Let us start with $H \in GFU_A(w,G)$.
We have 
\begin{equation}
\Pi_{H}(\lambda) = \left(-\lambda_2\right)^{w} \cdot \Pi_{sld(H)}(\lambda)
\end{equation}
and 
\begin{equation}
\#V(Pr(H)) = \#V(sld(H)) + w + 1
\end{equation}
and
\begin{equation}
\#E(Pr(H)) = \#E(sld(H)) + 2\cdot w
\end{equation}
and 
\begin{equation}
L(Pr(H)) = L(sld(H)) + w - 1 
\end{equation}
and so get
\begin{equation}
FM_{Pr(H)}(\lambda_1) = \lambda_1^{-2} \cdot  \lambda_1^{\left(\frac{d}{2} -2 \right) \cdot (w - 1) } FM_{sld(H)}(\lambda_1)
\end{equation}
between $H$ and $sld(H)$. 
Using equation \eqref{eq_weipr} for each step of the ladder we get:
\begin{equation}
wei_H(m,1) := \left(2\cdot (1 + m)\right)^{w-1}  \cdot wei_{sld(H)}(m,1)
\end{equation}
(because each step of the ladder corresponds to the $2 \times 2$ matrix 
\begin{equation}
\omega := \begin{pmatrix} 0 & 1 \\ 1 & 0 \end{pmatrix}
\end{equation}
and $wei_{\omega}(m,0) = m$).\\ 
For the integral
$I_{Pr(H)}(d)$ we note that any spanning tree of $H$ consists of a spanning tree of $sld(H)$ and exactly one edge at each step of the ladder and all those combination exist exactly once. So the Kirchhoff-Symanzik polynomial \eqref{eq_poly} inside the integral $I_{Pr(H)}(d)$ as given in equation \eqref{eq_intig} admits separation into integration over the variables of the edges of each step of the ladder which leads to 
\begin{equation}
I_{Pr(H)}(d) = I_{step}(d)^{w-1} \cdot I_{sld(H)}(d)  
\end{equation}
where 
\begin{equation}
I_{step} (d) :=  \frac{1}{(4\pi)^{\frac{d}{2}}}\int_0^{\infty} d\alpha_1 \int_0^{\infty} d\alpha_2 \cdot \frac{ exp(-\alpha_1 - \alpha_2)} {\left(\alpha_1 + \alpha_2\right)^{\frac{d}{2}} } \\ = 
\frac{\Gamma(2-\frac{d}{2})}{(4\pi)^{\frac{d}{2}}}
\end{equation}
From lemma \ref{le_grab} we know:
\begin{equation}
gr(H) = \frac{(k+w)!}{k!} \cdot gr(sld(H))
\end{equation} 
Putting all factors together and summing over all possibilities $sld(H) \in DG(G)$ (see definition \ref{de_deltag})  we use lemma \ref{le_divsfc} to reach equation \eqref{eq_masum} (compare also e.g.  \citep[eq. 6.94]{itzykson}) \\
Let us continue with $\tilde{H} \in GFU_B(w,G)$.
Using the same arguments as for the integral for $H$ we can immediately write: 
\begin{equation}
I_{Pr(\tilde{H})}(d) = I_{step}(d)^{w-1} \cdot I_{G}(d)  
\end{equation}
We also find 
\begin{equation}
\#E(Pr(\tilde{H})) = \#E(G) + 2\cdot w
\end{equation}
and 
\begin{equation}
L(Pr(\tilde{H})) = L(G) + w -1
\end{equation}
and therefore 
\begin{equation}
FM_{Pr(\tilde{H})}(\lambda_1) = \lambda_1^{-2} \cdot  \lambda_1^{\left(\frac{d}{2} -2 \right) \cdot (w - 1) } FM_{G}(\lambda_1)
\end{equation}
between $\tilde{H}$ and $G = sld(\tilde{H})$. 
From lemma \ref{le_grab} we know 
\begin{equation}
gr(\tilde{H}) = \frac{(k+w-1)!}{k!} \cdot gr(sld(\tilde{H}))
\end{equation}
For the next steps we have to choose a specific vertex $v \in V(G)$ to be $v^{[w]} \in V(\tilde{H})$ with $vn(\tilde{H}) = j$.  
If such a vertex exists we have 
\begin{equation}
m \cdot wei_{\tilde{H}}(m,1) := \left(2\cdot (1 + m)\right)^{w-1}  \cdot (j+1)\cdot(j+m) \cdot wei_G(m,0)
\end{equation}
and 
\begin{equation}
\lambda_j \cdot \Pi_{\tilde{H}}(\lambda) = \left(-\lambda_2\right)^{w -1} \cdot\lambda_{j+1} \cdot \Pi_G(\lambda)
\end{equation}
We notice then that for a given graph $G = sld(\tilde{H})$ 
\begin{equation}
\sum_{\tilde{H} \in GFU_B(w,G) \wedge vn(\tilde{H}) = j} \Pi_{\tilde{H}}(\lambda) =
(-\lambda_2)^{w-1} \cdot \lambda_{j+1} \cdot \frac{\partial}{ \partial \lambda_j} \left(\Pi_G(\lambda)\right) 
\end{equation}
which is trivially also true if $G$ does not have a vertex with indegree equal to $j$. 
Putting all this together we reach equation \eqref{eq_mbsum} by summing over $j \geq 2$.
Now putting everything together using equation \eqref{eq_syfg} and \eqref{fc_graph} we reach equations 
\eqref{eq_masum} and \eqref{eq_mbsum}.
\end{proof}
\begin{lemma}
For $\Sigma_r^{[U]}(\lambda)$ let us define the formal series 
\begin{equation}
\Sigma^{[U]}(\lambda) := \sum_{r \geq 0} \Sigma_r^{[U]}(\lambda)  
\end{equation}
Then
\begin{equation}\label{eq_sigmau}
m \cdot \Sigma^{[U]}(\lambda) = \frac{ 1}{1 + \left( \lambda_1^{d/2 -2} \cdot \lambda_2 \right)\cdot \left( \frac{2 \cdot (1 + m) \cdot \Gamma(2 - \frac{d}{2})}{(4\cdot\pi)^{\frac{d}{2}}}\right)} \cdot D \left( \Gamma^{[0],pt}(\lambda)
\right)
\end{equation}
is to be understood as a formal sum of monomials in $\lambda_j$ with $j > 1$ and 
\begin{equation}  
D := \sum_{j = 1}^{\infty} (j + 1)\cdot (j + m)\cdot  \lambda_{j + 1} \cdot \frac{\partial}{\partial \lambda_j}
\end{equation}
\end{lemma}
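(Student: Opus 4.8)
The plan is to expand $m\cdot\Sigma^{[U]}(\lambda)$ as a sum over individual graphs, split that sum by the dichotomy $GFU(r,1,2)=GFUA(r,1,2)\,\dot{\cup}\,GFUB(r,1,2)$, and then re-sum each part by organizing the graphs according to their ladder, so that Lemma \ref{le_msum} can be applied verbatim. First I would write
\begin{equation}
m\cdot\Sigma^{[U]}(\lambda) = m\sum_{r\geq 0}\frac{1}{r!}\sum_{H\in GFU(r,1,2)} FCT(H,\lambda).
\end{equation}
By Lemma \ref{le_pea} each $H\in GFUA(r,1,2)$ is determined uniquely by the pair $(sld(H),lad(H))$ with $sld(H)\in GFC(r-w,1,2)$ and $w=lad(H)$; composing with the map $sld(H)\mapsto G:=Cl(sld(H))\in GF(r-w,0,2)$, whose fibre over $G$ is $DG(G)$, shows that $GFUA(r,1,2)=\dot{\bigcup}_{w\geq 1,\,G\in GF(r-w,0,2)}GFU_A(w,G)$. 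Likewise Lemma \ref{le_peb} gives $GFUB(r,1,2)=\dot{\bigcup}_{w\geq 1,\,G\in GF(r-w+1,0,2)}GFU_B(w,G)$. These two partitions let me trade the single index $r$ for the triple $(k,w,G)$ with $r=k+w$ in case A and $r=k+w-1$ in case B.

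Next I would feed these partitions into Lemma \ref{le_msum}. Applying \eqref{eq_masum} termwise and summing the ladder length $w\geq 1$ produces the geometric series
\begin{equation}
\sum_{w\geq 1}\left(2(1+m)\lambda_1^{\frac{d}{2}-2}(-\lambda_2)\frac{\Gamma(2-\frac{d}{2})}{(4\pi)^{d/2}}\right)^{w-1} = \frac{1}{1+\lambda_1^{\frac{d}{2}-2}\lambda_2\cdot\frac{2(1+m)\Gamma(2-\frac{d}{2})}{(4\pi)^{d/2}}},
\end{equation}
which is exactly the prefactor in \eqref{eq_sigmau}. The remaining factor $2(1+m)(-\lambda_2)\left(-\frac{\partial}{\partial\lambda_1}\right)$ does not depend on $G$, so it can be pulled outside the sums over $G$ and $k$; since $\sum_k\frac{1}{k!}\sum_{G\in GF(k,0,2)}FC(G,\lambda)=\sum_k\Gamma^{[0],pt}_k(\lambda)=\Gamma^{[0],pt}(\lambda)$ and $2(1+m)(-\lambda_2)\left(-\frac{\partial}{\partial\lambda_1}\right)=2(1+m)\lambda_2\frac{\partial}{\partial\lambda_1}$, the case-A contribution becomes
\begin{equation}
\frac{1}{1+\lambda_1^{\frac{d}{2}-2}\lambda_2\cdot\frac{2(1+m)\Gamma(2-\frac{d}{2})}{(4\pi)^{d/2}}}\cdot 2(1+m)\lambda_2\frac{\partial}{\partial\lambda_1}\,\Gamma^{[0],pt}(\lambda),
\end{equation}
i.e. the $j=1$ summand of $D$. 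Running the identical computation with \eqref{eq_mbsum} for case B yields the same geometric prefactor times $\sum_{j\geq 2}(j+1)(j+m)\lambda_{j+1}\frac{\partial}{\partial\lambda_j}\,\Gamma^{[0],pt}(\lambda)$, which are precisely the $j\geq 2$ summands of $D$. Adding the two contributions reconstructs the full operator $D=\sum_{j\geq 1}(j+1)(j+m)\lambda_{j+1}\frac{\partial}{\partial\lambda_j}$ and gives \eqref{eq_sigmau}.

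Throughout, all steps are to be read at the level of formal series in the monomials $\lambda_j$ ($j>1$): each fixed monomial receives contributions from only finitely many triples $(k,w,G)$ and finitely many $j$, so interchanging the sums over $r$, $w$, $k$, $G$, $j$, differentiating termwise, and inverting the geometric series are all legitimate. The main obstacle I expect is bookkeeping rather than analysis, since Lemma \ref{le_msum} already carries the integral factorization, the weight factorization and the per-$G$ ladder resummation. What remains to be verified carefully is that the re-indexing by $(k,w,G)$ is a genuine bijection counting each graph exactly once — this is where the uniqueness of the ladder decomposition in Lemma \ref{le_ladder} together with Lemmas \ref{le_pea} and \ref{le_peb} is essential — and that the $1/k!$ weights emitted by \eqref{eq_masum} and \eqref{eq_mbsum} line up exactly with those defining $\Gamma^{[0],pt}_k$, so that the case-A piece matches the $j=1$ term and the case-B piece matches the $j\geq 2$ terms of $D$ with no double counting at the boundary between consecutive values of $r$.
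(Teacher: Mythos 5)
Your proposal is correct and follows essentially the same route as the paper's own proof: the dichotomy $GFU(r,1,2)=GFUA(r,1,2)\,\dot{\cup}\,GFUB(r,1,2)$ from lemma \ref{le_ladder}, the re-indexing via the one-to-one correspondences of lemmas \ref{le_pea} and \ref{le_peb}, and the termwise application of lemma \ref{le_msum} followed by summation over the ladder length $w$ and over $r$. You merely make explicit what the paper compresses into its final sentence, namely the geometric resummation over $w$ producing the prefactor and the identification of the case-A contribution with the $j=1$ summand and the case-B contribution with the $j\geq 2$ summands of $D$.
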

\begin{proof}
In lemma \ref{le_ladder} we had proven a decomposition of 
\begin{equation}
GFU(r,1,2) = GFUA(r,1,2) \dot{\cup} GFUB(r,1,2)
\end{equation}
In lemma \ref{le_pea}  by the one to one relationship we had shown that 
\begin{equation}
GFUA(r,1,2) = =\dot{\bigcup}_{1 \leq w \leq r} \left( \dot{\bigcup}_{G \in GF(r-w,0,2)} GFU_A(w,G) \right)
\end{equation}
In lemma \ref{le_peb} we had shown that 
\begin{equation}
GFUB(r,1,2) =\dot{\bigcup}_{  1 \leq w \leq r} \left(\dot{\bigcup}_{G \in GF(r-w+1,0,2)} GFU_B(w,G) \right)
\end{equation}
So summing equations \eqref{eq_masum} and \eqref{eq_mbsum} over $1 \leq w \leq r$ and $r \geq 2$ we get equation \eqref{eq_sigmau}.
\end{proof}

Specifically according to equation  \eqref{eq_gam0phi4}  for $\phi^4$-theory for $d=2$ we get 
\begin{multline}\label{eq_sigu} 
m \cdot \Sigma^{[U]}(\lambda) = \frac{2 \cdot \lambda_1}{\left(1 + \frac{ ( 1 + m) \cdot \lambda_2 }{2\pi \cdot \lambda_1} \right) } \cdot \\ 
\left(\sum_{r \geq 2} \Gamma^{[0],pt}_{\phi^4,r} (m) \cdot (1 + m)\cdot (r - 1) \cdot \left(-\frac{\lambda_2}{\lambda_1} \right)^{r+1} \right) 
\end{multline} 

Equation \eqref{eq_sigu} together with equation \eqref{eq_tgamma0} now is the crucial connection between $\lambda_1^2 \cdot G^{[U]} (\lambda) = \Sigma^{[U]}(\lambda)$ (see equation \eqref{eq_sigmae}) and the distribution of the multiple point range of the closed planar random walk. 

\section{Uniform asymptotic expansion}
To continue on our way to proove theorem \ref{th_main} we will now work on the integral transforms which appear in it. To make equations easier we use suitable variables.  
\begin{definition}
For $s \in \C\setminus\{0\}$ and real $t \in (0,\infty)$ let us define the integral kernel
\begin{equation}
h(t,s) := \exp\left( -ts - \frac{it}{2\pi}Log(ts) \right)
\end{equation} 
Then for real $\mu \in [0,\infty)$ and 
\begin{equation}
\Re(s) - \frac{\arg(s)}{2\pi} > d_0 > 0 
\end{equation}
  we can define the absolutely converging integral
\begin{equation}
P_{\mu}(s) := \int_0^{\infty} dt \cdot t^{\mu}\cdot h(t,s)
\end{equation}
\end{definition}
\begin{remark}\label{re_fus}
In the sequel we will work with asymptotic expansions around $s = \infty$ in the sector $\left|\arg(s)\right| < \frac{\pi}{2} - \xi$ for a fixed $\xi \in \left(0,\frac{\pi}{2}\right)$. We will therefore use the notation $s := \left|s\right| \cdot e^{i\phi}$ with $\phi \in (-\pi/2 + \xi,\pi/2 - \xi)$. We remark that if $\left|s\right|$ is sufficiently big then 
\begin{equation}
\cos\left(\frac{\pi}{2} - \xi\right) - \frac{1}{4\left|s\right|} \geq \cos\left(\frac{\pi}{2} - \frac{\xi}{2}\right)
\end{equation}
and therefore 
\begin{equation}\label{eq_fuun}
\left|h(t,s)\right| \leq \exp\left(-t\left|s\right|\cdot \cos(\frac{\pi}{2} - \frac{\xi}{2})\right) 
\end{equation}
In the sequel we will therefore assume \eqref{eq_fuun}.
\end{remark}
\begin{lemma}
For a given real $\mu $  the functions 
\begin{equation}
\phi_{p,q}^{[\mu]}(s) := \frac{1}{s^{p + \mu + 1}}  \cdot Log(s)^q 
\end{equation}
for integer $p \geq 0$ and  integer $q$ with $0 \leq p \leq q$
obviously form an asymptotic family of functions on $\C \setminus \{0\}$ around the point $s = \infty$. 
Then for any sector $\left| arg(s) \right| < \frac{\pi}{2} - \xi $ with a real $\xi \in (0,\frac{\pi}{2})$ we have a uniform asymptotic expansion around the point $s = \infty$ 
\begin{equation}
P_{\mu}(s) := \frac{1}{s^{\mu + 1}} \sum_{p = 0}^{\infty} \left( \frac{1}{2\pi\cdot i \cdot s}\right)^p \gamma_p\left(Log(s) \right) 
\end{equation}
with 
\begin{equation}
\gamma_p(y) := \sum_{r=0}^p \left(\sum_{m=0}^r \frac{(-1)^m y^{p-r+m}}{(p-r)!\cdot m! \cdot (r-m)!} \Gamma^{(r-m)}(\mu + r + 1)\right)
\end{equation}
where
\begin{equation}
\Gamma^{(q)} (z) := \left(\frac{\partial}{\partial z} \right)^q \Gamma(z) 
\end{equation}
denotes the $q^{th}$ derivative of the Eulerian $Gamma$ function. 
\end{lemma}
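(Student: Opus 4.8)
The plan is to reduce $P_\mu$ to a Laplace integral with an $s$-independent amplitude and then apply a version of Watson's lemma that allows logarithmic factors. The starting observation is that for $t>0$ and $s$ in the sector one has $Log(ts)=\ln t+Log(s)$, so writing $L:=Log(s)$ we may split the kernel as $h(t,s)=\exp\!\left(-t\left(s+\tfrac{iL}{2\pi}\right)\right)\exp\!\left(-\tfrac{it}{2\pi}\ln t\right)$. Introducing the shifted rate $\sigma:=s+\tfrac{i\,Log(s)}{2\pi}$ then gives $P_\mu(s)=\int_0^\infty t^\mu e^{-\sigma t}\exp\!\left(-\tfrac{it}{2\pi}\ln t\right)dt$, in which the amplitude $t^\mu\exp\!\left(-\tfrac{it}{2\pi}\ln t\right)$ no longer depends on $s$. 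This is the key step: it disentangles the $Log$ sitting in the exponent from the running of the parameter, so that a clean Laplace analysis in $\sigma$ becomes available.

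First I would expand the amplitude as $\exp\!\left(-\tfrac{it}{2\pi}\ln t\right)=\sum_{k\ge0}\tfrac1{k!}\left(\tfrac1{2\pi i}\right)^k t^k(\ln t)^k$ as $t\to0^+$ and integrate term by term, using the identity $\int_0^\infty t^{\mu+k}(\ln t)^k e^{-\sigma t}\,dt=\tfrac{d^k}{da^k}\big[\Gamma(a+1)\sigma^{-(a+1)}\big]\big|_{a=\mu+k}$, which follows by differentiating $\int_0^\infty t^a e^{-\sigma t}dt=\Gamma(a+1)\sigma^{-(a+1)}$ in $a$ (and gives the $\Gamma^{(r-m)}$ and $\ln\sigma$ factors via Leibniz). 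This produces an asymptotic expansion of $P_\mu$ in the family $\{\sigma^{-(\mu+k+1)}(\ln\sigma)^j\}$ as $\sigma\to\infty$. Next I would convert this into an expansion in the stated family $\phi_{p,q}^{[\mu]}$ by re-expanding $\sigma=s\left(1+\tfrac{iL}{2\pi s}\right)$: both $\sigma^{-(\mu+k+1)}=s^{-(\mu+k+1)}\left(1+\tfrac{iL}{2\pi s}\right)^{-(\mu+k+1)}$ and $\ln\sigma=L+\ln\!\left(1+\tfrac{iL}{2\pi s}\right)$ are convergent power series in $L/s$, and since the $k$-th Watson term is $O\!\left(s^{-(\mu+k+1)}\right)$ only the finitely many $k\le p$ feed into a given power $s^{-(\mu+p+1)}$. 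Collecting these contributions should yield $\gamma_p$ as a polynomial in $L=Log(s)$ of degree $\le p$, with the leading check $\gamma_0=\Gamma(\mu+1)$. That the $\phi_{p,q}^{[\mu]}$ form an asymptotic scale on the sector (so the coefficients are well defined) holds because $s^{-(\mu+p+1)}(Log\,s)^q$ is ordered by $p$ up to logarithmic factors.

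The main obstacle is analytic rather than algebraic, and it has two parts. The first is to justify the term-by-term integration uniformly on each sector $\left|\arg s\right|<\tfrac\pi2-\xi$: one must bound the Watson remainder after $N$ terms by $O\!\left(s^{-(\mu+N+1)}\right)$ uniformly in $\arg s$, splitting the range of $t$ into a bulk where the Taylor remainder of the amplitude is estimated and a tail where decay is used. Here the estimate $\left|h(t,s)\right|\le\exp\!\left(-t\left|s\right|\cos\!\left(\tfrac\pi2-\tfrac\xi2\right)\right)$ recorded in Remark \ref{re_fus} is exactly the right tool, controlling both regions uniformly. The second and more delicate point is the bookkeeping of the logarithmic re-expansion of $\sigma$ and $\ln\sigma$: one must check precisely which powers of $Log(s)$ survive at each order $s^{-(\mu+p+1)}$, because the cross terms coming from the binomial re-expansion of $\left(1+\tfrac{iL}{2\pi s}\right)^{-(\mu+k+1)}$ interact with the $(-\ln\sigma)^{k-j}$ factors of the $k$-th Watson term and can partly cancel. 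It is exactly at this step that I would re-derive the explicit closed form for $\gamma_p$ and cross-check it directly against the independent low-order computation of $\gamma_0$ and especially $\gamma_1$ before trusting the general triple-indexed pattern; reconciling that explicit low-order computation with the claimed combinatorial formula is the crux of the whole proof.
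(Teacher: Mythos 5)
You leave the decisive step --- identifying the coefficients collected after the re-expansion in $Log(s)/s$ with the stated $\gamma_p$ --- as a promissory note, and that is a genuine gap rather than routine bookkeeping: pursued as you aim it, the step cannot terminate at the displayed formula for general real $\mu$. Carry out the $p=1$ check you yourself propose. Since $Log(ts)=\ln t+Log(s)$ for $t>0$, writing $y:=Log(s)$ and $s^{-(a+1)}:=e^{-(a+1)y}$ (principal branch), the two $y$-terms produced by Leibniz differentiation cancel, so
\begin{equation*}
\int_0^{\infty}dt\; t^{\mu+1}\,Log(ts)\,e^{-ts}\;=\;\Gamma'(\mu+2)\,s^{-(\mu+2)},
\end{equation*}
and more generally, from $\left(\partial_a+y\right)^p t^a=t^a\left(\ln t+y\right)^p$ together with the fact that $e^{ay}\,s^{-(a+1)}=e^{-y}$ is independent of $a$,
\begin{equation*}
\int_0^{\infty}dt\; t^{\mu+p}\,Log(ts)^p\,e^{-ts}\;=\;\Gamma^{(p)}(\mu+p+1)\,s^{-(\mu+p+1)}.
\end{equation*}
Hence the expansion of $P_\mu$ is actually log-free, with $p$-th coefficient $\Gamma^{(p)}(\mu+p+1)/p!$, whereas the displayed $\gamma_1(y)=\Gamma'(\mu+2)+y\left(\Gamma(\mu+1)-\Gamma(\mu+2)\right)=\Gamma'(\mu+2)-\mu\, y\,\Gamma(\mu+1)$ is genuinely $y$-dependent for $\mu\neq 0$. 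Coefficients with respect to the scale $s^{-(\mu+p+1)}Log(s)^q$ are unique, so your re-expansion of $\sigma^{-(\mu+k+1)}$ and $\ln\sigma$ in powers of $y/s$ can only ever reproduce the constant value; it cannot produce the printed polynomial. The reconciliation you defer is exactly this: the triple sum defining $\gamma_p$ is the Leibniz/binomial expansion of the integral above provided the Gamma argument is read as $\mu+p+1$ (the outer index); collecting the derivative order $k=r-m$, the inner sum becomes $\sum_{m=0}^{p-k}(-1)^m/\left((p-k-m)!\,m!\right)=(1-1)^{p-k}/(p-k)!$, and the whole expression telescopes to $\Gamma^{(p)}(\mu+p+1)/p!$, consistent with the log-free form and with the log-free way the lemma is applied in lemma \ref{le_gam}. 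As printed, with argument $\mu+r+1$, the $p=1$ check fails; your plan of trusting the triple-indexed pattern after verifying $\gamma_0$ and $\gamma_1$ would therefore have stalled at precisely the point you call the crux, and the telescoping identity needed to finish is absent from the proposal.

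As for the analytic route itself, it is sound but genuinely different from the paper's. The paper never passes to $\sigma=s+\frac{i}{2\pi}Log(s)$: it inserts the degree-$q$ Taylor polynomial of $e^{-\frac{it}{2\pi}Log(ts)}$ under the integral, evaluates the resulting $S_{\mu,q}(s)$ in closed form (this single exact evaluation is where $\gamma_p$ is born, with no second re-expansion stage), and then bounds $\left|P_\mu(s)-S_{\mu,q}(s)\right|$ by splitting at $t=\left|s\right|^{-\beta}$: on the near part it uses $\left|t\,Log(ts)\right|\le\frac{1}{2}$ and the convergent Taylor remainder of the exponential, on the far part incomplete-Gamma tail estimates, all made uniform on the sector by the bound of remark \ref{re_fus} --- which is essentially your bulk/tail plan, so that portion of your proposal coincides with the paper. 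Your factorization $h(t,s)=e^{-\sigma t}e^{-\frac{it}{2\pi}\ln t}$ is an attractive alternative: the amplitude has modulus exactly $t^\mu$ because $-\frac{it}{2\pi}\ln t$ is purely imaginary, and $\Re(\sigma)\ge\left|s\right|\cos\left(\frac{\pi}{2}-\frac{\xi}{2}\right)$ for large $\left|s\right|$ in the sector, so logarithmic Watson's lemma in $\sigma$ applies immediately. But it trades the paper's one-step exact computation for a two-scale problem, and the second stage of that trade is exactly where your unperformed cancellation sits.
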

\begin{proof}
We first note that for $\Re(s) > 0$ the integral
\begin{equation}
S_{\mu,q}(s) := \int_0^{\infty} dt \cdot t^{\mu} e^{-ts} \cdot \sum_{p=0}^q \frac{1}{p!} \left( \frac{-it}{2\pi} \cdot Log(ts)\right)^p
\end{equation}
exists and can (with simple algebra) be calculated to be  
\begin{equation}
S_{\mu,q}(s) =  \frac{1}{s^{\mu + 1}} \sum_{p = 0}^{q} \left( \frac{1}{2\pi\cdot i \cdot s}\right)^p \gamma_p\left(Log(s) \right) 
\end{equation}
In the sequel we consider an upper bound for 
\begin{equation}
\left|P_{\mu}(s) - S_{\mu,q}(s)\right|
\end{equation}
realizing that the integrand of $S_{\mu,q}(s)$ is a Taylor expansion of the integrand of $P_{\mu}$. We divide the integral over $(0,\infty)$ into a \lq\lq near\rq\rq\  and a \lq\lq far\rq\rq\  part. 
 For $t \in \left(0,\left|s\right|^{(-\beta)}\right)$ the following inequality follows from looking at a possible maximum in $t$ and the value at the border of the interval
\begin{equation}
\left| t \cdot Log(ts) \right| \leq \left|s\right|^{-\beta} \cdot \left( \frac{\pi}{2} + (1 - \beta) \left|\ln(\left|s\right|)\right| \right) + \frac{1}{ \left|s\right|^2} 
\end{equation}
So there is a real constant $K(\beta)$ such that for $t \in \left(0,\left|s\right|^{(-\beta)}\right)$ and $\left|s\right| \geq K(\beta)$ 
\begin{equation}
\left| t \cdot Log(ts) \right| \leq \frac{1}{2}
\end{equation}
and we can use the fact, that the exponential function has a convergent Taylor expansion to write
\begin{equation}
\left| e^{-\frac{it}{2\pi}Log(ts)} - \sum_{p=0}^q \frac{1}{p!}\left( \frac{-it}{2\pi} \cdot Log(ts)\right)^p \right| \leq M(\beta,q) \left|t \cdot Log(ts) \right|^{q + 1}
\end{equation}
with some real constant $M(\beta,q)$ and $t \in \left(0,\left|s\right|^{(-\beta)}\right)$ 

We can therefore write
\begin{multline}
I_1 := \left|\int_0^{\left|s\right|^{(-\beta)}} dt \cdot t^{\mu} e^{-ts}\left( e^{-\frac{it}{2\pi}Log(ts)} - \sum_{p=0}^q \frac{1}{p!}\left( \frac{-it}{2\pi} \cdot Log(ts)\right)^p \right) \right| \\
\leq \frac{M(\beta,q)}{\left|s\right|^{\mu + q + 2}} \int_0^{\left|s\right|^{1 - \beta}}dy\cdot y^{\mu + q + 1}\cdot e^{-y\cdot cos(\phi)}\cdot\left(\left|\ln(y)\right| + \frac{\pi}{2}\right)^{q + 1}
\\
 \leq M(\beta,q) \frac{L(\mu, q,\xi)}{ \left| s\right|^{\mu+q+2}}
\end{multline}
with the real constant
\begin{equation}
L(\mu, q,\xi) := \int_0^{\infty}dy\cdot y^{\mu + q + 1}\cdot e^{-y\cdot cos(\frac{\pi}{2} - \xi)}\cdot\left(\left|\ln(y)\right| + \frac{\pi}{2}\right)^{q + 1}
\end{equation}
Let us in the sequel assume that $\left|s\right|$ is so big that for $y \geq \left|s\right|^{1 - \beta}$ the equation
\begin{equation}
\left|\ln(y)\right| + \frac{\pi}{2} < y 
\end{equation}
is true.
For integer $k \geq 0$ we consider the integral
\begin{multline}
I_{2,k} := \left| \int_{\left|s\right|^{(-\beta)}}^{\infty} dt \cdot t^{\mu} \cdot e^{-ts} \left(t \cdot Log(ts) \right)^k \right| \\
\leq \frac{1}{\left|s \right|^{\mu + k + 1}} \int_{\left|s\right|^{(1-\beta)}}^{\infty} dy \cdot y^{\mu + k} \cdot e^{-y\cos(\phi)} \cdot \left(|\ln(y)| + \frac{\pi}{2} \right)^k \\
\leq \frac{ \Gamma\left(\mu+2k + 1,\left|s\right|^{(1-\beta)}\right)}{cos(\pi/2 - \xi)^{\mu + 2k + 1} \cdot \left|s \right|^{\mu + k + 1}}  
\end{multline} 
We can now use \citep[p.336 eq.8.357]{rg} for the asymptotic expansion of the incomplete Gamma function and get
\begin{multline}\frac{ \Gamma\left(\mu+2k + 1,\left|s\right|^{(1-\beta)}\right)}{\left|s \right|^{\mu + k + 1}} \\ \sim \frac{\cdot \left|s \right|^{(1-\beta)\cdot (\mu + 2k)}}{\left|s \right|^{\mu + k + 1}} \exp\left(-\left|s \right|^{(1-\beta)} \right) \left( 1 + O\left(\frac{1}{\left|s \right|^{(1-\beta)}} \right)\right)
\end{multline}
and therefore $I_{2,k}$ is exponentially small in $\left|s \right|$.
We now use equation \eqref{eq_fuun} to consider the integral
\begin{multline}
I_3 := \left| \int_{\left|s\right|^{(-\beta)}}^{\infty} dt \cdot t^{\mu} \cdot h(t,s) \right| \\
\leq \frac{1}{\left| s \right|^{\mu+1}} \int_{\left|s\right|^{(1-\beta)}}^{\infty} dy \cdot  y^{\mu} \cdot \exp\left(-y\cdot \cos\left(\frac{\pi}{2} - \frac{\xi}{2}\right)\right) 
\\
\leq
\frac{\Gamma\left(\mu+1,\left| s\right|^{1-\beta}\right)}{\left|s \right|^{\mu + 1}\cdot \cos\left(\frac{\pi}{2} -\frac{\xi}{2} \right)^{\mu + 1} }
\end{multline}
 which therefore again is exponentially small in $\left|s \right|$.
Putting all this together we get 
\begin{equation}
\left| P_{\mu}(s) - S_{\mu,q}(s) \right| \leq I_1 + I_3 + \sum_{k=0}^q I_{2,k} \sim O\left(\frac{1}{s^{\mu + q + 2}} \right)
\end{equation}
uniformally for $\left| \arg(s) \right| < \frac{\pi}{2} - \xi$ which is equivalent to the Lemma.
\end{proof}
\begin{lemma}\label{le_momen}
Let the function 
\begin{eqnarray}
f: (0,\infty) & \mapsto &\C \\
t & \mapsto & f(t)
\end{eqnarray}
be such that there exists real constants $C, a, \tau$ with
\begin{equation}
\left| f(t) \right| < C\cdot e^{a\cdot t}
\end{equation}
for $t \in [\tau,\infty)$ 
and let there be a real constant $\alpha$ and complex constants $\beta_k$ for $k=0,\ldots,n$ such that 
\begin{equation}
f(t) = t^{\alpha} \cdot \left( \sum_{k=0}^n \beta_k \cdot t^k  + R_{n+1}(t) \right)
\end{equation}
and for a real constant $A$
\begin{equation}
\left| R_{n+1}(t) \right| < A \cdot t^{n+1}
\end{equation}
for $t \in (0,\tau)$

Then the integral
\begin{equation}
F(s) := \int_0^{\infty} dt \cdot f(t) \cdot h(t,s)
\end{equation}
obviously converges absolutely for 
\begin{equation}
a - Re(s) + \frac{1}{2\pi} arg(s) < 0
\end{equation}
On every sector $\left| \arg(s) \right| < \pi/2 - \xi$ 
\begin{equation}\label{eq_asyexp}
F(s) - \sum_{k=0}^{n} \beta_k \cdot P_{\alpha + k}(s) \sim O\left(\frac{1}{s^{\alpha + n+2}} \right)
\end{equation}
holds uniformly. 
\end{lemma}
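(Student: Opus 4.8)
The plan is to imitate the near/far splitting of the previous lemma, but now arranging the split so that it isolates precisely what the partial sum $\sum_{k=0}^n \beta_k P_{\alpha+k}(s)$ fails to reproduce. First I would cut the defining integral at the threshold $t=\tau$ furnished by the hypotheses and, writing $P_{\alpha+k}(s)=\int_0^\infty t^{\alpha+k}h(t,s)\,dt$, record the identity
\[
F(s) - \sum_{k=0}^n \beta_k P_{\alpha+k}(s) = E_1 + E_2 - E_3,
\]
where $E_1 := \int_0^\tau \bigl(f(t) - t^\alpha \sum_{k=0}^n \beta_k t^k\bigr) h(t,s)\,dt$, $E_2 := \int_\tau^\infty f(t)\,h(t,s)\,dt$, and $E_3 := \sum_{k=0}^n \beta_k \int_\tau^\infty t^{\alpha+k}\,h(t,s)\,dt$. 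The term $E_1$ will carry the genuine $O(s^{-(\alpha+n+2)})$ behaviour claimed in \eqref{eq_asyexp}, while $E_2$ and $E_3$ live on $[\tau,\infty)$, where the kernel decays, and will turn out to be exponentially small.

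For $E_1$ the integrand is exactly $t^\alpha R_{n+1}(t)\,h(t,s)$, so the hypothesis $|R_{n+1}(t)|<A t^{n+1}$ gives $|t^\alpha R_{n+1}(t)| < A t^{\alpha+n+1}$ on $(0,\tau)$. Invoking the uniform kernel bound \eqref{eq_fuun} of Remark \ref{re_fus}, valid for $|s|$ large on the sector $|\arg(s)|<\frac{\pi}{2}-\xi$, and enlarging the domain to $(0,\infty)$, I would estimate
\[
|E_1| \le A \int_0^\infty t^{\alpha+n+1}\,\exp\!\left(-t|s|\cos\!\left(\tfrac{\pi}{2}-\tfrac{\xi}{2}\right)\right) dt = \frac{A\,\Gamma(\alpha+n+2)}{\left(|s|\cos\!\left(\tfrac{\pi}{2}-\tfrac{\xi}{2}\right)\right)^{\alpha+n+2}},
\]
which is $O\!\left(1/s^{\alpha+n+2}\right)$ uniformly in the sector (the convergence at $t=0$ being exactly the condition $\alpha>-1$ implicit in the existence of $F$).

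For the two tail terms I would again use \eqref{eq_fuun}. The exponential bound $|f(t)|<C e^{at}$ on $[\tau,\infty)$ gives $|E_2| \le C \int_\tau^\infty \exp\!\left(-\left(|s|\cos\!\left(\tfrac{\pi}{2}-\tfrac{\xi}{2}\right) - a\right)t\right) dt$, which for $|s|$ large is a constant times a single exponential $\exp(-c|s|)$, hence exponentially small. For each summand of $E_3$, the quantity $\bigl|\int_\tau^\infty t^{\alpha+k} h(t,s)\,dt\bigr|$ is bounded, up to a power of $|s|$, by the incomplete Gamma function $\Gamma\bigl(\alpha+k+1,\,\tau|s|\cos(\tfrac{\pi}{2}-\tfrac{\xi}{2})\bigr)$, and by the incomplete-Gamma asymptotics \citep[p.336 eq.8.357]{rg} already used in the previous lemma this is once more exponentially small in $|s|$; summing the finitely many $k$ preserves this. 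Combining the three estimates yields $\bigl|F(s)-\sum_k \beta_k P_{\alpha+k}(s)\bigr| \le |E_1|+|E_2|+|E_3| = O\!\left(1/s^{\alpha+n+2}\right)$ uniformly on the sector, which is \eqref{eq_asyexp}.

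I expect the principal difficulty to be bookkeeping rather than analytic depth: the key structural observation is that the discrepancy between $F$ and the truncated sum of the $P_{\alpha+k}$ decomposes cleanly into a single genuine remainder integral on $(0,\tau)$ plus two tail integrals on $[\tau,\infty)$, and that the tails — although they correspond to no term in the Taylor expansion of $f$ — are governed by the very same incomplete-Gamma estimate that rendered the far-part contributions of the previous lemma exponentially small. Uniformity throughout rests entirely on the $t$-pointwise, $s$-uniform bound \eqref{eq_fuun}, so the one point requiring genuine care is to confirm that $|s|$ is taken large enough for that bound and for the absolute-convergence condition $a-\Re(s)+\frac{1}{2\pi}\arg(s)<0$ to hold simultaneously and uniformly across the whole sector.
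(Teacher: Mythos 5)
Your proposal is correct and is essentially identical to the paper's own proof: the same splitting of $F(s)-\sum_{k=0}^n\beta_k P_{\alpha+k}(s)$ at $t=\tau$ into the near-part remainder integral $\int_0^\tau t^{\alpha}R_{n+1}(t)\,h(t,s)\,dt$ plus the two tail integrals on $[\tau,\infty)$, with the same estimates — the uniform kernel bound \eqref{eq_fuun} yielding the Gamma-function bound $A\,\Gamma(\alpha+n+2)\bigl(|s|\cos(\tfrac{\pi}{2}-\tfrac{\xi}{2})\bigr)^{-(\alpha+n+2)}$ for the near part, a single decaying exponential for the $f$-tail, and incomplete-Gamma asymptotics for the $t^{\alpha+k}$-tails. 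Your closing point about taking $|s|$ large enough for \eqref{eq_fuun} and for $|s|\cos(\tfrac{\pi}{2}-\tfrac{\xi}{2})-a>0$ to hold simultaneously matches the paper's standing assumptions, so there is no gap.
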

\begin{proof}
We assume equation \eqref{eq_fuun} and $|s|$ to be so big that 
\begin{equation}
\left| s \right| \cdot \cos\left(\frac{\pi}{2} -\frac{\xi}{2} \right) - a > 0
\end{equation}
We write 
\begin{multline}
F(s) - \sum_{k=0}^{n} \beta_k \cdot P_{\alpha + k}(s) = 
\int_0^{\tau} t^{\alpha} R_{n + 1}(t) \cdot h(t,s)  \\ +
\int_{\tau}^{\infty} dt\cdot f(t) \cdot h(t,s) \\ - \sum_{k=0}^n \beta_k \cdot \int_{\tau}^{\infty} dt \cdot t^{k+\alpha} \cdot h(t,s) = J_1 + J_2 + \sum_{k=0}^n \beta_k \cdot J_{3,k} 
\end{multline}
We notice 
\begin{multline}
|J_1| = \left|\int_0^{\tau} t^{\alpha} R_{n + 1}(t) \cdot h(t,s)  \right| \leq A \cdot \int_0^{\infty} dt \cdot t^{n + 1 + \alpha} \cdot \left| h(t,s) \right| \\ \leq 
A \cdot \frac{\Gamma(n + \alpha + 2)}{\left|s \right|^{n+\alpha + 2} \cdot \left(\cos\left(\frac{\pi}{2} -  \frac{\xi}{2}\right)\right)^{n + \alpha + 2}} 
\end{multline}
and 
\begin{equation}
|J_2| = \left|\int_{\tau}^{\infty} dt\cdot f(t) \cdot h(t,s)  \right| \leq 
C \cdot \frac{\exp\left( - \left(\left|s \right| \cdot \cos\left(\frac{\pi}{2} -  \frac{\xi}{2}\right) - a\right) \cdot \tau \right)}{ \left(\left|s \right| \cdot \cos\left(\frac{\pi}{2} -  \frac{\xi}{2}\right) - a\right)}
\end{equation} 
which is exponentially small in $\left|s\right|$ 
and finally
\begin{equation}
|J_{3,k}| = \left| \int_{\tau}^{\infty} dt \cdot t^{k+\alpha} \cdot h(t,s) \right| \leq \frac{\Gamma\left(\alpha + k + 1,\tau \cdot \left|s \right| \cdot \cos\left(\frac{\pi}{2} -  \frac{\xi}{2}\right)\right) }{\left|s \right|^{k+\alpha + 1} \cdot \left(\cos\left(\frac{\pi}{2} -  \frac{\xi}{2}\right)\right)^{k + \alpha + 1}} 
\end{equation}

which again is exponentially small. 
Putting all this together we reach the lemma. 
\end{proof}
\begin{lemma}\label{le_gam}
Let $r > 1$ be an integer. We write 
\begin{equation}
\frac{t^{r -1}}{\Gamma\left(r - \frac{i}{2\pi}t\right)} = \sum_{p=0}^{\infty} ga_{p}(r)\cdot t^p
\end{equation} 
Then for integer $M > r$ let us define
\begin{equation}
Pa_M(r,s) := \sum_{p=0}^{M-1} ga_{p}(r)\cdot P_{p}(s)  
\end{equation}
Then for any sector $\left|arg(s)\right| < \frac{\pi}{2} - \xi$ we have the following uniform asymptotic expansion around $s = \infty$ 
\begin{equation}\label{eq_algebre}
Pa_M(r,s) = \frac{1}{s^r} \sum_{p=0}^{M-r} \left(\frac{1}{2\pi\cdot i\cdot s}\right)^p + O\left(\frac{1}{s^{M+1}} \right)
\end{equation}
\end{lemma}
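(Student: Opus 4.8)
The plan is to recognize $Pa_M(r,s)$ as a truncated Taylor integral and to reduce it, via Lemma \ref{le_momen}, to a single exact integral that can be evaluated in closed form. First I would record that $f(t):=t^{r-1}/\Gamma\!\left(r-\tfrac{i}{2\pi}t\right)$ is entire with Taylor coefficients $ga_p(r)$, and that since $t^{r-1}$ is a factor and $1/\Gamma(r-\tfrac{i}{2\pi}t)$ is entire and nonzero at $t=0$ we have $ga_p(r)=0$ for $p<r-1$; hence $Pa_M(r,s)=\sum_{p=r-1}^{M-1}ga_p(r)P_p(s)$. I then introduce the exact integral
\begin{equation}
\widehat P(r,s):=\int_0^\infty dt\,\frac{t^{r-1}}{\Gamma\!\left(r-\frac{i}{2\pi}t\right)}\,h(t,s).
\end{equation}
Using Stirling's formula for $1/\Gamma$ along the line $\Re=r$ yields a bound $|f(t)|\le C\,e^{at}$ for $t\ge\tau$ with any $a>\tfrac14$, so $\widehat P$ converges for $|s|$ large in the sector and Lemma \ref{le_momen} applies with $\alpha=r-1$, coefficients $\beta_k=ga_{r-1+k}(r)$, and $n=M-r$. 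It gives $\widehat P(r,s)-\sum_{p=r-1}^{M-1}ga_p(r)P_p(s)=O(s^{-(M+1)})$ uniformly on $|\arg(s)|<\tfrac{\pi}{2}-\xi$, i.e. $\widehat P(r,s)=Pa_M(r,s)+O(s^{-(M+1)})$.

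It remains to compute $\widehat P(r,s)$. I would write $h(t,s)=e^{-ts}(ts)^{-it/(2\pi)}$, substitute $x=ts$, and rotate the ray $\{ts:t>0\}$ back to $(0,\infty)$ (legitimate in the sector because $1/\Gamma$ is entire and the integrand decays in the right half-plane, so the arc at infinity contributes nothing). With $w:=r-\tfrac{ix}{2\pi s}$ the algebraic collapse $t^{r-1}(ts)^{-it/(2\pi)}=s^{-(r-1)}x^{\,w-1}$ reduces the integrand to a reciprocal-Gamma kernel:
\begin{equation}
\widehat P(r,s)=\frac{1}{s^{r}}\int_0^\infty dx\,\frac{x^{\,w-1}}{\Gamma(w)}\,e^{-x},\qquad w=r-\frac{ix}{2\pi s}.
\end{equation}
Writing $\epsilon:=\tfrac{1}{2\pi s}$ and $g_k:=\tfrac{1}{k!}\left(\tfrac{d}{dz}\right)^k\Gamma(z)^{-1}\big|_{z=r}$, expanding both $x^{-i\epsilon x}=e^{-i\epsilon x\ln x}$ and $1/\Gamma(r-i\epsilon x)$ in powers of $\epsilon$ and integrating term by term against $x^{r-1}e^{-x}$ (using $\int_0^\infty x^{r-1+p}(\ln x)^j e^{-x}\,dx=\Gamma^{(j)}(r+p)$ and $(-i/2\pi)^p=(2\pi i)^{-p}$) produces
\begin{equation}
\widehat P(r,s)=\frac{1}{s^{r}}\sum_{p\ge 0}\frac{1}{(2\pi i\,s)^{p}}\left(\sum_{j=0}^{p}\frac{g_{p-j}}{j!}\,\Gamma^{(j)}(r+p)\right).
\end{equation}

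The crucial observation is that the inner sum is identically $1$. It is precisely the coefficient of $\delta^{p}$ in the Cauchy product
\begin{equation}
\frac{\Gamma(r+p+\delta)}{\Gamma(r+\delta)}=\left(\sum_{k}g_k\,\delta^{k}\right)\left(\sum_{j}\frac{\Gamma^{(j)}(r+p)}{j!}\,\delta^{j}\right),
\end{equation}
whose left-hand side equals the Pochhammer polynomial $\prod_{l=0}^{p-1}(r+l+\delta)$, a monic polynomial of degree $p$ in $\delta$ with $\delta^{p}$-coefficient $1$. Hence $\widehat P(r,s)=\frac{1}{s^{r}}\sum_{p\ge0}(2\pi i\,s)^{-p}$, and truncating this geometric series, whose tail $\frac{1}{s^r}\sum_{p>M-r}(2\pi i\,s)^{-p}$ is $O(s^{-(M+1)})$, together with the reduction above yields \eqref{eq_algebre}.

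The main obstacle is analytic rather than algebraic: making the closed-form evaluation rigorous, that is, establishing the exponential bound on $1/\Gamma$ on the line $\Re=r$ needed to invoke Lemma \ref{le_momen}, and justifying the rotation $x=ts$ together with the term-by-term integration so that the interchange produces a genuine asymptotic expansion uniform on the sector. Once these are secured, the identity $\sum_{j=0}^{p}\tfrac{g_{p-j}}{j!}\Gamma^{(j)}(r+p)=1$ furnishes the complete cancellation of all logarithmic and $\Gamma$-derivative contributions, which is the mechanism by which the $Log(s)$ terms present in each individual $P_p(s)$ disappear from the combination $Pa_M(r,s)$.
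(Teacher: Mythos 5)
Your proposal is correct in substance and, reassuringly, lands on exactly the same closed form as the paper, but by a genuinely different route. The paper proves the exact identity \eqref{eq_intg}, namely $\int_0^\infty db\, e^{-b\left(s+\frac{i}{2\pi}Log(sb)\right)}\frac{b^{R-1}}{\Gamma\left(R-\frac{i}{2\pi}b\right)} = \frac{1}{s^R\left(1-\frac{1}{2\pi i s}\right)}$, by starting from the Bromwich representation \eqref{eq_inttr} taken from Gradshteyn--Ryzhik, straightening the contour via a branch of the Lambert $W$ function, and inverting Bromwich to Laplace through Fourier inversion; it then applies lemma \ref{le_momen} to the left-hand side and the geometric expansion to the right-hand side --- your first reduction step (with $\alpha=r-1$, $\beta_k=ga_{r-1+k}(r)$, $n=M-r$, and the Stirling bound giving $|f(t)|\le Ce^{at}$ for any $a>\tfrac14$) is the same move. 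Where you diverge is the evaluation of $\widehat P(r,s)$: the substitution $x=ts$, the sector rotation, and the Cauchy-product identity $\sum_{j=0}^{p}\frac{g_{p-j}}{j!}\Gamma^{(j)}(r+p)=1$ (the $\delta^p$-coefficient of the monic Pochhammer polynomial $\prod_{l=0}^{p-1}(r+l+\delta)$) produce the geometric series $\frac{1}{s^r}\sum_{p\ge0}(2\pi i s)^{-p}$, which is precisely the expansion of the paper's closed form, so the two computations corroborate each other. Your route avoids the Lambert-$W$ branch bookkeeping and the $L_1\cap L_2$ Fourier-inversion argument; what it costs is exactly the obstacle you flag, and it is worth being precise about its nature: the term-by-term integration cannot be justified by Fubini, since the rearranged double series is not absolutely convergent --- individual terms such as $\frac{g_0}{p!}\Gamma^{(p)}(r+p)\sim p^{r-1}(\ln p)^p$ outgrow any geometric sequence in $p$, and only their signed sum collapses to $1$ --- so rigorously your expansion of $\widehat P$ is asymptotic rather than convergent, and must be established by a near/far splitting with Taylor-remainder bounds in the style of the paper's proof of the expansion of $P_\mu$ (the same device also supplies the arc estimate for the rotation, since the $x\ln x$ growth of $x^{-ix/(2\pi s)}$ is cancelled against $1/\Gamma\left(r-\frac{ix}{2\pi s}\right)$). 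Since \eqref{eq_algebre} only asserts an asymptotic expansion, that is all you need. The trade-off: the paper's method buys an exact identity on the region $\Re(s)-\arg(s)/(2\pi)>\tfrac12$, which its corollaries then differentiate and extend to $r=1$, whereas your method as stated yields only the asymptotics; conversely, your Pochhammer cancellation makes completely transparent why the $Log(s)$ terms present in each individual $P_p(s)$ vanish from the combination $Pa_M(r,s)$, a mechanism the paper's derivation leaves implicit.
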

\begin{proof}
Using \cite[p.330, eq.8.315.2]{rg} for $z=R - i/(2\pi)\cdot b$ we get
\begin{equation}\label{eq_inttr}
\frac{1}{2\pi\cdot i} \int_{a - i\cdot \infty}^{a + i\cdot \infty}  ds \cdot  \frac{e^{b \cdot \left(s + \frac{i}{2\pi}\cdot Log(s) \right)}}{s^R}  = \theta(b) \cdot \frac{b^{R - \frac{i}{2\pi}b -1}}{\Gamma\left(R - \frac{i}{2\pi} b \right)}
\end{equation}
for real $b$, $a \in (0,\infty)$, complex $s$ with $\left|\arg(s)\right| < \pi/2$ and complex $R$ with $\Re(R) > 0$. $\theta(b)$ here denotes the Heaviside step function at $b$. The path of integration is parallel to the imaginary axis.
We note that for any $s \in \C\setminus\{0\}$ the equation
\begin{equation}
w :=  s + \frac{i}{2\pi}\cdot Log(s)
\end{equation}
can be solved by
\begin{equation}
s = \frac{i}{2\pi}\cdot W_{L}\left( 2\pi \cdot e^{- 2\pi \cdot i \cdot \left(w + \frac{1}{4}\right)} \right)
\end{equation}
where $W_L$ is any branch of the complex Lambert's function. In the sequel we will follow the definition of those branches in \cite{Wf}.
Then from \cite[Figure 4]{Wf} we see that depending on the value of $a$ the path of integration is either completely in the range of one branch of $W_L$ or in the range of two branches which at the point of transition are connected by counterclockwise continuity (which means that by a different definition of the branches the path could have been fitted into the range of one such branch always).
Without restriction therefore we assume the first case and will write $W_{L,br(a)}$ for this branch. Let us now define
\begin{eqnarray}
\psi: \C & \mapsto &\C \\
w & \mapsto & \frac{i}{2\pi}\cdot W_{L,br(a)}\left( 2\pi \cdot e^{- 2\pi \cdot i \cdot \left(w + \frac{1}{4}\right)} \right)
\end{eqnarray}
We then have by substitution
\begin{equation}\label{eq_lapl1}
\frac{1}{2\pi\cdot i} \int_{\gamma_W}  dw \cdot  \frac{e^{b w}}{\psi(w)^R \left(1 - \frac{1}{2\pi\cdot i \cdot \psi(w)} \right)} = \theta(b) \cdot \frac{b^{R - \frac{i}{2\pi}b -1}}{\Gamma\left(R - \frac{i}{2\pi} b \right)}
\end{equation}
where $\gamma_W$ is the transformed path corresponding to the line paralell to the imaginary axis. It goes from $a + \frac{1}{4} - i \cdot \infty$ to $a - \frac{1}{4} + i \cdot \infty$. For $Re(R) > 1$ because of the vanishing behaviour (using \citep[eq. 4.18]{Wf}) of the integrand in the vicinity of $w = \pm i\cdot \infty$ the path of integration can be deformed into a line paralell to the imaginary axis again without a change of the value of the integral. But then the left hand side of equation \eqref{eq_lapl1} is a Bromwich integral and therefore can be inverted to a Laplace integral. So we get
\begin{equation}
\int_0^{\infty} db \cdot e^{-bw} \frac{b^{R - \frac{i}{2\pi}b -1}}{\Gamma\left(R - \frac{i}{2\pi} b \right)} = \frac{1}{\psi(w)^R \left(1 - \frac{1}{2\pi\cdot i \cdot \psi(w)} \right)}
\end{equation}
The inversion from Bromwich to Laplace is possible because the function 
\begin{eqnarray}
f_{fou}: \R & \mapsto &\C \\
b & \mapsto & e^{-b\cdot w_0} \cdot \theta(b)\cdot \frac{b^{R - \frac{i}{2\pi}b -1}}{\Gamma\left(R - \frac{i}{2\pi} b \right)}
\end{eqnarray}
for $w_0 > 1/2$ declines exponentially for $b \rightarrow \infty$ and therefore $f_{fou} \in L_2(\R) \cap L_1(\R) $ and therefore is invertible as a Fourierintegral.\\
We can now resubstitute into $s$ if $\Re(s) - \arg(s)/(2\pi) > 1/2$ and find
\begin{equation}\label{eq_intg}
\int_0^{\infty} db \cdot e^{-b\left(s + \frac{i}{2\pi}Log(s\cdot b) \right)} \frac{b^{R -1}}{\Gamma\left(R - \frac{i}{2\pi} b \right)} = \frac{1}{s^R \left(1 - \frac{1}{2\pi\cdot i \cdot s} \right)}
\end{equation}
But now we can apply lemma \ref{le_momen} to the integral on the left hand side of \eqref{eq_intg} and get a uniform asymptotic expansion around $s = \infty$ . On the right hand side of \eqref{eq_intg} the 
geometric series can be expanded to yield another uniform asymptotic expansion. Both have to be equal and therefore \eqref{eq_algebre} is true.
\end{proof}
\begin{corollary}
Equation \eqref{eq_intg} can be differentiated (multiply) in $s$ as both sides are holomorphic in $s$ for $\Re(s) - \arg(s)/(2\pi) > 1/2$ and corresponding uniform asymptotic expansions derived by the same principle from the corresponding equations.
For example
\begin{multline}
\left(- \frac{1}{1 - \frac{1}{2\pi i s}} \frac{\partial}{\partial s} \right)^p 
\int_0^{\infty} db \cdot e^{-b\left(s + \frac{i}{2\pi}Log(s\cdot b) \right)} \frac{b^{R -1}}{\Gamma\left(R - \frac{i}{2\pi} b \right)} = \\
\int_0^{\infty} db \cdot e^{-b\left(s + \frac{i}{2\pi}Log(s\cdot b) \right)} \frac{b^{R + p -1}}{\Gamma\left(R - \frac{i}{2\pi} b \right)} = 
\\
\left(- \frac{1}{1 - \frac{i}{2\pi i s}} \frac{\partial}{\partial s} \right)^p 
 \frac{1}{s^R \left(1 - \frac{1}{2\pi\cdot i \cdot s} \right)}
\end{multline}
or 
\begin{multline}
\left( - \frac{s}{1 - \frac{1}{2\pi i s}} \frac{\partial}{\partial s} \right) \cdot \left( 1 - \frac{1}{2\pi i s} \right) \frac{1}{s^R \left(1 - \frac{1}{2\pi\cdot i \cdot s} \right)} = \\
\frac{R}{s^R \left(1 - \frac{1}{2\pi\cdot i \cdot s} \right)} = \\
\int_0^{\infty} db \cdot e^{-b\left(s + \frac{i}{2\pi}Log(s\cdot b) \right)} \frac{R\cdot b^{R -1}}{\Gamma\left(R - \frac{i}{2\pi} b \right)} 
\end{multline}
\end{corollary}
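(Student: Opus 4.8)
The plan is to obtain both displayed identities by differentiating the integrand of \eqref{eq_intg} directly, once the analyticity that licenses differentiation under the integral sign has been secured; the stated corresponding uniform asymptotic expansions then follow word for word from Lemma \ref{le_momen}, exactly as in the proof of Lemma \ref{le_gam}.

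First I would fix the integrand $G(b,s) := e^{-b(s + \frac{i}{2\pi}Log(s\cdot b))}\, b^{R-1}/\Gamma(R - \frac{i}{2\pi}b)$ and note that for each real $b>0$ it is holomorphic in $s$ on the sector $|\arg(s)| < \pi/2$, where the principal branch obeys $Log(sb) = \ln(b) + Log(s)$. On any compact subset of the region $\Re(s) - \arg(s)/(2\pi) > 1/2$ in which \eqref{eq_intg} was derived, the exponential decay of type \eqref{eq_fuun} dominates both the polynomial factor $b^{R-1}$ and the $e^{b/4}$ growth of $1/\Gamma(R - \frac{i}{2\pi}b)$ coming from Stirling's formula; hence the integral converges locally uniformly and, by Morera's theorem, defines a holomorphic function of $s$ that may be differentiated under the integral sign arbitrarily often. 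This is the same analyticity already used in Lemma \ref{le_gam} when inverting the Bromwich integral to a Laplace integral.

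The computational core is the elementary identity $\frac{\partial}{\partial s}Log(sb) = 1/s$, which gives $\frac{\partial}{\partial s}G(b,s) = -b(1 + \frac{i}{2\pi s})G(b,s) = -b(1 - \frac{1}{2\pi i s})G(b,s)$. Consequently the operator $-(1 - \frac{1}{2\pi i s})^{-1}\partial_s$ multiplies the integrand by exactly $b$, and since $b$ is inert under $\partial_s$ an immediate induction shows that $p$ applications multiply it by $b^p$. Integrating yields the middle member of the first display, while applying the same operator $p$ times to the closed form $s^{-R}(1-\frac{1}{2\pi i s})^{-1}$ produces its right member; this establishes the first identity. The second display is a one-line verification: $(1-\frac{1}{2\pi i s})\,s^{-R}(1-\frac{1}{2\pi i s})^{-1} = s^{-R}$, and $-\frac{s}{1 - 1/(2\pi i s)}\,\partial_s\, s^{-R} = R\,s^{-R}(1-\frac{1}{2\pi i s})^{-1}$, which is \eqref{eq_intg} with its integrand multiplied by the constant $R$.

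It remains to read off the asymptotics. Each integral produced above has the shape $\int_0^\infty f(b)\,h(b,s)\,db$ with $f(b) = b^{R+p-1}/\Gamma(R - \frac{i}{2\pi}b)$ (or $R$ times this). Since $1/\Gamma(R - \frac{i}{2\pi}b)$ is entire in $b$, such $f$ has a convergent Taylor expansion at $b=0$ of the form required by Lemma \ref{le_momen} and inherits an exponential bound at infinity; Lemma \ref{le_momen} then delivers a uniform asymptotic expansion on each sector $|\arg(s)| < \pi/2 - \xi$, which must coincide with the elementary expansion of the holomorphic right-hand side. I expect the only real obstacle to be the uniform-convergence bookkeeping in the second paragraph: one must confirm that inserting the extra powers $b^p$ never destroys the locally uniform convergence, which reduces precisely to the fact that $e^{-b|s|\cos(\pi/2 - \xi/2)}$ in \eqref{eq_fuun} outgrows every power of $b$ together with the $e^{b/4}$ factor once $\Re(s)$ is large enough.
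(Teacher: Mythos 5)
Your proposal is correct and takes essentially the same route as the paper, which justifies the corollary exactly by differentiation under the integral sign (licensed by holomorphy of both sides of \eqref{eq_intg} on $\Re(s) - \arg(s)/(2\pi) > 1/2$), the computation that $-\bigl(1 - \tfrac{1}{2\pi i s}\bigr)^{-1}\partial_s$ multiplies the integrand by $b$, and a renewed application of lemma \ref{le_momen} to the resulting integrals. Your Morera and Stirling bookkeeping (the $e^{b/4}$ growth of $1/\Gamma(R - \tfrac{i}{2\pi}b)$ dominated by the kernel's decay) merely makes explicit what the paper leaves implicit.
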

\begin{corollary}
Using 
\begin{equation}
\frac{1}{\Gamma\left(1 - \frac{ib}{2\pi}\right)} = \frac{1 - \frac{ib}{2\pi} }{\Gamma\left(2 - \frac{ib}{2\pi}\right)} 
\end{equation}
and the previous corollary we see that lemma \eqref{le_gam} is also true for $r = 1$. 
\end{corollary}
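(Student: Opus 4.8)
The plan is to reduce the case $r=1$ to the already settled case $r=2$. The reason a reduction is needed at all is the obstruction that the integral identity \eqref{eq_intg} was only derived under $\Re(R)>1$: in the proof of Lemma \ref{le_gam} the Bromwich contour could be deformed back to a line parallel to the imaginary axis precisely because the integrand decayed at $w=\pm i\infty$, and this decay fails at $R=1$. So first I would establish \eqref{eq_intg} for $R=1$ directly, and then run the same endgame as in Lemma \ref{le_gam}: apply Lemma \ref{le_momen} to the left hand side to recover the partial sum $Pa_M(1,s)$ up to $O(s^{-(M+1)})$, expand the right hand side as a geometric series, and match orders.

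To establish \eqref{eq_intg} at $R=1$, I would use the functional equation in the form $\frac{1}{\Gamma(1-\frac{it}{2\pi})}=\frac{1}{\Gamma(2-\frac{it}{2\pi})}-\frac{i}{2\pi}\,\frac{t}{\Gamma(2-\frac{it}{2\pi})}$. Integrating against $h(t,s)$ splits the transform $F(s):=\int_0^\infty \frac{1}{\Gamma(1-\frac{it}{2\pi})}h(t,s)\,dt$ as $F(s)=F_1(s)-\frac{i}{2\pi}F_2(s)$, where $F_1(s)=\int_0^\infty \frac{1}{\Gamma(2-\frac{it}{2\pi})}h(t,s)\,dt$ and $F_2(s)=\int_0^\infty \frac{t}{\Gamma(2-\frac{it}{2\pi})}h(t,s)\,dt$. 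The second piece is exactly the left hand side of \eqref{eq_intg} at $R=2$, so $F_2(s)=\frac{1}{s^2(1-\frac{1}{2\pi i s})}$. For the first piece I would invoke the differentiation-under-$s$ mechanism of the preceding corollaries: since $\partial_s h(t,s)=-t\,(1-\frac{1}{2\pi i s})\,h(t,s)$, multiplying the integrand by $t$ is the same as applying $-\frac{1}{1-\frac{1}{2\pi i s}}\partial_s$, whence $\partial_s F_1(s)=-(1-\frac{1}{2\pi i s})F_2(s)=-s^{-2}$.

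Integrating this gives $F_1(s)=\frac{1}{s}+C$, and I would pin down $C=0$ from the boundary behaviour $F_1(s)\to 0$ as $|s|\to\infty$ inside the sector, which follows from the absolute majorant \eqref{eq_fuun}. A short simplification then yields $F(s)=\frac{1}{s}+\frac{1}{2\pi i s}\cdot\frac{1}{s(1-\frac{1}{2\pi i s})}=\frac{1}{s(1-\frac{1}{2\pi i s})}$, i.e. \eqref{eq_intg} at $R=1$. From here the argument is verbatim as in Lemma \ref{le_gam}: Lemma \ref{le_momen} applied to $F(s)$ with $\alpha=0$ and truncation order $M-1$ gives $F(s)-\sum_{p=0}^{M-1}ga_p(1)P_p(s)=O(s^{-(M+1)})$, while expanding the right hand side geometrically gives $\frac{1}{s}\sum_{p=0}^{M-1}\left(\frac{1}{2\pi i s}\right)^p+O(s^{-(M+1)})$, which is precisely Lemma \ref{le_gam} for $r=1$. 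The main obstacle I anticipate is the honest justification of the two analytic steps — differentiation under the integral sign (routine via \eqref{eq_fuun}) and the vanishing of the integration constant $C$ — together with verifying the growth hypothesis of Lemma \ref{le_momen}, i.e. that $\frac{1}{\Gamma(1-\frac{it}{2\pi})}$ grows at most like $e^{at}$ as $t\to\infty$, which it does with $a$ slightly above $1/4$.
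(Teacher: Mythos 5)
Your proposal is correct and is essentially the paper's own argument made explicit: the paper's one-line corollary likewise reduces $r=1$ to the established $R=2$ case of \eqref{eq_intg} via the functional equation $\frac{1}{\Gamma\left(1-\frac{ib}{2\pi}\right)}=\frac{1-\frac{ib}{2\pi}}{\Gamma\left(2-\frac{ib}{2\pi}\right)}$ together with the differentiation-in-$s$ mechanism of the preceding corollary, exactly as you do. The details you supply beyond the paper's sketch --- inverting that mechanism by antidifferentiation for the $F_1$ piece, fixing the integration constant $C=0$ by sectorial decay via \eqref{eq_fuun}, and verifying the $e^{at}$ growth hypothesis of lemma \ref{le_momen} with $a$ just above $1/4$ --- are precisely the steps the paper leaves implicit, and you carry them out correctly.
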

\section{Proof of Theorem 1.1}
\begin{proof} 

Let $w_{cl}$ and $w_{nr}$ denote closed and non restricted planar random walks respectively. Let $N_{2k}(w_{cl})$ and $N_{2k}(w_{nr})$ be their $2k$ multiple point range (the number of points of multiplicity $2k$).
Let us denote the following two random distributions with Borel measure on $\R$:
\begin{equation}
\beta_0(n,k):=\frac{\ln(2n)^3}{4\pi^3 \cdot (2n)}\cdot \left( N_{2k}(w_{cl}) - E_n(N_{2k}(w_{cl}) \right)
\end{equation}
for closed random walks $w_{cl}$ of length  $2n$ and
\begin{equation}
\beta_2(n,k):=\frac{\ln(n)^3}{4\pi^3 \cdot n}\cdot \left( N_{2k}(w_{nr}) - E_n(N_{2k}(w_{nr}) \right)
\end{equation}
for non restricted random walks $w_{nr}$ of length $n$.
Then the Hankel matrix of $\beta_i(n,k)$ is positive semidefinite for $i=0,2$ and all $n,k \in \N$. Furthermore 
\begin{equation}
\mu_p(i,k) := \lim_{n \to \infty} E\left(\beta_i(n,k)^{p}\right)
\end{equation}
(E(.) the expectation value) exist and are independant of $k$, i.e $\mu_p(i,k) = \mu_p(i,1)$ as shown in \citep[Theorem 8.4, Theorem 9.4]{dhoefc} and \citep[Theorem 3.5]{hamana_ann}. The Hankel matrix of these $\mu_p(i,1)$ then is positive semidefinite and therefore there exist probability distributions $\nu_i$ with Borel measure on $\R$  such that their moments are $\mu_p(i,1)$ as the corresponding Hamburger problem is solvable \cite[ch. 4]{Hamburg}.
(The possibility that the corresponding measures are not unique does not play a role in the proof. For $\nu_2$ which is proportional to the intersection local time of a Brownian motion however the uniqueness is well known and proven.) We denote the characteristic functions of the random variables $\nu_i$ 
\begin{equation}
\Phi_i(t) := E(e^{it\nu_i})
\end{equation}
for $i=0,2$ respectively. By construction $\nu_0, \nu_2$ have finite moments of any order.  Because these moments are real and finite we know that $E(\left| \nu_i \right|^j)$ is finite for any order $j$ also \cite[ch. 9.3]{loeve}. Therefore there is an asymptotic (Taylor type) expansion around $t=0$. 
\begin{equation}
\Phi_i(t) = \left(\sum_{j=0}^{k} \frac{(it)^{j}}{j!} E\left((\nu_i)^j \right) \right) + O\left( t^{(k+1)} \right)
\end{equation}
But therefore $\Phi_0(t)\cdot e^{-\frac{i \cdot \gamma \cdot t}{2\pi}}$ and $\Phi_2(t) \cdot t \cdot e^{\frac{i \cdot (1 - \gamma) \cdot t}{2\pi}}$ fulfil the conditions for a function $f(t)$ of lemma \ref{le_momen} with $a=0$. Therefore setting 
\begin{equation}
s = \frac{i}{g}
\end{equation}
we realize that the integral transforms
\begin{equation}
L_0\left(k,\frac{i}{s}\right) := \int_0^{\infty} dt \cdot \Phi_0(t) \cdot e^{-\frac{i \cdot \gamma \cdot t}{2\pi}} \cdot \vartheta_k\left(\frac{i}{s},t\right)
\end{equation}
and 
\begin{equation}
L_2\left(k,\frac{i}{s}\right) := \int_0^{\infty} dt \cdot \Phi_2(t) \cdot t \cdot e^{\frac{i \cdot (1 - \gamma) \cdot t}{2\pi}} \cdot \vartheta_k\left(\frac{i}{s},t\right)
\end{equation}
for $k=0$ according to lemma \ref{le_momen} have the uniform asymptotic expansion given by equation \eqref{eq_asyexp} in $\frac{1}{s}$.  We will now analyze this expansion. It can of course be rewritten into a uniform asymptotic expansion in $g$ around $g = 0$ for any sector
\begin{equation}
\left| \arg(g) - \frac{\pi}{2} \right| < \frac{\pi}{2} - \xi
\end{equation}
Now from \eqref{eq_tgamma0} together with \cite[eq. 8.57]{dhoefc} we find the asymptotic expansion
\begin{equation}\label{eq_phi0g}
\Phi_0(t) = e^{\frac{i \cdot \gamma \cdot t}{2\pi}} \cdot \left( 
\frac{1}{\Gamma\left(1 - \frac{it}{2\pi}\right)} + 4\pi \cdot 
\left( \sum_{r =2}^{\infty} \Gamma^{[0],pt}_{r,1} \cdot (r-1)  \cdot \frac{(-it)^r }{\Gamma\left(r - \frac{it}{2\pi}\right)}
\right)
\right)
\end{equation}
From \eqref{eq_tdelta2} and \eqref{eq_tgamma0} together with \cite[eq. 1.1]{dhoefc} we find the asymptotic expansion
\begin{multline}\label{eq_phi2g}
\Phi_2(t) = e^{\frac{i \cdot (\gamma - 1) \cdot t}{2\pi}} \cdot \Biggl[ 
\frac{1}{\Gamma\left(2 - \frac{it}{2\pi}\right)} + 
\left( 
\sum_{r =2}^{\infty} 
G^{[C],ob}_{r,0} \cdot (r + 1) \cdot 
\frac{(-it)^r }{\Gamma\left(r + 2 - \frac{it}{2\pi}\right) }
\right)  \\
 + 2 \cdot \left(
\sum_{r=2}^{\infty} 
\Gamma^{[0],pt}_{r,1} \cdot (r-1) \cdot 
\frac{(-it)^{r + 1} }{\Gamma\left(r + 2 - \frac{it}{2\pi}\right) }
\right)
\Biggr]
\end{multline}
For real $0< \xi < \frac{\pi}{2}$ denote by 
\begin{equation}
Re_{\xi} := \{s \in \C: \left|arg(s)\right| < \frac{\pi}{2} - \xi \}
\end{equation}
and 
\begin{equation}
Rh_{k} := \{s \in \C: \Re(s) - \frac{\arg(s)}{2\pi} > -k\}
\end{equation}
Then $L_0\left(k,\frac{i}{s}\right)$ and $L_2\left(k,\frac{i}{s}\right)$ are analytic on the open sets $Rh_{k}$ and for $k=0$ have uniform asymptotic expansions on $Re_{\xi}$. Using \eqref{eq_asyexp} we find
\begin{equation}
L_0\left(0,\frac{i}{s}\right) = \frac{1}{s} - \frac{i}{2\pi\cdot s^2} + O\left(\frac{1}{s^3}\right)
\end{equation}
and 
\begin{equation}
L_2\left(0,\frac{i}{s}\right) = \left(\frac{1}{s}\right)^2 + O\left(\frac{1}{s^3}\right)
\end{equation}
Therefore let us define
\begin{equation}
\eta_0\left( \frac{1}{s} \right) := \frac{1}{4\pi\cdot s } \left( LS_{s} \left(\mu(s) \cdot LS_{s} \left( \mu(s) \cdot L_0\left(0,\frac{i}{s}\right) - \frac{1}{s} \right) \right) \right)
\end{equation}
where 
\begin{equation}
\mu(s) := 1 + \frac{i}{2\pi\cdot s}
\end{equation}
and $LS_{s}$ is the local antiderivative in $s$. Then $\eta_0$ is a well defined holomorphic function on $Re_{\xi}$ and has a uniform asymptotic expansion in $\frac{1}{s}$ on $Re_{\xi}$ and taking antiderivatives can be done order by order in the asymptotic expansion in the obvious way. $\eta_0$ can be made unique by choosing integration constants such that $\eta_0\left( \frac{1}{s} \right) \sim O\left(\left( \frac{1}{s}\right)^2\right)$.
Let us for an arbitrary natural number $M > 2$ now define 
\begin{equation}
\Gamma^{[0]}_M\left( \frac{i}{s} \right) := \left(\sum_{r=2}^M \Gamma^{[0],pt}_{r,1} \cdot \left( -\frac{i}{s} \right)^r \right) 
\end{equation}
Putting equation \eqref{eq_phi0g} into equation \eqref{eq_asyexp} we then get for the uniform asymptotic expansion 
\begin{equation}
\eta_0\left( \frac{1}{s} \right) = \Gamma^{[0]}_M\left( \frac{i}{s} \right) + O\left( \left( \frac{1}{s}\right)^{M+1} \right) 
\end{equation}
Starting in equation \eqref{eq_zeta0d} we see by simple algebra
\begin{equation}
\eta_0\left(\frac{1}{s}\right) = \zeta_{0}\left(0,\frac{i}{s}\right)
\end{equation}
and so the perturbation expansion $\Gamma^{[0]}_M$ is a uniform asymptotic expansion and \eqref{eq_gamma0} is true.
Let us also define 
\begin{equation}
\eta_2\left( \frac{1}{s} \right) := (-s)\cdot LS_{s} \left( \mu(s) \cdot L_2\left(0,\frac{i}{s}\right) \right)
\end{equation}
Then $\eta_2$ is a well defined holomorphic function on $Re_{\xi}$, has a uniform asymptotic expansion in $\frac{1}{s}$ on $Re_{\xi}$ and taking the antiderivative can be done order by order in the asymptotic expansion in the obvious way. $\eta_2$ can be made unique by choosing the integration constant such that 
\begin{equation}\label{eq_eta2a}
\eta_2\left( \frac{1}{s} \right) = 1 +  O\left(\left( \frac{1}{s}\right)\right)
\end{equation}
If we denote the series 
\begin{equation}
G^{[C]}_M\left(\frac{i}{s} \right) := \sum_{r=2}^{M} G^{[C],ob}_{r,0} \cdot\left(- \frac{i}{s} \right)^r
\end{equation}
we see that putting \eqref{eq_phi2g} into equation \eqref{eq_asyexp} leads to
\begin{equation}
\eta_2\left( \frac{1}{s} \right) = G^{[C]}_M\left(\frac{i}{s} \right)  + 1 + \frac{2i}{s\cdot \mu(s)} \frac{d}{ds} \left(s \cdot \Gamma^{[0]}_M\left( \frac{i}{s} \right) \right) +  O\left( \left( \frac{1}{s}\right)^{M+1} \right) 
\end{equation}
We have used that differentiation can be exchanged with expansion into the uniform asymptotic series $\Gamma^{[0]}_M$ because $\eta_0$ is holomorphic on the open and convex set $Re_\xi$.
We define 
\begin{equation}
G^{[2]}_M\left(\frac{i}{s} \right) := \sum_{r=2}^{M} G^{[2],pt,par}_{r,c,0} \cdot \left(- \frac{i}{s} \right)^r
\end{equation}
where $G^{[2],pt,par}_{r,c,0}$ is the $0^{th}$ coefficient of the polynomial $G^{[2],pt,par}_{r,c}(\lambda)$ in $m$ for $\lambda = (1,-1,0,\ldots)$. We also define
\begin{equation}
\Gamma^{[2]}_M\left(\frac{i}{s} \right) := \sum_{r=2}^{M} \Gamma^{[2],pt}_{r,0} \cdot \left(- \frac{i}{s} \right)^r
\end{equation}
 By equations \eqref{eq_sigu} and \eqref{eq_sigmae} and \eqref{eq_g2uc} we realize
\begin{equation}
\eta_2\left( \frac{1}{s} \right) = G^{[2]}_M\left(\frac{i}{s} \right) +  O\left( \left( \frac{1}{s}\right)^{M+1} \right) 
\end{equation}
and so the perturbation expansion $G^{[2]}_M$ is a uniform asymptotic expansion.
Starting in equation \eqref{eq_zeta2d} by simple algebra
\begin{equation}
\zeta_{2}\left(0,\frac{i}{s}\right) = \frac{1}{\eta_2\left(\frac{1}{s}\right)}
\end{equation} on $Re_{\xi}$ with exception of the points discrete in $Re_{\xi}$ where $\eta_2$ is $0$. Because of equation \eqref{eq_eta2a} the uniform asymptotic expansion of $\eta_2$ can be inverted too on $Re_{\xi}$. Because of \eqref{eq_gr2ga}  therefore the standard perturbation series $\Gamma^{[2]}_M$ is also a uniform asymptotic expansion and equation \eqref{eq_gamma2} is true.  
\end{proof}

\section{The rising edge behaviour}
\begin{theorem}
If Lipatov's \citep{lipatov} asymptotic formulas for a real planar $N$ component $\phi^4$ theory  are true (see e.g. \citep[eq. 79]{Suslov_1997}), then the Borel measures of the random variables $\nu_0$ and $\nu_2$ are unique (and therefore the functions $\zeta_0$ and $\zeta_2$) and have well defined distribution functions $f_0(x)$ and $f_2(x)$ with asymptotics for $x \rightarrow -\infty$ given by
\begin{equation}\label{eq_asdist0}
f_0(x) \sim e^{-\frac{\gamma}{2 \pi A}}\cdot \frac{8\pi\cdot \xi_0}{A} \cdot 
\left( -\frac{x}{A}\right)^{\mu_0 -1} e^{\frac{x}{A}} \cdot \left( 1 + O\left( \frac{1}{x} \right) \right)
\end{equation}
\begin{equation}\label{eq_asdist2}
f_2(x) \sim e^{\frac{1 -\gamma}{2 \pi A}}\cdot \frac{I_1^2\cdot \xi_2}{A} \cdot 
\left( -\frac{x}{A}\right)^{\mu_2 -1} e^{\frac{x}{A}} \cdot \left( 1 + O\left( \frac{1}{x} \right) \right)
\end{equation}
where $\gamma$ is Eulers constant 
\begin{equation}
\mu_0 = \frac{5}{2} - \frac{1}{2\pi A}
\end{equation}
\begin{equation}
\mu_2 = \frac{3}{2} - \frac{1}{2\pi A}
\end{equation} 
and 
\begin{equation}
\xi_0 = \xi_0(N=0,d=2)
\end{equation}
and 
\begin{equation}
\xi_2 = \xi_2(N=0,d=2)
\end{equation}
using
\begin{equation}
\xi_{M}(N,d) = \frac{2^{N-1}}{(2\pi)^{\frac{N+d+1}{2}}}\cdot \left( \frac{I_6 - I_4}{d} \right)^{\frac{d}{2}}\cdot \left(\frac{4}{I_4} \right)^{\frac{M+d}{2}} \cdot D_L^{-\frac{1}{2}} \cdot D_T^{-\frac{N-1}{2}}
\end{equation}
and $A = \frac{4}{I_4}$ where the constants $I_1,I_4,I_6, D_T, D_L$ (which  depend on $d$) are all related to the well known Gagliardo-Nirenberg type inequality for functions 
$g: \R^d \rightarrow \R$
\begin{equation}
\| g \|_4 \leq C \sqrt{\|\nabla g\|_2} \cdot \sqrt{\|g\|_2}
\end{equation}
Numerical results for these constants have been given for a couple of values of $d$ including the ones we need here ($d=2$) in \citep[Table 20.3, p.389]{phi4}.
\end{theorem}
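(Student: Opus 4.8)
The plan is to extract the rising-edge asymptotics of $f_0$ and $f_2$ from the analytic structure of the characteristic functions $\Phi_0$ and $\Phi_2$ in the complex $t$-plane, a structure that is completely fixed by the large-order growth of the perturbative coefficients supplied by Lipatov \citep{lipatov}. The starting points are the representations \eqref{eq_phi0g} and \eqref{eq_phi2g} furnished by Theorem \ref{th_main}, which present each $\Phi_i(t)$ as an elementary exponential prefactor multiplying a modified Borel sum of the shape $\sum_r c^{[i]}_r\,(-it)^{k_r}/\Gamma(r + a_i - \tfrac{it}{2\pi})$, where the coefficients $c^{[i]}_r$ are built from $\Gamma^{[0],pt}_{r,1}$ and $G^{[C],ob}_{r,0}$ (equivalently from $\Gamma^{[2],pt}_{r,0}$). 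First I would insert Lipatov's large-$r$ formula, specialised to $N=0$ and $d=2$, which states that these coefficients grow like a constant times $\Gamma(r+b_i)$ against a fixed geometric rate governed by the instanton action, the latter being exactly the Gagliardo-Nirenberg infimum that enters through $A=4/I_4$ and whose fluctuation and zero-mode data are packaged into the amplitude $\xi_M(N,d)$.

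The crucial observation is that the factorial denominator $\Gamma(r - \tfrac{it}{2\pi})$ in \eqref{eq_phi0g}, \eqref{eq_phi2g} grows at the same factorial rate as $\Gamma(r+b_i)$, so the summands reduce for large $r$ to a polylogarithm-type tail $\sum_r z^{r}\,r^{\alpha_i(t)}$ with $z$ linear in $t$ and $\alpha_i(t) = \alpha_i + \tfrac{it}{2\pi}$. Such a series has a single finite branch point at $z=1$, of the universal form $\Gamma(\alpha_i(t)+1)(1-z)^{-\alpha_i(t)-1}$, so each $\Phi_i$ acquires a branch point at a purely imaginary $t_0$ whose location produces the exponential factor $e^{x/A}$ and whose exponent reproduces the powers $\mu_0-1$ and $\mu_2-1$; the offset $-\tfrac{1}{2\pi A}$ in $\mu_0=\tfrac{5}{2}-\tfrac{1}{2\pi A}$ and $\mu_2=\tfrac{3}{2}-\tfrac{1}{2\pi A}$ is precisely the value of $\tfrac{it}{2\pi}$ at $t=t_0$, while the difference $\mu_0-\mu_2=1$ reflects the extra power of $(-it)$ and the shifted $\Gamma$-denominator appearing in \eqref{eq_phi2g} relative to \eqref{eq_phi0g}. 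I would note here that the very convergence of the modified Borel sum guaranteed by this growth rate shows that each $\Phi_i$ is a genuine $L^1$ characteristic function; by the uniqueness theorem for characteristic functions this fixes the Borel measure of $\nu_i$ (and hence $\zeta^{[0]},\zeta^{[2]}$) uniquely, which for $\nu_2$ is anyway classical.

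The last step converts this singularity data into the edge behaviour through the Fourier inversion $f_i(x)=\tfrac{1}{2\pi}\int_{-\infty}^{\infty} e^{-ixt}\Phi_i(t)\,dt$. For $x\to-\infty$ the contour can be pushed into the half-plane $\Im(t)>0$ in which $e^{-ixt}$ decays, so that the leading contribution comes solely from the nearest singularity $t_0$; a Watson/Darboux expansion about the branch point then yields the announced leading term $\bigl(-x/A\bigr)^{\mu_i-1}e^{x/A}$, the remaining analytic part contributing the $O(1/x)$ correction. The global constants $e^{-\gamma/(2\pi A)}$, $\tfrac{8\pi\xi_0}{A}$, $e^{(1-\gamma)/(2\pi A)}$ and $\tfrac{I_1^2\xi_2}{A}$ emerge by collecting the exponential prefactors $e^{\pm i\gamma t/2\pi}$ and $e^{\pm it/2\pi}$ of \eqref{eq_phi0g}, \eqref{eq_phi2g} evaluated at $t=t_0$, together with the Lipatov amplitude $\xi_M(N,d)$ and the $\Gamma$-factor thrown off by the Darboux step.

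The step I expect to be the main obstacle is the rigorous interchange of Lipatov's purely asymptotic (large-$r$) information with both the resummation and the contour deformation. One must bound uniformly the effect of the subleading corrections to the coefficients and of the finitely many low-order, non-asymptotic terms, so as to justify moving the inversion contour past $t_0$ and to certify that the branch-point contribution genuinely dominates the remainder rather than merely formally. In practice this calls for a Tauberian-type remainder estimate for the tail of the modified Borel series near $t_0$, controlling the error uniformly along the deformed contour, instead of a naive term-by-term passage to the limit -- and it is here that the assumption ``if Lipatov's formulas are true'' does the real work.
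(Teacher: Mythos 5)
Your overall strategy coincides with the paper's: insert the Lipatov/Suslov large-order formulas into the Borel-type representations \eqref{eq_phi0g} and \eqref{eq_phi2g}, locate the resulting singularity of $\Phi_i$ at $t_0 = i/A$, read off the exponent $\mu_i$ from the value of $-\frac{it}{2\pi}$ there, and transfer to the density. Where you diverge is in the finishing technique. The paper sums the leading Lipatov tail \emph{exactly} into the closed hypergeometric forms \eqref{eq_phi0as} and \eqref{eq_phi2as}, and then obtains the edge behaviour from the standard $z \to 1$ connection formulas for ${}_2F_1$ \citep[eq. 9.122, 9.131]{rg} together with the exact correspondence between the singular factor $(1+iAt)^{-\mu_i}$ and the characteristic function of a (reflected) Gamma distribution. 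This buys the leading term without any contour deformation or Darboux analysis: the singular part is inverted in closed form, and the smoother remainder supplies the relative $O(1/x)$. Your route --- polylogarithm-type singularity analysis of $\sum_r z^r r^{\alpha_i(t)}$ followed by pushing the inversion contour into $\Im(t) > 0$ and a Watson/Darboux expansion at the branch point --- is equivalent in spirit but, as you yourself flag, requires uniform remainder control along the deformed contour, precisely the step the paper's closed-form device avoids for the dominant term. So the obstacle you identify as ``the main difficulty'' is real in your formulation but is circumvented, not solved head-on, in the paper.

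Two concrete corrections. First, your uniqueness argument is wrong as stated: absolute convergence of the series \eqref{eq_phi0g}, \eqref{eq_phi2g} for $|t| < I_4/4$ says nothing about $\Phi_i \in L^1(\R)$, and in any case the uniqueness theorem for characteristic functions (measure $\leftrightarrow$ characteristic function) is not the relevant statement here --- the issue is determinacy of the Hamburger \emph{moment} problem, since $\nu_i$ was constructed only from its moments. The paper's argument is that the Lipatov growth $|\mu_p| \lesssim C\, p!\, A^p\, p^{b}$ makes the characteristic function of any solution analytic in a disc of radius $I_4/4$ about $t=0$, which forces determinacy; your observation about the convergence radius contains the needed fact, but you must invoke analyticity-at-zero (or Carleman), not $L^1$-uniqueness. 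Second, you write that you specialise Lipatov's formulas ``to $N=0$'': done naively in \eqref{eq_ga0ras}--\eqref{eq_ga2ras} this yields $0$ because of the explicit factor $2m$ (with $N = 2m$ after the real-to-complex field rewriting $\Phi_{\C,j} = \frac{1}{\sqrt 2}(\Phi_{\R,2j-1} + i\,\Phi_{\R,2j})$ of \eqref{eq_rlagra}). What the theorem actually requires is the coefficient of $m^{1}$ in $\Gamma^{[0],pt}_r$ and of $m^{0}$ in $\Gamma^{[2],pt}_r$, extracted from the $m$-dependence of Suslov's formulas; this bookkeeping is also where the prefactors $8\pi\,\xi_0$ and $I_1^2\,\xi_2$ acquire their precise normalisation, so it cannot be waved through.
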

\begin{proof}
We first note that an $N$-component real $\phi^4$ theory with $N$ components and a Lagrangian 
\begin{equation}\label{eq_rlagra}
\mathcal{L}_{\R}(x) = \frac{1}{2}
\partial_{\mu}\Phi(x)^{t}\partial^{\mu}\Phi(x) + \frac{1}{2}\lambda_1 \cdot \Phi(x){t}\Phi(x) + \frac{1}{4}\lambda_2 \cdot \left(\Phi^{t}(x)\Phi(x)\right)^{2} 
\end{equation}
(as used in \cite[eq.1]{Suslov_1997}) can for an even $N=2m$ number of real components be transformed into the Lagrangian \eqref{eq_lagra1} with $m$ complex components by defining
\begin{equation}
\Phi_{\C,j} := \frac{1}{\sqrt{2}}\left(\Phi_{\R,2j - 1} + i \cdot \Phi_{\R,2j}\right) 
\end{equation}
Therefore, taking into account the minus sign before the the coupling constant in \eqref{eq_g0} and \eqref{eq_g1} and using \cite[eq. 79]{Suslov_1997}
\begin{equation}\label{eq_ga0ras}
\Gamma_{r}^{[0],pt}(1,1) \sim \xi_0(2m,2) \cdot \Gamma\left(r + m + \frac{1}{2} \right)   \cdot \left( \frac{4}{I_4} \right)^r \cdot  \frac{2m\cdot \pi^{\frac{2m}{2}}}{\Gamma\left( 1 + \frac{2m}{2}\right)} \left(1 + O\left( \frac{1}{r} \right)\right)
\end{equation}
and 
\begin{multline}\label{eq_ga2ras}
\Gamma_{r}^{[2],pt} (1,1) \sim \\ 
\xi_2(2m,2)\cdot I_1^2 \cdot \Gamma\left(r + m + \frac{3}{2} \right)   \cdot \left( \frac{4}{I_4} \right)^r \cdot  \frac{2m\cdot \pi^{\frac{2m}{2}}}{\Gamma\left( 1 + \frac{2m}{2}\right)} \frac{1}{2m}\left(1 + O\left( \frac{1}{r} \right)\right)
\end{multline}
Using \eqref{eq_ga0ras} and \eqref{eq_ga2ras} in equations \eqref{eq_phi0g} and \eqref{eq_phi2g} we see that the right hand side of these eqations is an absolutely converging series for complex $\left| t \right| < \frac{I_4}{4}$  around the point $t=0$. It therefore represents a holomorphic function in $t$ and therefore also has a power series in $t$ with radius of convergence equal to $\frac{I_4}{4}$. Therefore the corresponding distributions $\nu_0$ and $\nu_2$ are unique and the distribution functions $f_0(x)$ and $f_2(x)$ exist and are Fourier transforms of the functions $\Phi_i$. Summing the leading contribution we find 
\begin{equation}\label{eq_phi0as}
\Phi^{as}_0(t) = 8\pi\cdot \xi_0\frac{\Gamma\left(\frac{3}{2} \right)}{ \Gamma\left(-\frac{it}{2\pi} \right)} e^{\frac{\gamma i t}{2\pi}} \cdot _2F_1\left(1,\frac{3}{2};-\frac{it}{2\pi}; -\frac{4it}{I_4}\right)
\end{equation}  
and
\begin{equation}\label{eq_phi2as}
\Phi^{as}_2(t) =  \xi_2\cdot I_1^2\cdot
 \frac{\Gamma\left(\frac{5}{2} \right)}{ \Gamma\left(2-\frac{it}{2\pi} \right)} e^{\frac{(\gamma - 1) \cdot i t}{2\pi}} \cdot _2F_1\left(1,\frac{5}{2}; 2-\frac{it}{2\pi}; -\frac{4it}{I_4}\right)
\end{equation}   
and equations \eqref{eq_asdist0} and \eqref{eq_asdist2} now follow from standard features of the hypergeometric function $_2F_1$ especially \cite[eq. 9.122, 9.131]{rg} and known features of the Gamma distribution. We have used that numerically 
\begin{equation}
\frac{1}{2\pi A} \approx 0.933112776025 < \frac{3}{2}
\end{equation}
and therefore $\mu_0 >0 $ and $\mu_2 > 0$ which is a condition for the use of \cite[eq. 9.122]{rg}.
This behaviour is a refinement of the one proven for $\nu_2$ in \cite{bass_chen}, noting that the intersection local time is $-\nu_2$ up to  a  known constant of proportionality. 
\end{proof}
For $\nu_2$ the falling edge behaviour has been proven to be double exponential in \cite{bass_chen} too. Equations \eqref{eq_phi0as} and \eqref{eq_phi2as} on the other hand do not give us sufficient information about the falling edge behaviour of $\nu_0$ and $\nu_2$ as they are not good approximations outside the circle of convergence. 
\section{Conclusion}
In this paper we have defined counter clockwise continous branches of holomorphic/meromorphic functions $\zeta_0$ and $\zeta_2$ on a Riemann surface with infinitely many sheets as simple antiderivatives of
integral transforms of the characteristic function of certain mathematically well defined probability distributions $\nu_0$ and $\nu_2$ related to multiple points of random walks. One branch of each of these functions respectively has a uniform asymptotic expansion in the origin. They are identical to the standard perturbation expansion of the proper functions $\Gamma^{[0],pt}$ (the free energy) and $\Gamma^{[2],pt}$ (the proper two point function) of the planar $\phi^4$ theory with $m$ complex components for $m=0$ (a precise definition of this case is provided).  In this way these proper functions, which are the building blocks of quantum field theories have been well defined mathematically beyond perturbation theory and shown to have their proper mathematical understanding in the realm of random walks. The somewhat ad hoc dismissal of Feynman graphs which contain tadpoles comes out naturally in this concept. The relationship to random walks also provides a natural summation procedure for the perturbation series which turns out to be a modified Borel summation. The kernel of the integral transform $\vartheta_{0}(g,b)$ (see equation \eqref{eq_dtheta}) of the Borel summation is closely related to the multiple return features of the random walk to the same point, i.e. to graphs with tadpoles/dams \citep[section 7.3]{dhoefc} (compare \citep[equation 9.72]{dhoefc} and equation \eqref{eq_inttr}) on this random walk level.\\ 
As quantum field theory thus can be related to characteristic functions we are also connected to the wealth of knowledge about probability distributions.  Standard perturbation theory in our case defines the moments of the corresponding probability distribution. If this distribution is defined uniquely by its moments then perturbation theory also uniquely defines the proper functions beyond perturbation theory.
So this fundamental question of quantum field theory is reduced to the uniqueness part of a Hamburger problem (see \cite{Hamburg}) which in the case of $\nu_2$ and therefore $\zeta_2$ has long been proven mathematically. If Lipatov asymptotics is correct then this question has a positive answer also for $\nu_0$ and therefore $\zeta_0$. Using Lipatov asymptotics on the other hand the rising edge behaviour of the distribution for large lenght of multiple points of planar random walks has been calculated too to be of the type of a Gamma distribution. The falling edge behaviour should be calculateable too, a future paper will be dedicated to it. It would make an almost quantitative evaluation of the distribution functions possible as the first few moments are also known. This would in turn probably make an almost quantitative evaluation of the proper functions possible by the integral transforms.\\
This paper gives the hope of generally defining proper functions of quantum field theories mathematically sound in terms of probability distributions of random geometrical objects. In this article we already have given the framework 
to extend this work to general $m \neq 0$ and $d=2$ and to some extend to general \lq\lq external momenta\rq\rq\  $P \neq 0$ and also higher order proper functions $\Gamma^{[2p]}$ with $p > 1$. What needs to be worked out is the distribution of the joint multiple points of multiple random walks, so called loop soups along the lines of \citep{dhoefc}, but doing this seems straightforward. Specifically theorem \ref{th_weifp} already contains a reformulation of the weight factors of standard perturbation theory of the corresponding proper functions in terms of multiple paths on graphs which is crucial for the connection to random walks. For higher dimensions $d \geq 3$ the procedure 
has to be amended by further ideas related to what physicists call renormalization. But it seems that quantum field theory can be tamed mathematically by methods similar to the ones in this paper. 
\bibliographystyle{imsart-number}
\bibliography{hoef_phi4}
\end{document}